\documentclass[12pt]{amsart}

\usepackage[utf8]{inputenc}
\usepackage{amstext,amsthm,amsmath}

\usepackage{latexsym}
\usepackage{amsfonts}
\usepackage{mathtools}
\usepackage{graphicx}
\usepackage[normalem]{ulem}
\usepackage{fullpage}
\usepackage{marvosym}
\usepackage[usenames,dvipsnames]{color}
\usepackage{verbatim}
\usepackage[hidelinks]{hyperref}
\usepackage[english]{babel}
\usepackage{tabularx}
\usepackage{tikz}
\usepackage{enumerate}

\usepackage{bigints}

\newtheorem*{theorem*}{Theorem}
\newtheorem*{lemma*}{Lemma}
\newtheorem*{conjecture*}{Conjecture}
\newtheorem*{question*}{Question}
\newtheorem*{problem*}{Problem}
\newtheorem*{proposition*}{Question}
\newtheorem{theorem}{Theorem}[section]
\newtheorem{definition}[theorem]{Definition}
\newtheorem{lemma}[theorem]{Lemma}
\newtheorem{proposition}[theorem]{Proposition}

\theoremstyle{definition}\newtheorem{remark}[theorem]{Remark}
\theoremstyle{definition}

\numberwithin{equation}{section}

\allowdisplaybreaks

\newcommand\CC{{\mathbb C}}
\newcommand\RR{{\mathbb R}}
\newcommand\DD{{\mathbb D}}

\newcommand\DDD{{\mathcal D}}
\newcommand\BB{{\mathbb B_n}}
\newcommand\SSS{{\mathbb S_n}}

\newcommand\Ball{{\mathbb B_2}}

\newcommand\po{{\mathtt{p}}}
\newcommand\Ba{\mathrm{B}}
\newcommand\Pol{\mathrm{P}}

\title[boundary zeros]{Boundary zeros of stable polynomials in the unit ball}
\author[D. Vavitsas]{$^\star$Dimitrios Vavitsas}
\thanks{$^\star$Corresponding author}
\address{School of Mathematics (Zhuhai), Sun Yat-Sen University, Zhuhai, Guangdong, 519082, P. R.
China}
\email{vavitsas@mail.sysu.edu.cn}
\author[J. Wu]{Jujie Wu}
\address{School of Mathematics (Zhuhai), Sun Yat-Sen University, Zhuhai, Guangdong, 519082, P. R.
China}
\email{wujj86@mail.sysu.edu.cn}
\author[K. Zarvalis]{Konstantinos Zarvalis}
\address{Department of Mathematics, Aristotle University of Thessaloniki, 54124, Thessaloniki, Greece}
\email{zarkonath@math.auth.gr}

\subjclass[2020]{Primary: 32A08, 47A16, 47B32; Secondary: 32A60, 32T40, 14P20}
\keywords{Dirichlet-type spaces, cyclic vectors, stable polynomials, unit ball, zero set}

\begin{document}

\begin{abstract}
We study polynomials in several complex variables that are stable (i.e. they have no zeros in the unit ball $\BB$), but vanish on the unit sphere along submanifolds of dimension at most one. We characterize the zeros of such polynomials on the unit sphere in terms of peak sets for the space $A^\infty(\BB).$ Furthermore, we explicitly construct a polynomial in $\CC^3$ that provides a negative answer to the question of whether the equivalent conditions of this characterization universally hold for all stable polynomials.  As an application of the developed theory, we obtain a characterization of a certain class of cyclic polynomials in the Dirichlet-type space $D_{n-\frac{1}{2}}(\BB).$  Next, we examine polynomials whose local zero set, in a neighborhood of points on the unit sphere, coincides with the graph of a holomorphic function. In this particular case, we achieve a characterization of the set of zeros lying on the unit sphere whenever the zero set has local maximum dimension $n-1$. Lastly, we discuss potential generalizations and open problems.
\end{abstract}

\maketitle
\section{Introduction}

Polynomials that are zero-free in domains of $\CC^n,$ $n\in \mathbb N,$ are often called stable and play an important role in function theory and in spaces of holomorphic functions of several complex variables. In particular, the theory of cyclic polynomials in Dirichlet-type spaces on the unit disk $\DD$, the unit ball $\BB$, and the unit polydisk $\DD^n,$ indicates that the size of the boundary zeros---namely, the set of zeros lying on the Shilov boundary of the domain---controls the cyclicity. It is therefore of fundamental importance to obtain explicit descriptions of this set.

Dirichlet-type spaces form a family $(\DDD_\alpha(\BB),||\cdot||_\alpha)$ that depends on a real parameter $\alpha\in \RR$ and consists of Hilbert spaces of holomorphic functions. An $\alpha$-Dirichlet-type space is not always an algebra, hence cyclic vectors play a role similar to that of invertible elements in algebras. Classical Hilbert spaces such as the Bergman, the Hardy, the Drury--Arveson and the Dirichlet space, are special instances of this family. Therefore, studying the aforementioned family in its full generality allows us to draw conclusions for these classical spaces.  

A natural question arising from the theory developed during the past decades is whether it is possible to cover the boundary zeros of a stable in the unit ball polynomial by suitable compact interpolation sets in the ball setting, where the corresponding interpolation functions are cyclic for appropriate parameter values, thereby leading to a characterization of cyclicity for polynomials for $\alpha \in \mathbb{R};$ see \cite{VConcrete}.

To formalize the problem, we start with a convention. From now on, a polynomial will be called \textit{stable} if it is zero-free specifically in the unit ball. Let $p\in \CC[z_1,...,z_n]$ be such a polynomial and denote by $\mathcal{Z}(p)$ its zero-set and by $\SSS$ the unit sphere in $n$ complex variables. Suppose that $\mathcal{Z}(p)\cap \SSS=\cup_iM_i$ is a finite partition of the boundary zeros, where each $M_i$ is a complex tangential submanifold in $\SSS$ and equivalent to a hypercube $(-1,1)^m$, $m=0,1,...,n-1$, via a Nash diffeomorphism; see \cite{VZ} for this description. We seek conditions under which there exist finitely or countably many interpolation pairs $\{(K^i_j,f^i_j)\}_j$ such that $\overline{M_i}\subset \cup_jK^i_j$, $\dim_\RR K^i_j\leq \dim_\RR M_i$, for all $i,j$, each $K_j^i$ is a peak set for $A^\infty(\BB)$, and the corresponding peak functions $f_j^i$ are cyclic in $\DDD_\alpha(\BB)$, where  $\alpha=\frac{2n-\mathrm{dim}_\RR M_i}{2}$. Recall that $A^\infty(\BB)$ stands for the space of holomorphic in the unit ball functions with partial derivatives that extend continuously to the unit sphere for all orders. This covering idea follows directly from \cite[Theorem 3.6]{Perfekt}, \cite[Theorem 1]{VZ}, and \cite[Lemma 25]{Ziarati}. Thorough definitions of all the above notions appear in Section \ref{sec:prelim}.

While we show that this covering approach can be successfully realized in specific scenarios, we also identify problematic cases by providing an explicit polynomial for the setting $n=3$. The theory developed throughout this paper naturally leads to the introduction of the notions of Nash stratification, essential singularities, and weak essential singularities of the one dimensional set $\mathcal{Z}(p)\cap \SSS$ in the context of interpolation theory; detailed information and precise definitions follow in Section~\ref{sec: Main results}. Roughly speaking, essential and weak essential singularities resemble the boundary point of a geometric cusp component, with the difference that the latter cannot always be detected via interpolation theory because the component can lie on a proper submanifold. For $n=2,$ the two definitions coincide due to the dimension of $\dim_\RR\mathcal{Z}(p)\cap \mathbb S_2$ of a stable polynomial. Whether a problematic case can actually occur in the setting $n=2$ remains an open question, which we explicitly pose in the final section. 

We now outline the primary results that will be stated and proved throughout the course of this manuscript. First of all, we provide a characterization of $\mathcal{Z}(p)\cap \SSS$ with respect to interpolation sets of the ball whenever $\dim_\RR \mathcal{Z}(p)\cap \SSS=1:$ considering a Nash stratification $\mathcal{Z}(p)\cap \SSS=\cup_iM_i,$ then the set $\mathcal{Z}(p)\cap \SSS$  has no essential singularities if and only if each $\overline{M_i}$ is a peak set for $A^\infty(\BB)$ as shown in Theorem~\ref{Character}. In particular, if $\mathcal{Z}(p)\cap \SSS$ has no weak essential singularities, then there exist finitely many peak sets $K_j$ for $A^\infty(\BB)$ such that $\mathcal{Z}(p)\cap \SSS=\cup_jK_j$. Each $K_j$ is contained in an one dimensional complex tangential $C^\infty$ submanifold in $\SSS$; see Theorem~\ref{theoremm}.

In light of Theorem~\ref{theoremm}, in Theorem~\ref{main2} we are able to characterize cyclic polynomials without weak essential singularities in $\DDD_{n-\frac{1}{2}}(\BB)$. 

On the other hand, we explicitly construct a polynomial $\pi$ in $\CC^3$ such that $\dim_\RR\mathcal{Z}(\pi)\cap \mathbb S_3=1$ and the set $\mathcal{Z}(\pi)\cap \mathbb S_3$ contains a weak essential singularity which is not essential. This polynomial shows the boundaries of this framework with respect to the problem of cyclicity for polynomials in the setting $n\geq 3$. More specifically, a straightforward application of Theorems \ref{theoremm} and \ref{main2} seems inherently insufficient for the characterization of cyclicity for all stable polynomials $p$ with $\dim_\RR\mathcal{Z}(p)\cap \SSS=1$.

Next, we study polynomials whose local zero set, in a neighborhood of points of the unit sphere, coincides with the graph of a holomorphic function. We show that the closure of a Nash component of the boundary zero set is contained locally in real analytic, totally real submanifolds lying in $\SSS$; see Proposition~\ref{prop graph}. Furthermore, if the Nash component is of maximum real dimension $n-1$, then its closure is a peak set for $A^\infty(\BB)$ as proved in Proposition~\ref{prop}. If additionally, the boundary zeros have maximum local dimension $n-1$,  then  Proposition~\ref{prop} becomes a characterization. In particular, for the setting $n=2$, we characterize the boundary zeros, that is, the boundary zeros of a stable polynomial whose zero set is $1$-dimensional and locally the graph of a holomorphic function, admit no weak essential singularities. This result is containd in Theorem~\ref{n=2 characterization zero set graph}; see also \cite{Knese} for a different approach. Moreover, we provide a characterization of cyclicity for a specific class of these polynomials in the setting $n>2$; see Theorem~\ref{cyclicity graph}. We also refer to \cite{KV} for the characterization of any stable polynomial in the setting $n=2$.

Finally, we deal with certain cases concerning the geometric nature of the Nash components $M_i$ of $\mathcal{Z}(p)\cap \SSS =\cup_iM_i.$ We provide a construction of a covering $\{K_j\}_j$ of $M_i$ that forms an increasing sequence of semi-algebraically connected compact peak sets for $A^\infty(\BB),$ and each $K_j$ is contained in two distinct $C^\infty$ complex tangential submanifolds, each of the minimal possible dimension. This description is the content of Theorem~\ref{path conn peak sets}.

As is evident from all the above exposition, in brief, the aim of this work is to study the geometric structure of the boundary zeros with respect to interpolation sets for the unit ball, motivated by the cyclicity problem for polynomials in Dirichlet-type spaces. The proofs, all of which are presented in Section \ref{sec:proofs}, rely on techniques stemming from semi-algebraic geometry and interpolation theory on the unit ball. These include foundational work in semi-algebraic geometry by F. Bruhat, H. Cartan, J. Milnor, R. Robson  (\cite{Cartan, Milnor, Robson}) as well as developments in interpolation theory by J. Bruna J. Chaumat, A.M. Chollet, J.E. Fornaess, F.R. Harvey, B.S. Henriksen,  J.M. Ortega, and R.O. Wells (\cite{Bruna,Chollet,Chaumat,Fornaess,HW}) together with more recent results concerning the geometric nature of the boundary zeros by K. Bickel, G. Knese, J.E. Pascoe, and A. Sola (\cite{Bickel,Sola1,Knese}).  

Before proceeding to the core of the article, we lay out the various studies appearing in the literature that primarily motivated this present work. The characterization of cyclic polynomials in Dirichlet-type spaces on the bidisk was established in \cite{Beneteau} by C. B\'{e}n\'{e}teau, G. Knese, \L. Kosi\'{n}ski, C. Liaw, D. Seco and A. Sola. The anisotropic counterpart was subsequently characterized by G. Knese, \L. Kosi\'{n}ski,  T. Ransford and A. Sola in \cite{KneseK}. Boundary zeros of polynomials on the polydisk were investigated in \cite{Sola1} by K. Bickel, J.E. Pascoe and A. Sola. Cyclic polynomials in Dirichlet-type spaces on the unit ball of $\CC^2$ were characterized in \cite{KV} by Ł. Kosiński et al., while necessary and sufficient conditions for cyclicity in Dirichlet-type spaces on the unit ball of $\CC^n$ were established in \cite{Perfekt} by A. Aleman, K. Perfekt, S. Richter, C. Sundberg and J. Sunkes. An explicit description of boundary zeros of stable polynomials, together with a conjecture on the characterization of cyclic polynomials in Dirichlet-type spaces on the unit ball of $\CC^n$ was given in \cite{VZ}. More recently, a study of zeros of holomorphic functions by P. Kumar and J. Sampat occurs in \cite{Jeet}, while the description of boundary zeros by G. Knese, J.E. Pascoe, and A. Sola appears in \cite{Knese}. Along with the study of cyclicity for holomorphic functions in neighborhoods of the unit ball in \cite{Ziarati} by P.T. Ziarati, they collectively provide a fertile basis for further study of the boundary zeros. As a matter of fact, a series of questions arising from the aforementioned papers in conjuction with the work developed in the present article, is presented in Section \ref{sec:further}.

\section{Preliminaries}\label{sec:prelim}
We start by presenting the notation that will be employed throughout the course of this work. For the rest of this section, we fix some $n\in\mathbb{N}$. For points $z=(z_1,\dots,z_n)$ and $w=(w_1,\dots,w_n)$ in $\CC^n$, we denote their \textit{Euclidean inner product} by $\langle z,w\rangle=z_1\overline{w}_1+\dots+z_n\overline{w}_n$ and its associated \textit{Euclidean norm} by $||z||=\sqrt{|z_1|^2+\dots+|z_n|^2}$. 

We write $\BB\vcentcolon=\{z\in\CC^n:||z||<1\}$ for the \textit{unit ball} with topological boundary the \textit{unit sphere} $\SSS\vcentcolon=\{z\in\CC^n:||z||=1\}$. Moreover, for the ball and the polydisk with center $x\in \CC^k$ and radius $\delta>0$, we use the notations $\Pol(x,\delta)$ and $\Ba(x,\delta)$, respectively.

On another note, for the zero set of a function $f\in \CC[z_1,...,z_n]$ we use the notation $\mathcal{Z}(f)\vcentcolon=\{z\in \CC^n:f(z)=0\}$. In particular, a polynomial $p\in \CC[z_1,...,z_n]$ is said to be \emph{stable (with respect to the unit ball)} if $p$ is zero-free in the unit ball, that is, $\mathcal{Z}(p)\cap \BB=\emptyset$.  The zeros of a stable polynomial that lie on the unit sphere are called \emph{boundary zeros}, and the corresponding set is written as $\mathcal{Z}(p)\cap \SSS$.

Moving on to function spaces, we denote the space of holomorphic in the unit ball functions by $\textup{Hol}(\BB)$. The space $A^k(\BB)$ consists of all holomorphic functions on the unit ball whose partial derivatives up to order $k\in \mathbb N$ extend continuously to the closure $\overline{\BB}$. In case $k=0$, we write $A(\BB)\vcentcolon=A^0(\BB)$ for the ball algebra. Furthermore, the space $A^\infty(\BB)$ consists of all holomorphic functions on the unit ball whose partial derivatives of all orders extend continuously to the closure of the unit ball. Finally, we denote by $\mathcal{O}(\overline{\BB})$ the class of all functions holomorphic in a neighborhood of $\overline{\BB}$. 

Let $f\in \mathrm{Hol}(\BB)$. The geometry of the unit ball allows the function $f$ to be represented by a power series expansion of the form
\begin{equation}\label{eq:power series}
f(z)=\sum\limits_{|k|=0}^{+\infty}a_kz^k, \quad\quad z=(z_1,\dots,z_n)\in\BB,    
\end{equation}
where $k=(k_1,\dots,k_n)$ is a multi-index of non-negative integers, $|k|=k_1+\dots+k_n$ and $z^k=z_1^{k_1}\cdots z_n^{k_n}$. We refer to \cite{Horm} and \cite{Rudin} for a detailed background on holomorphic functions on the unit ball.

\subsection{Dirichlet-type spaces}
Fix a real parameter $\alpha\in\RR$ and suppose that $f\in\mathrm{Hol}(\BB)$ has a power series expansion as in \eqref{eq:power series}. We say that $f$ belongs to the \textit{Dirichlet-type space} $\DDD_\alpha(\BB)$ provided
\begin{equation*}
  ||f||^2_\alpha\vcentcolon=\sum\limits_{|k|=0}^{+\infty}(n+|k|)^\alpha\frac{(n-1)!k!}{(n-1+|k|)!}|a_k|^2<+\infty,  
\end{equation*}
where $k!=k_1!\cdots k_n!$. The family of Dirichlet-type spaces depends on a real parameter and consists of Hilbert spaces of holomorphic functions. The $\alpha$-Dirichlet-type norm is determined by the Taylor coefficients of $f$ together with weights depending on the real parameter $\alpha$. Special cases of this family include classical Hilbert spaces of holomorphic functions on the unit ball of $\CC^n$. More specifically, the spaces  $\DDD_{-1}(\BB), \DDD_0(\BB), \DDD_{n-1}(\BB)$ and $\DDD_n(\BB)$ coincide with the well known Bergman, Hardy, Drury-Arveson and Dirichlet spaces, respectively. For further information on these spaces, we refer the interested reader to \cite{Perfekt, Brown-Shields, Primer, Hartz, Zhu}.

Fundamental properties of Dirichlet-type spaces include the following: the density of polynomials in each space $\DDD_\alpha(\BB)$, $\alpha\in \RR$; for each $f\in \DDD_\alpha(\BB),$ the functions also $z_i\cdot f$ belong to $\DDD_\alpha(\BB),$ $i = 1, ..., n$;  the \textit{point evaluation functional} with respect to a fixed point $z\in \BB$ defined by $x^*_z\vcentcolon\DDD_\alpha(\BB)\rightarrow \CC$, with $x^*_z(f)\vcentcolon=f(z)$, is a continuous linear functional.

\subsection{Cyclic vectors}
Consider the \textit{shift operator} $T_i\vcentcolon\DDD_\alpha(\BB)\to \DDD_\alpha(\BB)$, $i=1,..,n$, defined by $T_i(f)(z)\vcentcolon= z_i\cdot f(z)$. A function $f\in \DDD_\alpha(\BB)$ is called a \textit{cyclic vector} (or \emph{cyclic function}) for $\DDD_\alpha(\BB)$ if the closed invariant subspace
\begin{equation}\label{cyclic def}
[f]_\alpha:=\textrm{clos span}\{z_1^{k_1}\cdots z_n^{k_n}f:k_1,\dots,k_n=0,1,2,\dots\}
\end{equation}
coincides with the whole space $\DDD_\alpha(\BB).$ The closure in \eqref{cyclic def} is taken with respect to the norm of $\DDD_\alpha(\BB).$ Equivalently, $f\in \DDD_\alpha(\BB)$ is cyclic if and only if there exists a sequence of polynomials $p_n\in \CC[z_1,...,z_n]$ such that $p_nf\rightarrow 1$ in the $\alpha$-Dirichlet-type norm. Trivial examples of cyclic functions are the non-zero constant functions.

Cyclic functions for $\DDD_\alpha(\BB)$ must be zero-free in the unit ball. This is a consequence of the continuity of the point evaluation functionals mentioned previously. Therefore, the study of cyclic vectors for Dirichlet-type spaces focuses on functions whose zeros lie outside the unit ball. In particular, the cyclicity of a polynomial is closely related to the size of its boundary zeros. As we shall see in the sequel, the study of this boundary zero set in the ball setting requires techniques from semi-algebraic geometry and interpolation theory. See \cite{Perfekt, Beneteau, Brown-Shields, Primer, Hartz,  Knese, KV, Mironov, Zhu} and the references within them for a background on cyclicity.

\subsection{Semi-algebraic geometry}
We proceed to a rundown of notions borrowed from semi-algebraic geometry and deemed necessary for the purposes of this article. Fix $N\in\mathbb{N}$. A set $A\subset\RR^N$ is said to be \textit{semi-algebraic} provided there exists a finite number of real polynomials $f_{i,j}$,  $i=1,\dots,s$, $j=1,\dots,r_i$, such that
\begin{equation*}
 A=\bigcup\limits_{i=1}^{s}\bigcap\limits_{j=1}^{r_i}\{x\in \RR^N:f_{i,j}(x) *_{i,j} 0\},   
\end{equation*}
where $*_{i,j}$ is either $<$ or $0$.

To continue with, we are in need of certain concepts with regard to Nash submanifolds and Nash diffeomorphisms; see \cite[Chapter 2, Subsection 9.7]{Algebraic} for a concise presentation.
\begin{enumerate}
    \item[(i)] Let $A\subset\RR^M$ and $B\subset\RR^N$ be two semi-algebraic sets. A mapping $f\vcentcolon A\to B$ is characterized as \textit{semi-algebraic} if its graph is a semi-algebraic set of $\RR^{M+N}$.
    \item[(ii)] Let $A\subset\RR^N$ be semi-algebraic. A semi-algebraic mapping $f\vcentcolon A\to\RR$ of class $C^\infty$ is called a \textit{Nash function}. Moreover, given two semi-algebraic sets $A,B\subset\RR^N$, a semi-algebraic bijection $f\vcentcolon A\to B$ of class $C^\infty$ is called a \textit{Nash diffeomorphism}.
    \item[(iii)] A semi-algebraic subset $M$ of $\RR^N$ is said to be a \textit{Nash submanifold} of $\RR^N$ of dimension $d$ if for every point $x\in M$, there exists a Nash diffeomorphism $\phi$ from an open semi-algebraic neighborhood $\Omega$ of the origin in $\RR^N$ onto an open semi-algebraic neighborhood $\Omega'$ of $x$ in $\RR^N$ such that $\phi(0)=x$ and $\phi((\RR^d\times\{0\})\cap\Omega)=M\cap\Omega'$.
    \item[(iv)] Two Nash submanifolds $M$ and $N$ are considered to be \textit{Nash diffeomorphic} if there exists a bijection $f\vcentcolon M\to N$ such that both $f$ and $f^{-1}$ are Nash functions. Note that Nash functions are real analytic functions.
    \item[(vi)] Let $Y$ be a Nash submanifold of $\RR^N$.  A \emph{Nash wing} with axis $Y$ is a Nash mapping $w\vcentcolon(-1,1)\times Y\rightarrow\RR^N$ which is a homeomorphism onto its image and enjoys the following properties for each $y\in Y$:
    \begin{enumerate}
        \item $w(0,y)=y$;
        \item $\partial_tw(t,y)\neq0$, for all $t\neq0$;
        \item there is a $q\in\mathbb{N}$ such that $\partial_t^q w(0,y)\ne0$ for all $y\in Y$ whereas $\partial_t^i w(0,y)=0$ for all $0<i<q$ and all $y\in Y$.
    \end{enumerate}
\end{enumerate}
All the definitions can be extended for any real closed field in the place of $\RR$. We choose to write everything in terms of $\RR$ because it better suits our current work. We refer the interested reader to the books \cite{Algebraic} and \cite{Krantz,Lee} for a thorough background on semi-algebraic geometry and on real-analytic, smooth manifolds, respectively.

The \textit{dimension} of a set  $E\subset\RR^{2n}$ may be defined as
\begin{equation*}
  \textup{dim}(E)\vcentcolon=\max\{\textup{dim}(\Gamma):\Gamma\subset E, \Gamma \text{ submanifold of }\RR^{2n}\}.  
\end{equation*}

Concerning sets of complex vectors, any $M\subset\CC^n$ may be considered as a set $E\subset\RR^{2n}$. Through this correspondence, we define the \textit{real dimension} of such an $M$ as 
\begin{equation*}
   \textup{dim}_\RR(M)\vcentcolon=\textup{dim}(E)\in\{-1,0,1,\dots,2n\}. 
\end{equation*}
The case $\textup{dim}(E)=-1$ is devoted to the instance when $E=\emptyset$.

\begin{remark}
Given a polynomial $p\in\CC[z_1,\dots,z_n]$, the set $\mathcal{Z}(p)\cap\SSS$ may be viewed as a semi-algebraic set in $\RR^{2n}$; indeed, it is the intersection of the unit sphere $\SSS=\{z\in\CC^n: 1-||z||^2=0\}$ and the sets $\{z\in\CC^n: \textup{Im }p(z)=0\}$, $\{z\in\CC^n: \textup{Re }p(z)=0\}$. Therefore, in any case, the boundary zeros of a polynomial may be regarded as a semi-algebraic set in $\RR^{2n}$.
\end{remark}

\subsection{Interpolation theory in the unit ball}
Before moving on to the main body of this work, let us recall certain notions with respect to interpolation theory of the unit ball; see \cite[Chapter 10]{Rudin} for information about function theory of the unit ball. Fix a compact set $K\subset\SSS$.
\begin{enumerate}
    \item[(i)] $K$ is a \textit{(Z)-set} (\textit{zero set}) for $A(\BB)$ if there exists a function $f\in A(\BB)$ such that $f(\zeta)=0$ on $K$, but $f(z)\ne0$, for all $z\in\overline{\BB}\setminus K$.
    \item[(ii)] $K$ is a \textit{(PI)-set} (\textit{peak-interpolation set}) for $A(\BB)$ if it satisfies the following property: to each continuous function $g:K\rightarrow\CC$ that is not identically zero corresponds some $f\in A(\BB)$ such that $f(\zeta)=g(\zeta)$ on $K$ and $|f(z)|<\max_{\zeta\in K}|g(\zeta)|$, for all $z\in\overline{\BB}\setminus K$.
    \item[(iii)] $K$ is a called \emph{peak set} for $A^\infty(\BB)$ if there exists $f\in A^\infty(\BB)$ such that $f(\zeta)=0$ for all $\zeta\in K$ and $\mathrm{Re} f(z)>0$ for all $z\in \overline{\BB}\setminus K$. Equivalently, $K$ is a peak set for $A^\infty(\BB)$ if and only if there exists $f\in A^\infty(\BB)$ such that $f(\zeta)=1$, for all $\zeta\in K$, and $|f(z)|<1$, for all $z\in \overline{\BB}\setminus K$. In any case, the function $f$ is called the \emph{peak function} corresponding to $K$. In particular, the pair $(K,f)$ is said to be an \emph{interpolation doublet}.
\end{enumerate}

\begin{remark}\label{equivalence-Z-PI}
\cite[Theorem 10.1.2]{Rudin} yields that $K$ is a (Z)-set for $A(\BB)$ if and only if it is a (PI)-set for  $A(\BB)$. Furthermore, if $K$ is a (Z)-set for $A(\BB)$, then every compact subset $F$ of $K$ must also be a (Z)-set for $A(\BB)$.
\end{remark}

\begin{remark}
If $p\in\CC[z_1,...,z_n]$ is a stable polynomial, then the boundary zeros form a $(Z)$-set.
\end{remark}

We end this subsection by recalling the definition of complex tangential maps.
\begin{enumerate}
    \item[(i)] Pick a point $\zeta\in \SSS.$ The \emph{tangent space} $T_\zeta(\SSS)$ with respect to $\zeta$ and $\SSS$ consists of all vectors $w\in \CC^n$ that are perpendicular to the radius of $\BB$ which ends at $\zeta,$ that is, $w\in T_\zeta(\SSS)$ if and only if $\mathrm{Re }\langle w,\zeta\rangle=0.$
    \item[(ii)]  The corresponding \emph{complex tangent space} $T_\zeta^{\CC}(\SSS)$ is defined to be $T_\zeta(\SSS)\cap(iT_\zeta^{\CC}(\SSS)),$ and consists of all $w$ that satisfy $\langle w,\zeta\rangle=0.$
    \item[(iii)] Let $I\subset\RR$ be an interval on the real line. A $C^1$ curve $\gamma:I\to\SSS$ is said to be a \textit{complex tangential curve} when
$$\langle\gamma'(t),\gamma(t)\rangle=0, \quad\quad\textup{for all }t\in I.$$
That means $\gamma'(t)\in T_{\gamma(t)}^\CC(\SSS),$ for all $t\in I.$
\item[(iv)] Let $\Omega\subset\RR^m$, $m\in\mathbb{N}$, be an open set and let $\Phi:\Omega\to\SSS$ be a $C^1$ map.  Set $M:=\Phi(\Omega)$ and assume that $\Phi$ is one-to-one. Denote the Jacobian of $\Phi$ at $x\in \Omega$ by $\Phi'(x).$ We shall say that $\Phi$ is nonsingular if the rank of $\Phi'(x)$ is $m$ for every $x\in \Omega$.  Put $\zeta=\Phi(x).$  The tangent space $T_\zeta(M)$ with respect to $\zeta$ and $M$ consists of all vectors $\Phi'(x)h,$ as $h$ ranges over $\RR^m.$ The set $M$ is called complex tangential if $T_\po(M)\subset T_\po^{\CC}(\SSS),$ for all points $\po\in M.$ Equivalently, the map $\Phi$ is complex tangential if and only if for any $C^1$ curve $\gamma:[0,1]\rightarrow \Omega$ the map $\Phi\circ\gamma$ is complex tangential (see, e.g., [22, section 10.5.2]).
\end{enumerate}

\section{Auxiliary Results}

This section is devoted to results that will be used throughout the course of the proofs.
Firstly, \cite{VZ} provides a geometric description for the boundary zeros of stable polynomials.

\begin{theorem}[{\cite[Theorem 1]{VZ}}]\label{zeroset}
Let $p\in\CC[z_1,\dots,z_n]$ be a stable polynomial. If $\mathcal{Z}(p)\cap\SSS$ is non-empty, then it is either a finite set, or $\mathcal{Z}(p)\cap\SSS=\cup M_i$; a finite disjoint union of Nash submanifolds $M_i\subset\RR^{2n}$, where each $M_i$ is Nash diffeomorphic to  $(-1,1)^m$, $m=0,1,...,n-1$. In particular, each Nash diffeomorphism $\phi_i:(-1,1)^m\rightarrow M_i$ is complex-tangential and real analytic.
\end{theorem}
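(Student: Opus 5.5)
We would prove the statement in three steps: first show that $\mathcal{Z}(p)\cap\SSS$ is a compact semi-algebraic set of dimension at most $n-1$, then decompose it with semi-algebraic cell decomposition, and finally prove that the positive-dimensional cells are complex tangential, which is the real content.

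\emph{Step 1: semi-algebraicity and dimension.} By the Remark on semi-algebraic sets, $E:=\mathcal{Z}(p)\cap\SSS$ is a compact semi-algebraic subset of $\RR^{2n}$. Suppose $E$ is infinite. We invoke a semi-algebraic cell decomposition (\cite{Algebraic}, stratification into Nash cells). This writes $E$ as a finite disjoint union of Nash submanifolds $M_i$. Each $M_i$ is Nash diffeomorphic to an open hypercube $(-1,1)^{m_i}$, since every cell is Nash diffeomorphic to $(0,1)^{m}$. Zero-dimensional cells are points; these are included as the case $m=0$. If $E$ is finite there is nothing to prove. The remaining tasks are to show $m_i\le n-1$ and that each parametrization $\phi_i$ is complex tangential. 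Real analyticity is automatic, because Nash maps are real analytic.

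\emph{Step 2: complex tangentiality.} Take a $C^1$ curve $\gamma$ in some $M_i$. Then $p(\gamma(t))\equiv 0$, so $\langle \gamma'(t),\overline{\nabla p(\gamma(t))}\rangle=0$, where we use $\nabla p=(\partial_1 p,\dots,\partial_n p)$ and the holomorphic derivative. We would show that at every boundary zero $\zeta$ with $\nabla p(\zeta)\neq0$, the complex gradient satisfies $\overline{\nabla p(\zeta)}=\lambda\zeta$ for some $\lambda\in\CC$. The reason is that $\mathcal{Z}(p)$ is near $\zeta$ a complex hypersurface avoiding $\BB$ and touching $\SSS$ at $\zeta$, so its tangent space at $\zeta$ must lie in the tangent space of the sphere. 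A complex hyperplane contained in $T_\zeta(\SSS)$ must equal $T^\CC_\zeta(\SSS)$. It follows that $\langle\gamma'(t),\gamma(t)\rangle=0$.

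At singular points, where $\nabla p=0$, we would argue differently. Restrict $p$ to the complex line (or small disc) through $\zeta$ in a direction $w$. The one-variable polynomial $\lambda\mapsto p(\zeta+\lambda w)$ has no zeros in the region $\{\lambda:\zeta+\lambda w\in\BB\}$, which near $\lambda=0$ looks like a half-plane when $\langle w,\zeta\rangle\neq0$. Since a holomorphic function vanishing at a boundary point of a zero-free half-disc still has isolated zeros, the zero set of $p$ near $\zeta$ contains no real curve whose tangent leaves $T^\CC_\zeta(\SSS)$.

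An alternative route avoids the singular-point analysis. Replace $p$ by its irreducible factors, and stratify $E$ further so that each cell lies either in the smooth part of $\mathcal{Z}(q)_{\mathrm{red}}$ or in $\mathcal{Z}(q,\nabla q)$. Then iterate on the singular locus: it is a lower-dimensional variety, which we locally parametrize by a normalization or a Puiseux-type map whose image avoids $\BB$. The same half-plane argument then applies to holomorphic maps $h$ from a disc into $\CC^n$ with $\|h\|\ge1$.

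\emph{Step 3: dimension bound and main obstacle.} Once each $M_i$ is complex tangential, its tangent spaces are totally real subspaces of $T^\CC_\zeta(\SSS)\cong\CC^{n-1}$. This uses that complex tangential submanifolds of the sphere are totally real, since the sphere is strictly pseudoconvex. The Levi form gives $\mathrm{Im}\langle u,v\rangle=0$ for tangent vectors $u,v$, so $m_i\le n-1$.

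I expect the main obstacle to be the complex tangentiality at singular boundary zeros, that is, the step in Step 2 handling points where $\nabla p=0$. Here the tangent-cone argument must be made uniform along a whole cell, not just at a point. My first attempt would be the analytic-disc argument from Step 2: take a holomorphic curve $h$ in $\mathcal{Z}(p)$ through the points of the cell, use $\|h\|^2\ge1$ with equality along a real curve, and apply the Hopf lemma or a subharmonicity argument to $\|h\|^2-1$.
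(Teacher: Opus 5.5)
\textbf{There is a genuine gap in Step 2, at singular boundary zeros.} Your Steps 1 and 3 are sound. So is the regular-point case of Step 2: at a zero $\zeta$ with $\nabla p(\zeta)\neq0$, the function $\|z\|^2-1$ is $\ge0$ on the hypersurface and vanishes at $\zeta$. Hence the complex tangent hyperplane of $\mathcal{Z}(p)$ at $\zeta$ lies in $T_\zeta(\SSS)$, and so equals $T^\CC_\zeta(\SSS)$.

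The gap is exactly where you suspected, at boundary zeros with $\nabla p(\zeta)=0$. These can be all of $\mathcal{Z}(p)\cap\SSS$, for instance when $p$ is a square. Neither argument you give there works.
\begin{itemize}
\item Restricting $p$ to the line $\zeta+\CC w$ and noting that its zeros are isolated says nothing about real curves in $\mathcal{Z}(p)$ tangent to $w$, because such curves need not lie on that line. For example, $\{z_2=z_1^2\}$ meets the line $\{z_2=0\}$ only at the origin, yet contains the curve $(t,t^2)$ tangent to it. Moreover, isolated zeros on a line hold for any polynomial not vanishing identically there, stable or not, so stability is never used.
\item Parametrizing the singular locus by a normalization or a ``Puiseux-type'' map is not available as stated. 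Puiseux parametrizations exist for curve germs, and the normalization of a singular locus of dimension $\ge2$ is in general still singular.
\end{itemize}
Also, your concern about making the argument uniform along a cell is unfounded. The condition $\langle\gamma'(t),\gamma(t)\rangle=0$ is pointwise, so a pointwise statement at each $\zeta\in\mathcal{Z}(p)\cap\SSS$ is all you need.

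\textbf{How to close it.} The half-plane idea works if you apply it to the tangent cone rather than to a line.
\begin{itemize}
\item Write $p(\zeta+v)=p_k(v)+O(\|v\|^{k+1})$, with $p_k\not\equiv0$ homogeneous of degree $k\ge1$.
\item For $s>0$, the polynomials $s^{-k}p(\zeta+sv)$ have no zeros on $\{v:2\mathrm{Re}\langle v,\zeta\rangle+s\|v\|^2<0\}$. These sets increase to the half-space $H=\{\mathrm{Re}\langle v,\zeta\rangle<0\}$ as $s\to0^+$, and the polynomials converge locally uniformly to $p_k$.
\item By Hurwitz's theorem (applied on small discs in complex lines), $p_k$ has no zeros in $H$.
\item Since $\mathcal{Z}(p_k)$ is invariant under $v\mapsto\lambda v$ for $\lambda\in\CC\setminus\{0\}$, this forces $\mathcal{Z}(p_k)\subset\{v:\langle v,\zeta\rangle=0\}$.
\item If $\gamma$ is a $C^1$ curve in $\mathcal{Z}(p)$ with $\gamma(t_0)=\zeta$, divide $p(\gamma(t_0+s))=0$ by $s^k$ and let $s\to0$. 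This gives $p_k(\gamma'(t_0))=0$, hence $\langle\gamma'(t_0),\gamma(t_0)\rangle=0$.
\end{itemize}
This holds at regular and singular points alike; for $k=1$ it is your gradient argument.

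Alternatively, iterate regular and singular loci to write $\mathcal{Z}(p)$ as a finite union of complex submanifolds, each disjoint from $\BB$. Apply your local-minimum argument on each, and take a Nash cell decomposition subordinate to this stratification.

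\textbf{Comparison with the paper.} The paper does not reprove this statement; it quotes it from \cite{VZ}. The mechanism it relies on (see the proof of Proposition~\ref{prop}) avoids local analysis of $\mathcal{Z}(p)$ entirely. Since $\mathcal{Z}(p)\cap\SSS$ is the zero set in $\overline{\BB}$ of $p\in A(\BB)$, every compact arc of a $C^1$ curve in it is a (Z)-set, hence a (PI)-set, and Rudin's results then force the curve to be complex tangential. The bound $m\le n-1$ follows because complex tangential manifolds in $\SSS$ are totally real. Your repaired argument is elementary and local; the function-algebra route is shorter once Rudin's theory is available.
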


According to the conjecture of \cite{VZ} and the following theorem due to  A. Aleman, K. Perfekt, S. Richter, C. Sundberg and J. Sunkes, the geometry of $\mathcal{Z}(p)\cap \SSS$ determines whether a polynomial $p$ is cyclic in $\DDD_{\alpha}(\BB)$. Recall that a function $\phi:\BB\rightarrow\CC$ is called a \emph{multiplier} for the space $\DDD_{\alpha}$ if $\phi f\in \DDD_\alpha(\BB)$, for all $f\in \DDD_\alpha(\BB)$.

\begin{theorem}[{\cite[Theorem 3.6]{Perfekt}}]\label{Perf}
Let $f$ and $g$ be multipliers in $\DDD_{\alpha}(\BB)$ and suppose that\\[4pt]
$\mathrm{(i)}$ $f$ and $g$ are zero-free in $\BB;$\\[3pt]
$\mathrm{(ii)}$ $f$ extends to be analytic in a neighbourhood of $\overline{\BB};$\\[3pt]
$\mathrm{(iii)}$ $g$ satisfies a Lipschitz condition of order $a>0;$\\[3pt]
$\mathrm{(iv)}$ $\mathcal{Z}(f)\cap\SSS\subset \mathcal{Z}(g)\cap \SSS.$\\[4pt]
If $g$ is cyclic in $\DDD_{\alpha}(\BB)$, then so is $f$.
\end{theorem}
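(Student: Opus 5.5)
The plan is to show that a high power $g^N$ lies in $[f]_\alpha$, and then to transfer cyclicity from $g$ to $f$. The transfer step is standard. Since $g$ is a cyclic multiplier, $g^N$ is cyclic for every $N$. Indeed, if $p_k g\to 1$, then $p_k g^{j+1}\to g^j$ because multiplication by $g^j$ is bounded, and induction gives $1\in[g^{N}]_\alpha$. Also $[f]_\alpha$ is invariant under multiplication by the multiplier $g$. Hence, once $g^N\in[f]_\alpha$, we get $\DDD_\alpha(\BB)=[g^N]_\alpha\subset[f]_\alpha$. Everything therefore reduces to exhibiting elements of $[f]_\alpha$ converging to $g^N$.

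For the approximants I would use dilations. For $0<r<1$ set $f_r(z)=f(rz)$. By (i) and (ii), $f_r$ is zero-free on a neighbourhood of $\overline{\BB}$ and holomorphic there. So $1/f_r$ is holomorphic near $\overline{\BB}$ and is a multiplier of $\DDD_\alpha(\BB)$; it can also be uniformly approximated with all derivatives by polynomials. Consequently
\[
h_r:=f\cdot\frac{g^N}{f_r}\in[f]_\alpha .
\]
As $r\to1$ we have $h_r\to g^N$ pointwise in $\BB$. Since $[f]_\alpha$ is a closed subspace, it is weakly closed. It therefore suffices to prove that $\sup_{r}\|h_r\|_\alpha<\infty$: a bounded family converging pointwise converges weakly in a reproducing kernel Hilbert space, so $g^N$ is a weak limit of elements of $[f]_\alpha$.

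The main obstacle is the uniform norm bound, and this is where (iii) and (iv) enter. First, because $f$ is real analytic near $\overline{\BB}$, a Łojasiewicz inequality gives
\[
|f(w)|\ge c\,\mathrm{dist}(w,\mathcal{Z}(f))^{m}
\]
for $w$ near $\overline{\BB}$. Because $f$ has no zeros in $\BB$, for $z$ near the sphere one expects $\mathrm{dist}(rz,\mathcal{Z}(f))\gtrsim \mathrm{dist}(z,\mathcal{Z}(f)\cap\SSS)$, uniformly in $r$ close to $1$. I would verify this comparability first, since it is the delicate geometric point; if it fails, I would replace $f_r$ by $f+\varepsilon$ type regularisations or by $f\,\overline{f}$-weighted approximants. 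Second, $g$ vanishes on $\mathcal{Z}(g)\cap\SSS\supset\mathcal{Z}(f)\cap\SSS$ and is Lipschitz of order $a$. This gives
\[
|g(z)|\le C\,\mathrm{dist}(z,\mathcal{Z}(f)\cap\SSS)^{a}.
\]
Hence $|g^N/f_r|\lesssim \mathrm{dist}^{Na-m}$, which is uniformly bounded once $Na>m$. To control derivatives of $g^N/f_r$ as well, I would use Cauchy estimates for $g$ on balls of radius comparable to $1-|z|$, together with the analogous bounds for derivatives of $1/f_r$. Taking $N$ larger then makes $g^N/f_r$, and enough of its derivatives weighted by powers of $(1-|z|)$, uniformly bounded.

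Finally, I would express $\|\cdot\|_\alpha$ through an equivalent integral norm involving radial derivatives $R^k$ with $k>\alpha/2$, either a weighted Bergman-type norm or, for the relevant range, a Hardy-type norm. The Leibniz rule applied to $h_r=f\cdot(g^N/f_r)$ then gives $\|h_r\|_\alpha\lesssim\|f\|_\alpha$ uniformly in $r$. The multiplier hypothesis on $g$ is used to keep each factor in the space while the derivatives are distributed over the factors.
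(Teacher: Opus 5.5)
Your reduction is sound. $g^N$ is cyclic, and $[f]_\alpha$ is invariant under multipliers because $f$ is one. Hence $h_r=f\,g^N/f_r\in[f]_\alpha$, and a norm-bounded family that converges pointwise converges weakly. The paper gives no proof of this statement (it is imported from the cited work), so your argument has to stand on its own.

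\textbf{The geometric comparability is false.} Everything rests on $\sup_r\|h_r\|_\alpha<\infty$, and your proposal does not establish it. The inequality $\mathrm{dist}(rz,\mathcal{Z}(f))\geq c\,\mathrm{dist}(z,\mathcal{Z}(f)\cap\SSS)$, uniformly in $r$, already fails for $f(z)=1-z_1$ in $\Ball$. For $z=(x,\sqrt{1-x^2})$ and $r\to1$ the left side tends to $1-x$, while the right side is $c\sqrt{2(1-x)}$. What is true, and sufficient, is a version with a loss of exponent. Put $E:=\mathcal{Z}(f)\cap\SSS$; by (i) this is exactly the zero set of $|f|$ on the compact set $\overline{\BB}$. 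The {\L}ojasiewicz inequality for $|f|$ and $\mathrm{dist}(\cdot,E)$ on $\overline{\BB}$ gives $|f(w)|\geq c\,\mathrm{dist}(w,E)^\mu$ for $w\in\overline{\BB}$. Equivalently, combine regular separation of $\mathcal{Z}(f)$ and $\overline{\BB}$ with your inequality $|f(w)|\geq c\,\mathrm{dist}(w,\mathcal{Z}(f))^{m}$. Next, for $z\in\overline{\BB}$ and $\zeta\in\SSS$ we have $|z-\zeta|\leq(1-r)|z|+|rz-\zeta|\leq 2|rz-\zeta|$, since $|rz-\zeta|\geq 1-r$. Together these give $|f(rz)|\geq c\,2^{-\mu}\mathrm{dist}(z,E)^\mu$ uniformly in $r$. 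The exponent loss is absorbed by taking $N$ large, so no $f+\varepsilon$ regularisation is needed.

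\textbf{The derivative step does not close as described.} Cauchy estimates only give $(1-|z|)^{j-a}|R^j(g^N/f_r)(z)|\leq C$, where $R$ is the radial derivative. This says $g^N/f_r$ is uniformly Lipschitz of order $a$, which does not bound $\|f g^N/f_r\|_\alpha$ once $\alpha\geq 2a$. The Leibniz term $N f g^{N-1}(R^k g)/f_r$ would be dominated only by $(1-|z|)^{a-k}$, and $\int_{\BB}(1-|z|)^{2a-2k}(1-|z|^2)^{2k-\alpha-1}\,dV=\infty$. The top derivatives of $g$ must be kept in integrated form:
\begin{enumerate}
\item Distribute $R^k$ over the three factors $f$, $g^N$, $1/f_r$.
\item Use $R^j(1/f_r)(z)=(R^j(1/f))(rz)$, so $|R^j(1/f_r)(z)|\leq C|f(rz)|^{-j-1}$.
\item Write $R^{k_2}(g^N)$ as a sum of terms $g^{N-\ell}\prod_{i\leq\ell}R^{m_i}g$ with $\sum m_i\leq k$.
\item Take $N\geq k+\mu(k+1)/a$. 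Then $|R^{k_1}f|\,|g|^{N-\ell}\,|R^{k_3}(1/f_r)|\leq C$ uniformly in $r$, by the bound above and $|g(z)|\leq C\,\mathrm{dist}(z,E)^a$.
\item What remains is $\int_{\BB}\prod_i|R^{m_i}g|^2(1-|z|^2)^{2k-\alpha-1}\,dV$. Keep the factor with the largest $m_i$ in $L^2$, using $\int_{\BB}|R^s g|^2(1-|z|^2)^{2s-\alpha-1}\,dV\leq C\|g\|_\alpha^2$ for $2s>\alpha$. This is where $g\in\DDD_\alpha(\BB)$, i.e.\ the multiplier hypothesis, is genuinely used.
\item Bound the other factors by $C(1-|z|)^{a-m_i}$, and bound all of them this way when every $2m_i\leq\alpha$. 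Choose $k$ large in terms of $\alpha$ and $a$ before choosing $N$, so that the resulting weights are integrable.
\end{enumerate}
This yields $\sup_r\|h_r\|_\alpha<\infty$, with a constant depending on $\|g\|_\alpha$ and the $C^k(\overline{\BB})$ norm of $f$. With these two repairs your dilation argument becomes a complete proof.
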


Theorems \ref{zeroset} and \ref{Perf} naturally lead to the following question.

\begin{question*}[A]\phantomsection\label{Question A}
Given a stable polynomial $p\in \CC[z_1,...,z_n],$ is it possible to find a proper covering of interpolation doublets $(K_\ell, g_\ell)$ such that\\[4pt]
$\mathrm{(i)}$ $\mathcal{Z}(g_\ell)\cap \overline{\BB}=K_\ell\subset \SSS;$\\[3pt]
$\mathrm{(ii)}$ $\mathcal{Z}(p)\cap \SSS\subset \cup K_\ell;$\\[3pt]
$\mathrm{(iii)}$ $\dim_\RR K_\ell\leq \dim_\RR\mathcal{Z}(p)\cap \SSS;$\\[3pt]
$\mathrm{(iv)}$ $g_\ell$ is a cyclic function in some  $\DDD_{\alpha}(\BB);$\\[4pt] 
thereby deducing cyclicity for $p$ in $\DDD_{\alpha}(\BB)?$
\end{question*}

Let us first work out the condition $\mathcal{Z}(g_\ell)\cap \overline{\BB}=K_\ell\subset \SSS.$ This shows that possible candidates for interpolation doublets  $(g_\ell,K_\ell)$ are pairs consisting of compact interpolations sets $K_\ell$ in the unit ball and their corresponding interpolating functions $g_\ell$. Interpolation sets may be sets like (Z)-sets, (PI)-sets, peak sets etc. The goal is to pick the right interpolation doublets that satisfies the nicest conditions possible.

As we have already mentioned, the interpolating function $g_\ell$ obtained from $K_\ell$ must be cyclic in $\DDD_{\alpha}(\BB).$ The following lemma due to P.T. Ziarati, identifies cyclicity for smooth peak functions.

\begin{lemma}[{\cite[Lemma 25]{Ziarati}}]\label{LemmaPuri}
Let $K\subset \SSS$ be a peak set for $A^\infty(\BB)$  that is contained in an $m$-dimensional $C^\infty$ complex tangential manifold in $\SSS.$ Then, there exists a peak function $g\in A^\infty(\BB)$ with respect to $K$ which is cyclic in $\DDD_{\alpha}(\BB)$ for $\alpha\leq \frac{2n-m}{2}.$ 
\end{lemma}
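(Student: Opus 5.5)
The plan is to build the cyclic peak function directly from an arbitrary peak function for $K$ and to prove cyclicity by a bounded-approximation (weak compactness) argument. It suffices to treat the endpoint $\alpha_0=\frac{2n-m}{2}$. Indeed, for $\alpha\le\alpha_0$ we have $\|\cdot\|_\alpha\le C\|\cdot\|_{\alpha_0}$ and $\DDD_{\alpha_0}(\BB)\subset\DDD_\alpha(\BB)$ densely. So if $p_kg\to1$ in $\DDD_{\alpha_0}(\BB)$, then also $p_kg\to1$ in $\DDD_\alpha(\BB)$.

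\textbf{Step 1: a peak function with quantitative estimates.} Since $K$ is a peak set for $A^\infty(\BB)$ contained in an $m$-dimensional $C^\infty$ complex tangential manifold $M\subset\SSS$, the interpolation theory of Chaumat--Chollet and Bruna--Ortega provides $h\in A^\infty(\BB)$ with the following properties:
\begin{itemize}
\item $h=0$ on $K$ and $\mathrm{Re}\,h>0$ on $\overline{\BB}\setminus K$;
\item the lower bound $|h(z)|\ge \mathrm{Re}\,h(z)\gtrsim (1-|z|^2)+\mathrm{dist}(z,K)^2$, i.e.\ $h$ vanishes to second order in the complex tangential directions and to first order transversally.
\end{itemize}
Complex tangency of $M$ is exactly what makes this second-order behaviour compatible with holomorphy. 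Then $g=h$ (equivalently $1-h$ after normalization) is a peak function for $K$. Being in $A^\infty(\BB)$, it is a multiplier of every $\DDD_\alpha(\BB)$.

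\textbf{Step 2: the approximating sequence.} For $\varepsilon>0$ set $\varphi_\varepsilon=h/(h+\varepsilon)$. Since $\mathrm{Re}(h+\varepsilon)\ge\varepsilon$, the function $1/(h+\varepsilon)$ lies in $A^\infty(\BB)$. Its Taylor polynomials (or dilations followed by Taylor polynomials) converge to it in multiplier norm. Hence $\varphi_\varepsilon\in[h]_{\alpha_0}$.

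Moreover $\varphi_\varepsilon\to1$ pointwise in $\BB$, because $h\ne0$ there. So if $\sup_\varepsilon\|\varphi_\varepsilon\|_{\alpha_0}<\infty$, a subsequence converges weakly, necessarily to $1$ by continuity of point evaluations. Since $[h]_{\alpha_0}$ is weakly closed, $1\in[h]_{\alpha_0}$, and $h$ is cyclic.

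\textbf{Step 3 (main obstacle): the uniform norm bound.} We must show that $1-\varphi_\varepsilon=\varepsilon/(h+\varepsilon)$ is bounded in $\DDD_{\alpha_0}(\BB)$ uniformly in $\varepsilon$. I would use the equivalent integral norm: take an integer $s>\alpha_0/2$ and use the radial derivative $R^s$ integrated against $(1-|z|^2)^{2s-\alpha_0-1}\,dV$.

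Expanding $R^s\bigl(\varepsilon(h+\varepsilon)^{-1}\bigr)$ gives terms of the form
$$\varepsilon\,\frac{\prod_j R^{a_j}h}{(h+\varepsilon)^{k+1}},\qquad \sum_j a_j=s .$$
Each derivative of $h$ costs at most a factor $\sqrt{|h|}$ in the complex tangential directions, and radial derivatives are bounded. I then integrate over nonisotropic (Koranyi-type) tubes around $M$. A tube of nonisotropic radius $\delta$ around an $m$-dimensional complex tangential manifold has volume about $\delta^{\,n+1-m/2}$ (with the $(1-|z|^2)$ direction of size $\delta$), since each tangential direction of $M$ has Euclidean size $\sqrt\delta$ in these coordinates.

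Splitting into the regions $\{|h|\le\varepsilon\}$ and the dyadic shells $\{|h|\sim2^j\varepsilon\}$, the exponent $\alpha_0=n-\frac m2$ is precisely the value at which the dyadic sum becomes a bounded (at worst logarithmically balanced) geometric series. Getting this bookkeeping right, including the uniform derivative bounds for $h$ near $K$ and not just on $M$, is the step I expect to be the hardest; everything else is soft functional analysis.
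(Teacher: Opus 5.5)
The paper contains no proof of this lemma. It is quoted from \cite[Lemma 25]{Ziarati}, and the surrounding remarks stress two things: the Hopf bound $|g(z)|\geq C\,\mathrm{dist}(z,\SSS)$ is not sufficient, and what matters is that the peak function behave like $1-\langle z,\zeta\rangle$ near each $\zeta\in K$.

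Your soft framework is fine. The reduction to $\alpha_0=\frac{2n-m}{2}$ is correct. The membership $\varphi_\varepsilon\in[h]_{\alpha_0}$ holds; you only need the Taylor polynomials of $1/(h+\varepsilon)$ to converge in $\DDD_{\alpha_0}$-norm, since $h$ is a multiplier. Boundedness plus pointwise convergence then gives $1\in[h]_{\alpha_0}$. This is the regularized analogue of the radial dilation method.

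The gap is that Step~3, which is the whole content of the lemma, is not carried out, and the input you record in Step~1 is too weak to carry it out. With $R^s$ and $h\in A^\infty(\BB)$, every term of $R^s\bigl(\varepsilon/(h+\varepsilon)\bigr)$ is bounded by $C\varepsilon|h+\varepsilon|^{-s-1}$. So derivative bounds are a non-issue, and everything rests on the volume of $\{|h|<\delta\}$. Your count presupposes that this set lies in the Kor\'anyi tube $\{z\in\BB:\inf_{\zeta\in K}|1-\langle z,\zeta\rangle|<C\delta\}$, of volume at most $C\delta^{n+1-m/2}$. But $\mathrm{Re}\,h\geq c\,[(1-|z|^2)+\mathrm{dist}(z,K)^2]$ only places it in $\{1-|z|^2<C\delta,\ \mathrm{dist}(z,K)<C\sqrt{\delta}\}$. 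That set has width $\sqrt\delta$ rather than $\delta$ in the complex normal direction $i\zeta$, and volume of order $\delta^{n+\frac12-\frac m2}$. Fed into your dyadic sum, this gives only $\|\varepsilon/(h+\varepsilon)\|_{\alpha_0}^2\leq C\varepsilon^{-1/2}$, i.e.\ a uniform bound only for $\alpha\leq\alpha_0-\frac12$. The missing ingredient is linear growth of $\mathrm{Im}\,h$ in the direction $i\zeta$.

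That ingredient can be extracted from your own hypotheses, as follows.
\begin{enumerate}
\item Since $h\in C^2(\overline{\BB})$ and $\mathrm{Re}\,h|_{\SSS}$ attains its minimum $0$ at each $\zeta\in K$, the vector $(\partial_jh(\zeta))_j$ is a real multiple of $\overline{\zeta}$. Hence $h(z)=\kappa(\zeta)(1-\langle z,\zeta\rangle)+O(|z-\zeta|^2)$ uniformly in $\zeta\in K$.
\item Testing along $z=(1-t)\zeta$ against $\mathrm{Re}\,h\geq c(1-|z|^2)$ gives $\kappa(\zeta)\geq 2c$.
\item Choosing $\zeta\in K$ nearest to $z$ and using $|z-\zeta|^2\leq C\,\mathrm{Re}\,h(z)$ yields $|1-\langle z,\zeta\rangle|\leq C|h(z)|$.
\item Complex tangency of $M$ gives $|1-\langle\zeta,\zeta'\rangle|\leq C|\zeta-\zeta'|^2$ for nearby $\zeta,\zeta'\in K$. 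By the triangle inequality for $|1-\langle z,w\rangle|^{1/2}$, the tube is covered by $C\delta^{-m/2}$ Kor\'anyi balls of volume comparable to $\delta^{n+1}$.
\item Take $s$ with $2s\geq\alpha_0+1$, so that the weight satisfies $(1-|z|^2)^{2s-\alpha_0-1}\leq C\delta^{2s-\alpha_0-1}$ on $\{|h|<\delta\}$. Also use $|h+\varepsilon|\geq\max(|h|,\varepsilon)$, valid because $\mathrm{Re}\,h\geq0$.
\item Since $n+1-\frac m2=\alpha_0+1$, the set $\{|h|<\varepsilon\}$ contributes $O(1)$, and each shell $\{|h|\sim2^j\varepsilon\}$ contributes at most $C4^{-j}$.
\end{enumerate}
So the series is genuinely geometric. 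The hedge ``at worst logarithmically balanced'' should go: a logarithmic divergence would make $\{\varphi_\varepsilon\}$ unbounded, and the weak-compactness step would fail. With these additions, and granting the cited existence of $h$ with the quadratic lower bound, your argument is complete.
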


An application of Hopf's Lemma (see \cite[Proposition 12.2]{Fornaess book}) yields $|g(z)|\geq C\mathrm{dist}(z,\SSS),$ $z\in \BB,$ for some uniform constant $C>0.$ However, this estimate is not sufficient to identify cyclicity. These peak functions behave locally around a point of $K$ like a peak function $g(z)=1-z_n+\epsilon(z),$ where $\epsilon(z)=o(||z-(0,0,..,1)||)$ near $z\in (0,...,1)$. Furthermore, J. Bruna and J.M. Ortega in \cite[Theorem 6.2 and Lemma 6.3]{Bruna} obtain sharp estimates of the behaviour of such peak functions in the unit ball which may be compared with the ones in \cite[Theorem 21]{Chollet} and \cite{Ziarati}. So, it is reasonable to focus on interpolation doublets $(g_\ell, K_\ell)$ for $A^\infty(\BB),$ where $K_\ell$ have to be contained globally in a complex tangential manifold of the minimal possible dimension.

On the other hand, the theorem below gives a sufficient condition of identifying non-cyclicity for polynomials and shows that the compact interpolating sets $K_\ell$ must satisfy the condition $(iii)$ of \hyperref[Question A]{Question (A)}, that is,  $\dim_\RR K_\ell\leq \dim_\RR\mathcal{Z}(p)\cap \SSS.$

\begin{theorem}[{\cite[Theorem 3]{VZ}}]\label{non-cyclicity}
Let $p\in\CC[z_1,\dots,z_n]$ be a stable  polynomial. Suppose that $\mathcal{Z}(p)\cap\SSS$ contains a real submanifold in $\RR^{2n}$ of dimension $m\in\{1,...,n-1\},$ but no submanifold of any higher dimension. Then $p$ is not cyclic in $D_\alpha(\BB)$ whenever $\alpha>\frac{2n-m}{2}$.
\end{theorem}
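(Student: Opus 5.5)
The plan is to construct a bounded linear functional on $\DDD_\alpha(\BB)$ that annihilates every product $p\,q$ with $q$ a polynomial, but does not annihilate $1$. Such a functional rules out cyclicity at once: if $p\,q_k\to 1$ in $\DDD_\alpha(\BB)$, then $0=L(p\,q_k)\to L(1)\neq 0$, a contradiction.

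\textbf{Setup.} Let $\Gamma\subset\mathcal{Z}(p)\cap\SSS$ be the given real $m$-dimensional submanifold. Shrinking it, I may assume $\Gamma$ is a relatively compact coordinate patch. Let $\mu$ be a smooth positive measure on $\Gamma$: take surface measure times a nonnegative bump function, with $\mu\neq 0$. On polynomials define
\[
L(f)=\int_\Gamma f\,d\mu .
\]
Then $L(pq)=0$ for every polynomial $q$, because $p\equiv 0$ on $\Gamma$, while $L(1)=\mu(\Gamma)>0$. Since polynomials are dense in $\DDD_\alpha(\BB)$, it suffices to show that $L$ is bounded in the $\|\cdot\|_\alpha$ norm on polynomials. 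It then extends continuously, and the contradiction above applies.

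\textbf{Boundedness via the kernel.} $\DDD_\alpha(\BB)$ is a reproducing kernel space with kernel
\[
K_\alpha(z,w)=\sum_{j\ge0}(n+j)^{-\alpha}\frac{(n-1+j)!}{(n-1)!\,j!}\langle z,w\rangle^j .
\]
The functional $L$ is bounded if and only if the energy $\int\!\!\int K_\alpha(z,w)\,d\mu(z)\,d\mu(w)$ is finite. I would proceed in two steps.
\begin{itemize}
\item The coefficients are $\asymp j^{\,n-1-\alpha}$. Hence for $n-1<\alpha<n$ one has $|K_\alpha(z,w)|\lesssim |1-\langle z,w\rangle|^{\alpha-n}$, with a logarithmic bound at $\alpha=n$. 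For $\alpha>n$ the kernel is bounded on $\overline{\BB}\times\overline{\BB}$, so that case is trivial.
\item On the sphere one has the elementary identity $1-\mathrm{Re}\langle z,w\rangle=\tfrac12\|z-w\|^2$. Therefore $|1-\langle z,w\rangle|\ge\tfrac12\|z-w\|^2$, which gives
\[
|K_\alpha(z,w)|\lesssim \|z-w\|^{-2(n-\alpha)} \quad\text{for } z,w\in\Gamma .
\]
\end{itemize}
This lower bound holds for any subset of $\SSS$, so no complex-tangentiality of $\Gamma$ is needed. Theorem~\ref{zeroset} is therefore not required here, although it confirms that such $\Gamma$ exist.

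\textbf{Finishing.} Since $\mu$ is a smooth density on an $m$-dimensional manifold, $\int\!\!\int\|z-w\|^{-s}\,d\mu\,d\mu<\infty$ exactly when $s<m$. Taking $s=2(n-\alpha)$, the condition $s<m$ is precisely $\alpha>\frac{2n-m}{2}$. Note that $m\le n-1$ gives $\frac{2n-m}{2}>n-\frac12>n-1$, so we are always in the regime covered by the first step or by the trivial case $\alpha\ge n$.

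\textbf{Main obstacle.} The only step needing care is the pointwise asymptotic bound on $K_\alpha$ from its coefficients. I would prove it by comparing $\sum_j j^{\,n-1-\alpha}\lambda^j$ with $(1-\lambda)^{\alpha-n}$ for $|\lambda|\le1$, $\lambda\neq1$, using the standard estimate $|\sum_j j^{\beta-1}\lambda^j|\lesssim|1-\lambda|^{-\beta}$ for $\beta>0$. This estimate follows by Abel summation, or from the binomial series of $(1-\lambda)^{-\beta}$ whose coefficients are $\asymp j^{\beta-1}$ with a summable correction.
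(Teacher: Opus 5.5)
The paper does not prove this statement; it imports it from \cite{VZ}, so I assessed your argument on its own. Your strategy is the standard capacity-type argument and is sound in outline: a positive measure $\mu$ on an $m$-dimensional patch $\Gamma\subset\mathcal{Z}(p)\cap\SSS$ with finite $K_\alpha$-energy gives a bounded functional that vanishes on $p\,\CC[z_1,\dots,z_n]$ but not on $1$. You are also right that the hypothesis excluding higher-dimensional submanifolds is not needed for this direction.

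\textbf{The gap.} The sentence ``$m\le n-1$ gives $\frac{2n-m}{2}>n-\frac12>n-1$'' is false. In fact $\frac{2n-m}{2}=n-\frac m2\le n-\frac12$, with equality only when $m=1$. What $m\le n-1$ actually gives is $\frac{2n-m}{2}\ge\frac{n+1}{2}$. So for $m\ge 3$ (hence $n\ge 4$) the range $n-\frac m2<\alpha\le n-1$ is part of the statement; for example $n=4$, $m=3$, $\frac52<\alpha\le 3$.

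In that range you have not established the kernel bound. Your first bullet only claims it for $n-1<\alpha<n$. The method you sketch also does not extend: with $\beta=n-\alpha\ge 1$, the difference between $c_j=(n+j)^{-\alpha}\binom{n-1+j}{j}$ and a multiple of the binomial coefficients $\Gamma(j+\beta)/(\Gamma(\beta)\,j!)$ of $(1-\lambda)^{-\beta}$ is only $O(j^{\beta-2})$, which is not summable. As written, the proof therefore covers only $m\in\{1,2\}$.

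\textbf{The repair.} The bound $|K_\alpha(z,w)|\le C\,|1-\langle z,w\rangle|^{\alpha-n}$ in fact holds for every $0<\alpha<n$ and all $z,w\in\BB$; note that $\alpha>n-\frac m2>0$ here. Writing $(n+j)^{-\alpha}=\Gamma(\alpha)^{-1}\int_0^\infty t^{\alpha-1}e^{-(n+j)t}\,dt$ and summing the series under the integral (Tonelli, since $|\langle z,w\rangle|<1$) gives
\[
K_\alpha(z,w)=\frac{1}{\Gamma(\alpha)}\int_0^\infty t^{\alpha-1}e^{-nt}\bigl(1-e^{-t}\langle z,w\rangle\bigr)^{-n}\,dt .
\]
For $|\lambda|\le 1$ one has $|1-e^{-t}\lambda|\ge\max\{\tfrac12|1-\lambda|,\,1-e^{-t}\}$. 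Split the integral at $t=\delta:=|1-\lambda|$:
\begin{itemize}
\item the part over $t<\delta$ is at most $\frac{2^n}{\alpha}\delta^{\alpha-n}$;
\item the part over $t\ge\delta$ is at most $\int_\delta^\infty t^{\alpha-1}(e^t-1)^{-n}\,dt\le\frac{\delta^{\alpha-n}}{n-\alpha}+C$, using $e^t-1\ge t$.
\end{itemize}
Since $\delta\le 2$, this gives the claimed bound.

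To make ``$L$ is bounded iff the energy is finite'' rigorous, use the identity $\|L\|^2=\sum_j c_j\iint\langle z,w\rangle^j\,d\mu(z)\,d\mu(w)$. This is a series of nonnegative terms, so it equals $\lim_{r\to1^-}\iint K_\alpha(rz,w)\,d\mu(z)\,d\mu(w)$. The elementary inequality $|1-r\lambda|\ge\frac12|1-\lambda|$ then transfers the kernel bound to the sphere, so that $|K_\alpha(rz,w)|\le C'\|z-w\|^{-2(n-\alpha)}$ uniformly in $r$. With these two changes your argument covers the full range $\alpha>n-\frac m2$.
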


Next, let us recall theorems regarding peak sets. J. Chaumat and A.M. Chollet identified a condition for a compact set to be a peak set for $A^\infty(\BB).$

\begin{theorem}[{\cite[Theorem 21]{Chollet}}]\label{RudinThm}
If $N\subset \SSS$ is a $C^\infty$ manifold that is complex tangential, then each compact $K\subset N$ is a peak set for $A^\infty(\BB).$
\end{theorem}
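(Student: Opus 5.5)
The plan is to build the peak function by hand: first a smooth, almost holomorphic function with the right sign, then a $\bar\partial$-correction that makes it holomorphic without destroying the sign. I will first reduce to a local statement. Suppose every point $\zeta\in N$ has a neighbourhood $U_\zeta$ with the following property: every compact subset of $N\cap\overline{U_\zeta}$ is a peak set for $A^\infty(\BB)$. Cover $K$ by finitely many such neighbourhoods and write $K=\cup_{j}K_j$ with $K_j=K\cap\overline{U_j}$. If $h_j\in A^\infty(\BB)$ peaks on $K_j$, meaning $h_j=1$ on $K_j$ and $|h_j|<1$ elsewhere on $\overline\BB$, then $f_j=1-h_j$ satisfies $f_j=0$ on $K_j$ and $\mathrm{Re} f_j>0$ off $K_j$. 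The product $\prod_j(1+h_j)/2^{J}$, with $J$ the number of patches, is not quite what we want, so I will instead take the standard combination $h=\prod_j h_j^{\ast}$, where $h_j^\ast=(1+h_j)/2$. Each $h_j^\ast$ is again an $A^\infty$ peak function for $K_j$. Their product equals $1$ exactly on $\cap_j K_j$, so products handle intersections, not unions. For unions I will use the additive form instead: $f=\sum_j f_j^{1/2}$ is not smooth either. Hence the cleanest route is to avoid gluing altogether and make the local construction global along $N$ via a partition of unity at the level of the smooth data, before any $\bar\partial$-correction.

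The geometric input is that a complex tangential submanifold $N\subset\SSS$ is totally real. Indeed, if $w$ and $iw$ both lay in $T_\zeta N\subset T^\CC_\zeta\SSS$, then differentiating $\langle\gamma,\gamma\rangle=1$ together with complex tangency forces the Levi form of the sphere, $|w|^2$, to vanish. In particular $\dim_\RR N\le n-1$.

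On a totally real $N$ one can construct $F\in C^\infty(\CC^n)$ with the following properties.
\begin{enumerate}
\item[(a)] $F|_N=\psi$, where $\psi\ge0$ is a smooth function on $N$ with $\psi^{-1}(0)=K$.
\item[(b)] $\bar\partial F$ vanishes to infinite order on $N$, by taking an almost holomorphic extension.
\item[(c)] $\mathrm{Re}\,F(z)\ge c\big(1-\|z\|^2+\mathrm{dist}(z,N)^2\big)$ near $N$ on $\overline\BB$, with $c>0$.
\end{enumerate}
Locally one takes $F(z)=1-\langle z,a(z)\rangle$ plus a quadratic correction, where $a$ is an almost holomorphic extension of the identity of $N$. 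Complex tangency of $N$ is exactly what makes the first-order terms of $\mathrm{Re}(1-\langle z,\zeta\rangle)$ along $N$ vanish, and what lets the quadratic part be dominated by $1-\|z\|^2+\mathrm{dist}(z,N)^2$. The local pieces are then patched with a partition of unity on a neighbourhood of $N$. The errors this introduces in $\bar\partial F$ are still flat on $N$, and away from $N$ one adds a large positive constant times a cutoff, which keeps $\mathrm{Re}\,F>0$ on $\overline\BB\setminus K$.

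Next I solve $\bar\partial u=\bar\partial F$ on $\BB$ using the Henkin (or Kerzman) integral solution operator. Since the data are flat on $N$, weighted estimates should give $u\in C^\infty(\overline\BB)$ vanishing to infinite order on $K$, with $|u(z)|\le C_k\,\mathrm{dist}(z,N)^k$. Then $f=F-u\in A^\infty(\BB)$, $f=0$ on $K$, and near $N$
\[
\mathrm{Re}\,f\ge c\big(1-\|z\|^2+\mathrm{dist}(z,N)^2\big)+\psi\text{-term}-C\,\mathrm{dist}(z,N)^3>0 \quad\text{off }K.
\]
I expect the main obstacle to be this step: showing that the $\bar\partial$-solution inherits flatness along $N$ up to the boundary, so that it cannot destroy the positivity of $\mathrm{Re}\,f$ near $K$. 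Points of $N\setminus K$, where the $\psi$-term is what keeps $\mathrm{Re}\,f>0$, need particular care. There I would first try the Chaumat–Chollet weighted kernel estimates, which are the standard tool for exactly this kind of flatness-preserving $\bar\partial$-correction.
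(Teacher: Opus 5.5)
The paper gives no proof of this statement; it quotes it from Chaumat--Chollet. Your proposal therefore has to stand on its own, and it does not yet. At the step you flag, the additive $\bar\partial$-correction does not work.

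\textbf{The gap.} The Henkin or Kerzman solution of $\bar\partial u=\bar\partial F$ is produced by a global integral operator, and $u$ is only determined modulo $A^\infty(\BB)$. So flatness of the data on $N$ gives no control of $u$ on $N$. For example, if $\bar\partial F$ vanishes identically near $N$, then $u$ is just some holomorphic function there. Nothing forces $u|_K=0$, so $f=F-u$ need not vanish on $K$ at all. To make $u$ flat on $K$ you would have to subtract an $A^\infty(\BB)$ function with a prescribed $\bar\partial$-flat Whitney jet on $K$. That is a jet-interpolation theorem of the same depth as the statement itself, so deferring to ``Chaumat--Chollet estimates'' outsources the core of the proof. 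There is a second problem: even a flat $u$ would only give $\mathrm{Re}\,f>0$ near $N$. Away from $N$ the correction is uncontrolled, and the term $M\chi$ cannot absorb it. The reason is that $M\bar\partial\chi$ is part of the data, so $u$ grows linearly with $M$.

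\textbf{The fix.} Correct $1/F$ rather than $F$, so that the correction only needs to be bounded. Granting (a)--(c), near $K$ on $\overline{\BB}$ you have:
\begin{itemize}
\item $|F|\ge c\,\mathrm{dist}(z,N)^2$;
\item $F=\psi\neq 0$ on $N\setminus K$;
\item $\bar\partial F$ and all its derivatives are $O(\mathrm{dist}(z,N)^k)$ for every $k$.
\end{itemize}
Hence every $j$-th derivative of $\bar\partial F/F^2$ is $O(\mathrm{dist}(z,N)^{k-4-2j})$. Take a cutoff $\chi\equiv 1$ near $K$, supported where (c) holds. Then $\omega:=\bar\partial(\chi/F)$ extends by $0$ across $K$ to a $\bar\partial$-closed form, smooth on $\overline{\BB}$.

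Now take $u\in C^\infty(\overline{\BB})$ with $\bar\partial u=\omega$ (Kerzman), and choose $M>\sup|u|$. Then $g=\chi/F-u+M$ is holomorphic in $\BB$, and $\mathrm{Re}\,g>0$ on $\overline{\BB}\setminus K$, because $\mathrm{Re}(1/F)=\mathrm{Re}\,F/|F|^2\ge 0$ there. Set $h=1/g$; near $K$ it equals $F/(1+F(M-u))$. So $h\in A^\infty(\BB)$, $h$ vanishes exactly on $K$, and $\mathrm{Re}\,h>0$ elsewhere. The singularity of $1/F$ at $K$ is what dominates the bounded correction.

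\textbf{The patching step also needs repair.} When $\dim_\RR N<n$, two $\bar\partial$-flat extensions of $\psi$ can differ by a term vanishing only to first order on $N$, such as a multiple of a normal holomorphic coordinate. Consequently $\sum_\alpha\bar\partial\chi_\alpha\,F_\alpha$ is not flat on $N$ unless all local pieces share one global $\bar\partial$-flat jet along $N$. Fix that jet first, and then Whitney-extend.
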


In the above statement, a $C^\infty$ manifold $N\subset \SSS$ is called complex tangential if it is locally complex tangential; that is, for every point $\po\in M$ there exists a local chart $(\Omega,\Phi)$ with respect to $\po$ such that the set $\Phi(\Omega)\subset M$ is complex tangential. It is also well known that $C^\infty$ complex tangential manifolds in the sphere are totally real and have
dimension at most $n- 1,$ see \cite[Theorem 10.5.4-10.5.6]{Rudin}. Theorem~\ref{RudinThm} shows that each compact set $K\subset M_i,$ where $M_i$ is a component of $\mathcal{Z}(p)\cap \SSS$ is a peak set for $A^\infty(\BB).$ 

Let us recall that a compact set $K$ is locally contained in $C^\infty$ complex tangential manifolds in $\SSS$ if for each $\zeta\in K,$ there exists an open neighbourhood $U\subset\CC^n$ of $\zeta$ and a $C^\infty$ complex tangential manifold $N$ in $\SSS$ such that $K\cap U$ lies in $N.$

\begin{theorem}[{\cite[Theorem 7]{Chaumat}}]\label{peak contained locally}
If $K\subset \SSS$ is a peak set for $A^\infty(\BB),$ then it is contained in $C^\infty$ complex tangential manifolds of dimension $n-1$ in $\SSS.$
\end{theorem}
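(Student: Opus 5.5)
The plan is to work locally at a point $\zeta\in K$ and use the peak function as a defining function. Let $f\in A^\infty(\BB)$ satisfy $f=0$ on $K$ and $\mathrm{Re} f>0$ on $\overline{\BB}\setminus K$. After a unitary change of variables we take $\zeta=(0,\dots,0,1)$, and we extend $f$ to a $C^\infty$ function on a neighbourhood of $\zeta$ in $\CC^n$ satisfying $\bar\partial f=O(\mathrm{dist}(\cdot,\SSS)^\infty)$. Since Nash and real analytic issues play no role here, everything is purely $C^\infty$.

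\emph{Step 1: first-order information.} The function $u=\mathrm{Re} f$ is pluriharmonic and positive in $\BB$, and it attains the value $0$ at $\zeta$. Hopf's lemma (as quoted after Lemma~\ref{LemmaPuri}) gives a strictly negative outward normal derivative. Because $u|_{\SSS}\ge 0$ vanishes at $\zeta$, all tangential derivatives of $u$ vanish there. By holomorphy, $\partial f(\zeta)=-c\,\bar\zeta$ with $c>0$. The same holds at every point of $K$ near $\zeta$, with $c$ locally bounded below. In particular $d(\mathrm{Im} f)$ is nonzero in the direction $i\zeta$ and vanishes on $T^{\CC}_\zeta(\SSS)$. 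Hence $\{\mathrm{Im} f=0\}\cap\SSS$ is a smooth real hypersurface of $\SSS$ near $\zeta$ containing $K$. This already cuts the dimension to $2n-2$.

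\emph{Step 2: cutting down to dimension $n-1$.} On $K$, the function $u|_{\SSS}\ge0$ attains its minimum, so $Xu=0$ on $K$ for every tangential vector field $X$. Choose smooth complex tangential fields $X_1,\dots,X_{n-1}$ near $\zeta$ spanning a totally real subspace $V$ of $T^{\CC}(\SSS)$, for instance $V=\RR^{n-1}\times\{0\}$ rotated by a phase $e^{i\theta}$ to be fixed later. Set $g_j=X_ju$ and
\[
\Sigma=\{z\in\SSS:\ \mathrm{Im} f(z)=0,\ g_1(z)=\dots=g_{n-1}(z)=0\},
\]
so that $K\subset\Sigma$ near $\zeta$. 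We will show that the differentials are independent at $\zeta$ by computing the Jacobian of $(\mathrm{Im} f,g_1,\dots,g_{n-1})$ along the directions $i\zeta, JX_1,\dots,JX_{n-1}$. By the Cauchy--Riemann equations and the commutator $[X_j,JX_k]$, whose $i\zeta$-component is the Levi form of $\SSS$ (the identity), this Jacobian is block triangular. Its lower block is $c\,I_{n-1}+\mathrm{Re}\big(e^{2i\theta}H\big)$, where $H$ is the complex Hessian of $f$ restricted to $T^{\CC}_\zeta$. Since $c>0$, a suitable choice of $\theta$ (or of the totally real subspace) makes this block invertible. The implicit function theorem then shows that $\Sigma$ is an $(n-1)$-dimensional $C^\infty$ submanifold of $\SSS$ containing $K$ near $\zeta$.

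\emph{Step 3: complex tangentiality.} This is the main obstacle, since $\Sigma$ is not automatically complex tangential away from $K$. We need $T_z\Sigma\subset T_z^{\CC}(\SSS)$, that is, $T\Sigma$ must be annihilated by the contact form $\mathrm{Im}\langle\cdot,z\rangle$. At points of $K$ this holds: $d\,\mathrm{Im} f$ is a nonzero multiple of that contact form there, and $T\Sigma\subset\ker d\,\mathrm{Im} f$. Away from $K$ the two forms may differ. To handle this, I would first try to replace $\mathrm{Im} f$ by the function $\rho=\mathrm{Im}\log f$ (suitably regularised), or else correct $\Sigma$ by a flat perturbation. The point is that $K$ is an $A^\infty$ peak set, so $u$ vanishes to infinite order in the complex tangential directions wherever the Hessian degenerates. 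The goal is to build a $C^\infty$ Legendrian (maximal complex tangential) submanifold that agrees with $\Sigma$ to infinite order along $K$. Concretely, I would solve the Legendrian condition by integrating the contact structure from a generating function, using the standard local model in which Legendrian submanifolds of $\SSS$ are graphs $\{(x,\partial_x S(x),S(x))\}$. I would choose $S$ by Whitney extension so that its jets along the image of $K$ are dictated by $\Sigma$. The Whitney jet compatibility is exactly where the flatness coming from $f\in A^\infty$ is needed.
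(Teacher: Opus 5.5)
The paper gives no proof of this statement; it is quoted from Chaumat--Chollet, so your argument has to stand on its own.

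\textbf{Step 2 has one slip.} Since $u|_{\SSS}$ has a critical point at $\zeta$, you get $dg_j(\zeta)=\mathrm{Hess}(u|_{\SSS})(X_j,\cdot)$, and commutators contribute nothing. Evaluated on $JX_k$ this equals $-\mathrm{Im}(e^{2i\theta}H_{jk})$, with no $c\,\delta_{jk}$ term. The block $cI_{n-1}+\mathrm{Re}(e^{2i\theta}H)$ is instead the Jacobian along $X_1,\dots,X_{n-1}$. There the $c\,\delta_{jk}$ comes from $\partial f(\zeta)=-c\bar\zeta$ applied to the normal component of $D_{X_k}X_j$. If you take $V$ to be the real span of a Takagi basis of $H$, the block becomes $cI+D$ with $D\ge 0$ diagonal, which is invertible, and moreover $T_\zeta\Sigma=iV$.

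\textbf{The genuine gap is Step 3.} You leave it as a list of strategies, and the ones you sketch do not work as stated.
\begin{enumerate}
\item Your flatness claim is false. For $n=2$, take $f=1-z_2-\tfrac12 z_1^2$. It satisfies $\mathrm{Re} f>0$ on $\overline{\Ball}\setminus\{(0,1)\}$, and the Hessian of $u|_{\mathbb S_2}$ at $(0,1)$ vanishes in the $\mathrm{Re}\, z_1$ direction. Yet $u(x,\sqrt{1-x^2})=x^4/8+O(x^6)$, which is not flat.
\item A Legendrian agreeing with $\Sigma$ to infinite order along $K$ need not exist, because it would force the contact form restricted to $\Sigma$ to be flat on $K$. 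For $f=1-z_2$ and the complex tangential field $X(z)=e^{i|z_1|^2}(\bar z_2,-\bar z_1)$, your $\Sigma$ is $\gamma(\rho)=(i\rho e^{i\rho^2},\sqrt{1-\rho^2})$. Here $\langle\gamma'(\rho),\gamma(\rho)\rangle=2i\rho^3$, which vanishes only to third order at $\rho=0$.
\end{enumerate}

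\textbf{The missing observation} is that first-order tangency along $K$, which you did prove, is already enough.
\begin{enumerate}
\item Transport a neighbourhood of $\zeta$ in $\SSS$ by a Cayley transform to the Heisenberg group, then change variables polynomially. This gives coordinates $(x,y,s)\in\RR^{n-1}\times\RR^{n-1}\times\RR$ centred at $\zeta$ in which the complex tangential vectors are exactly $\ker(ds-\sum_j y_j\,dx_j)$.
\item After a unitary change of $z'$, the plane $T_\zeta\Sigma$ meets the $y$-plane only in $0$. Hence near $\zeta$ you can write $\Sigma=\{(x,Y(x),Z(x))\}$ with $Y,Z$ of class $C^\infty$.
\item Complex tangentiality of $\Sigma$ at points of $K$ says exactly that $\nabla Z=Y$ on the projection of $K$.
\item Therefore the $1$-jet graph $\{(x,\nabla Z(x),Z(x))\}$ is a $C^\infty$ Legendrian, i.e.\ an $(n-1)$-dimensional complex tangential submanifold of $\SSS$, and it contains $K$ near $\zeta$.
\end{enumerate}
No Whitney extension, no $\log f$ and no flatness are needed. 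The hypothesis $f\in A^\infty(\BB)$ enters only through the smoothness of $\Sigma$, and hence of $Z$ and $\nabla Z$.
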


See also \cite[Theorem 10.7.3 and 10.7.6]{Rudin} for the above theorem. Recall the partition $\mathcal{Z}(p)\cap \SSS=\cup_iM_i$ obtained in Theorem~\ref{zeroset} and note that $\overline{M_i}$  are also semi-algebraic sets. Therefore, it is reasonable to pose the following questions.

\begin{question*}[B]\phantomsection\label{Question B}
Is it possible to deduce that the closure of each Nash component, $\overline{M_i},$ is a peak set for $A^\infty(\BB)$? Is it possible to cover the boundary zeros by finitely many or a proper sequence of compact peak sets for $A^\infty(\BB)?$
\end{question*}

These questions are the main motivation of the present work. As we shall see, it is possible to pick proper coverings  whenever the boundary zero set satisfies additional properties yet at the same time we explicitly construct a polynomial
in $\CC^3$ that provides a negative answer to the question of whether the above finite covering universally hold for all stable polynomials.

The problem is that it is not clear for the moment what is the behaviour of the components $M_i$ near their boundaries; see Figure 1 for possible local illustrations. Furthermore, $\overline{M_i}$ might not be a compact manifold in $\SSS,$ like the boundary zeros of the model polynomials $\pi_m(z):=1-m^{m/2}z_1\cdots z_m,$ $m=1,2,...,n;$ in this case Theorem~\ref{RudinThm} would be enough. See \cite{A.Sola, V} about the cyclicity of the model polynomials $\pi_m$. See also the characterization in \cite{Ziarati} for a more general case. A priori, a complex tangential $C^\infty$ manifold $N\subset \SSS$ may be even dense in the unit sphere.

The key point is that the components $M_i$ form a  partition of the semi-algebraic set $\mathcal{Z}(p)\cap \SSS$ and they are not arbitrary $C^\infty$ complex tangential manifolds in $\SSS.$ Hence, one may apply tools stemming from semi-algebraic geometry. We shall make use of some crucial theorems in the proof of the main results.

The first concerns the characterization of peak sets for $A^\infty(\BB),$ which were proved by J.E. Fornæss and B.S. Henriksen.

\begin{lemma}[{\cite[Lemma 4.1]{Fornaess}}]\label{FornLemma}
If $K\subset \SSS$ is compact and contained in $N\cup M,$ where $N$ and $M$ are $C^\infty$ complex tangential manifolds such that $\dim_\RR N<\dim_\RR M,$ $N\cap M$ open in $N,$ and $K\cap N,$ $K\cap M$ open in $K,$ then $K$ is a peak set for $A^\infty(\BB).$
\end{lemma}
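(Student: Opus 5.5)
The plan is to split $K$ into two compact pieces, one lying in each manifold, and to produce a peak function for each piece with Theorem~\ref{RudinThm}. The peak functions will then be glued: by a product where the pieces overlap, and by a localization near the junction. Set $K_1:=K\setminus M$. Since $K\cap M$ is open in $K$, the set $K_1$ is closed in $K$, hence compact. Since $K\subset N\cup M$, we also have $K_1\subset N$. By Theorem~\ref{RudinThm}, $K_1$ is a peak set for $A^\infty(\BB)$.

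Next, let $W\subset N\cap M$ be an open neighbourhood of $K\cap N\cap M$ inside $N$; this is possible because $N\cap M$ is open in $N$. Let $K_2:=K\setminus V$, where $V$ is a small open neighbourhood of $K_1$ in $\CC^n$ whose trace on $N$ lies in $N\setminus\overline{W'}$ for a slightly smaller $W'\Subset W$.

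We need $K_2$ to be a compact subset of $M$. Points of $K_2\cap N$ lie in $W\subset M$. Points of $K_2\setminus N$ lie in $M$ because $K\subset N\cup M$. Compactness of $K_2$ inside $M$ uses that $K\cap N$ is open in $K$: a sequence in $K\cap M$ can only escape $M$ by accumulating at a point of $K\setminus M\subset N$. Such a point is in $K_1$, so the sequence eventually enters $V$. Thus $\overline{K_2}\subset M$, and Theorem~\ref{RudinThm} applied to $M$ shows that $K_2$ is a peak set. Since $K=K_1\cup K_2$, it remains to show that a union of two $A^\infty$ peak sets with this configuration is again a peak set.

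For the union I would work with the "zero" normalization: $f_j\in A^\infty(\BB)$ with $f_j=0$ on $K_j$ and $\mathrm{Re} f_j>0$ on $\overline{\BB}\setminus K_j$. Then $f=f_1f_2$ vanishes exactly on $K_1\cup K_2$, but only has $f\neq 0$ off $K$, not $\mathrm{Re} f>0$. To fix this, I would choose the peak functions with image in a narrow sector $|\arg f_j|<\pi/4$. The Chaumat--Chollet construction gives functions behaving locally like $1-z_n+o(\|z-\zeta\|)$, in the flat quadratic-type form $\langle z,\zeta\rangle$-based functions. By Bruna--Ortega type estimates, their imaginary parts are controlled by their real parts near the sphere, and they can be composed with a conformal map of the half plane onto a thinner sector. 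The real difficulty is that composing with $w\mapsto w^{1/2}$ destroys $C^\infty$ regularity at $K_j$, so the sector condition must come from the construction itself and not from a root.

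I expect the main obstacle to be exactly this gluing step near the junction $K_1\cap\overline{K_2}$, where the dimension jumps from $\dim N$ to $\dim M$. There $N$ sits inside $M$ locally, because $N\cap M$ is open in $N$ and $\dim N<\dim M$. My first attempt would avoid products near the junction. Near each junction point, $K$ lies locally in the single manifold $M$. So I would cover $K$ by finitely many open sets on each of which $K$ is contained in one $C^\infty$ complex tangential manifold (either $N$ or $M$). Then I would run the Chaumat--Chollet/Rudin construction of Theorem~\ref{RudinThm} with local defining functions patched by a partition of unity and corrected by a $\bar\partial$-solution with $A^\infty$ estimates (Henkin--Kerzman type). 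This works provided the patched defining functions still satisfy the complex tangential first-order condition along $K$. Verifying this compatibility at points of $K\cap N\cap M$, where the two local models overlap, is the step I would check most carefully.
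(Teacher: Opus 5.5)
The paper gives no proof of this lemma. It is imported from Fornæss--Henriksen and used as a black box, so your argument has to stand on its own, and it does not yet.

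First, a fixable slip in the decomposition. With $K_1=K\setminus M$ and $K_2=K\setminus V$ you get $K_1\cup K_2=K\setminus(V\cap M)$. This equals $K$ only if $K\setminus M$ is open in $K$, which fails, for instance, when $K$ contains an arc of $N$ that leaves $M$ through a boundary point of $M$. Take instead $K_1=K\cap\overline V$, with $V\supset K\setminus M$ so small that $K\cap\overline V\subset N$; this is possible because $K\cap N$ is open in $K$. Then $K_2=K\setminus V$ is automatically a compact subset of $M$, and the $W,W'$ apparatus can be dropped.

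The real gap is the gluing step, which is the whole content of the lemma. None of the three mechanisms you propose closes it.

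\emph{The product.} $f_1f_2$ only shows that $K$ is a zero set for $A^\infty(\BB)$. Sibony's example recalled in the paper shows this is strictly weaker than being a peak set. So the hypotheses on how $N$ and $M$ meet must be used in an essential way, and your argument never uses them.

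\emph{The sector normalization.} This is not just a regularity problem; it is impossible. Suppose $f\in A^\infty(\BB)$, $f(\zeta)=0$ and $\mathrm{Re} f>0$ in $\BB$. Hopf's lemma and the Cauchy--Riemann equations on the disc $\lambda\mapsto\lambda\zeta$ give $|\mathrm{Im} f(e^{i\theta}\zeta)|\ge c|\theta|$ for small $|\theta|$, while $\mathrm{Re} f(e^{i\theta}\zeta)=o(\theta)$. Compare $1-\langle z,\zeta\rangle$, which gives $1-e^{i\theta}$. Hence no peak function maps $\overline{\BB}$ into a sector $\{|\arg w|\le\theta_0\}$ with $\theta_0<\pi/2$, and $\mathrm{Im} f_j$ is not controlled by $\mathrm{Re} f_j$.

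\emph{The partition of unity.} Here you test the wrong condition. What must survive patching is infinite-order $\bar\partial$-flatness on $K$ together with $\mathrm{Re}>0$ off $K$, not a first-order complex tangency condition. For $\phi=\chi\phi_N+(1-\chi)\phi_M$ one has $\bar\partial\phi=\chi\bar\partial\phi_N+(1-\chi)\bar\partial\phi_M+(\phi_N-\phi_M)\bar\partial\chi$. With an ordinary real cutoff, the last term is flat on $K$ only if the $N$-model and the $M$-model have the same Taylor series at every point of $K$ where $d\chi\neq0$. Such points lie in $K\cap N\cap M$ whenever $K$ passes from $K_1$ to $K_2$. Nothing in your construction makes the two models agree there; already the growth of their real parts in directions of $M$ normal to $N$ differs in general. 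This compatibility is the lemma itself, not a detail to be checked.

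A proof has to exploit that near $K_1\cap K_2$ the set $K$ lies in the single totally real manifold $N$. One way would be a cutoff that is real on $N$ and extended almost holomorphically off $N$, so that $\bar\partial\chi$ is flat on $K$. That would need to be followed by an actual estimate showing $\mathrm{Re}\,\phi>0$ off $K$, even though $\chi$ is complex-valued there, and only then the Chaumat--Chollet $\bar\partial$-correction. None of this is carried out, so the proposal is a plan whose key step is missing.
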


Applying the lemma above, they proved a sufficient condition of identifying peak sets for $A^\infty(\BB).$

\begin{theorem}[{\cite[Theorem 4.2]{Fornaess}}]\label{ThmF}
If a compact set $K\subset \SSS$ is locally contained in $C^\infty$ complex tangential manifolds in $\SSS$, then $K$ is a peak set for $A^\infty(\BB).$
\end{theorem}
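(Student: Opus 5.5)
The plan is to reduce the local hypothesis to the global configuration of Lemma~\ref{FornLemma}, and to finish with Theorem~\ref{RudinThm}. The reduction is an induction on the number of local pieces.

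\textbf{Step 1 (finite cover).} By compactness of $K$ and the hypothesis, choose finitely many open sets $U_1,\dots,U_r\subset\CC^n$ covering $K$, and $C^\infty$ complex tangential manifolds $N_1,\dots,N_r\subset\SSS$ with $K\cap U_j\subset N_j$. Shrink the $U_j$ slightly, so that the compact pieces $K_j:=K\cap\overline{V_j}$, with $V_j\Subset U_j$, still cover $K$. Each $K_j$ is then a compact subset of $N_j$, hence a peak set for $A^\infty(\BB)$ by Theorem~\ref{RudinThm}. The difficulty is that peak sets for $A^\infty(\BB)$ are not obviously closed under finite unions. A product of peak functions peaks on the intersection, and a product of functions with nonnegative real part can lose positivity of the real part. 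So the pieces must be merged geometrically rather than analytically.

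\textbf{Step 2 (merging two pieces).} Suppose $K'=K_1\cup\dots\cup K_{j}$ is already known to lie in a single $C^\infty$ complex tangential manifold $M$, after shrinking. We add $K_{j+1}\subset N_{j+1}$ as follows.
\begin{itemize}
\item Thicken whichever of $M$, $N_{j+1}$ has smaller dimension. This uses the fact that any complex tangential $C^\infty$ manifold of dimension $<n-1$ locally extends to one of dimension $n-1$: extend by flowing along further complex tangential, totally real directions, i.e.\ solve the Legendrian-type extension problem for the contact structure on $\SSS$.
\item Near the overlap $\overline{V_{j+1}}\cap\bigcup_{i\le j}\overline{V_i}$, the set $K$ lies in both manifolds. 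This is exactly the situation $K\subset N\cup M$ of Lemma~\ref{FornLemma}, with $\dim_\RR N<\dim_\RR M$, $N\cap M$ open in $N$, and $K\cap N$, $K\cap M$ relatively open in $K$.
\item If the dimensions agree, first replace one of the manifolds by a lower-dimensional one containing the relevant part of $K$, or thicken the other, so that the strict inequality holds.
\end{itemize}
I would then verify the openness conditions by choosing the cut-off neighbourhoods $V_i$ generically, so that $K\cap N$ and $K\cap M$ are relatively open in $K$.

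\textbf{Step 3 (conclusion).} After finitely many steps, the whole of $K$ is covered by at most two manifolds in the configuration of Lemma~\ref{FornLemma}. That lemma, or Theorem~\ref{RudinThm} if a single manifold suffices, then gives that $K$ is a peak set for $A^\infty(\BB)$.

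\textbf{Main obstacle.} The hard part is Step 2: controlling the overlaps so that the hypotheses of Lemma~\ref{FornLemma} ($N\cap M$ open in $N$, and the relative openness in $K$) actually hold. Two different complex tangential manifolds containing the same portion of $K$ need not agree on any open set. My first attempt would be to use the local structure from Theorem~\ref{peak contained locally}. On each overlap I would replace the two manifolds by a common $(n-1)$-dimensional complex tangential manifold containing both portions of $K$, constructed by complex tangential interpolation between the two defining graphs with a smooth partition of unity. I would then adjust it by the flow of the contact distribution so that complex tangentiality is preserved.

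If this gluing fails, the fallback is analytic instead of geometric. I would prove that a finite union of two peak sets for $A^\infty(\BB)$, each contained in a complex tangential manifold, is again a peak set. The approach would be to combine logarithms of local peak functions via $\bar\partial$-corrections with $C^\infty$ estimates up to the boundary, in the spirit of Chaumat--Chollet.
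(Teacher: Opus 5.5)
\textbf{There is a genuine gap: the merging step, which carries all the content, is missing, and the induction as set up cannot close.}

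The paper gives no proof of its own. It quotes the statement from Forn\ae ss--Henriksen and remarks only that they obtain it by applying Lemma~\ref{FornLemma}. Your proposal aims at the same reduction, but the reduction itself is what you leave unproved.

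\textbf{The induction does not propagate.} The induction in Step~2 assumes $K_1\cup\dots\cup K_j$ lies in a \emph{single} complex tangential manifold. The tool you use to add $K_{j+1}$, Lemma~\ref{FornLemma}, only concludes that the union is a peak set. It does not return a single manifold, nor a two-manifold configuration you could feed into the next step. So Step~3's claim that everything ends up in at most two manifolds is unsupported: with $r$ pieces you have $r$ manifolds, and the lemma handles exactly two with a dimension gap. Conversely, if your merging really produced one manifold at each stage, Lemma~\ref{FornLemma} would be superfluous and Theorem~\ref{RudinThm} would finish at once. All the content therefore sits in the merging.

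\textbf{The dimension bookkeeping works against you.} Complex tangential manifolds in $\SSS$ have dimension at most $n-1$. Thickening the smaller manifold therefore produces equal dimensions, whereas Lemma~\ref{FornLemma} needs $\dim_\RR N<\dim_\RR M$. When both pieces are $(n-1)$-dimensional you can neither thicken further nor, in general, shrink one. For instance, if near the overlap $K$ contains an open subset of an $(n-1)$-dimensional manifold, it lies in no lower-dimensional one.

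\textbf{The hypothesis ``$N\cap M$ open in $N$'' is never arranged.} Relative openness of $K\cap N$ and $K\cap M$ in $K$ is trivial: cut each manifold down to an open set on which it contains $K$. A generic choice of the $V_i$ does nothing about how the two manifolds meet. Two complex tangential manifolds containing the same part of $K$ may intersect only along $K$, possibly with different tangent spaces at points where $K$ is thin.

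\textbf{Your proposed remedy is the missing theorem itself.} You suggest interpolating the two manifolds with a partition of unity and then correcting by a contact flow. But a convex combination of complex tangential manifolds is not complex tangential, and the correction must preserve both complex tangentiality and the inclusion of $K$. Locally this can be done. In a contact Darboux chart where both manifolds are Legendrian $1$-jet graphs $j^1f_1$ and $j^1f_2$, the graph of $j^1(\chi f_1+(1-\chi)f_2)$ still contains $K$, because $f_1=f_2$ and $df_1=df_2$ over the projection of $K$. However, such a common chart exists only near a point, and patching these local gluings together is again the global problem.

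\textbf{The analytic fallback meets the same obstruction.} Patching local peak data $\psi_j$ with cut-offs $\chi_j$ produces terms like $(\psi_j-\psi_k)\bar\partial\chi_j$. These are not flat on $K$ unless the local data agree to infinite order along $K$, and nothing in your construction ensures this.

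As it stands, the proposal is a plan rather than a proof.
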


\begin{figure*}
\centering
\includegraphics[scale=0.6]{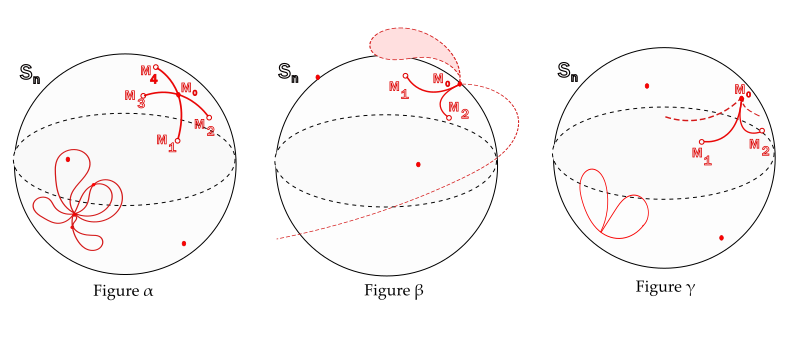}
\caption{Local illustration of possible forms of $\mathcal{Z}(p)\cap \SSS.$ The set $M_0$ represents a component lying in the relative boundary of the components $M_i,$ $i=1,2,3,4.$ Figure $\alpha$ illustrates the case of crossing manifolds which may differ from a union of manifolds in $\SSS.$ Proposition~\ref{prop} says Figure $\beta$ (real analytic extension beyond the unit sphere) cannot occur.  Figure $\gamma$ illustrates a potentially problematic case.}
\label{figure1}
\end{figure*}

Theorem~\ref{peak contained locally} together with Theorem~\ref{ThmF} give a characterization of peak sets: $K\subset\SSS$ is a  peak set for $A^\infty(\BB)$ if and only if it is contained locally in $C^\infty$ complex tangential manifolds lying in $\SSS.$ Let us also not that the condition $K,F\subset \SSS$ peak sets for $A^\infty(\BB)$ does not always imply that $K\cup F$ is a peak set for $A^\infty(\BB).$ Consider the counterexample from \cite{Sibony}: the polynomials $\frac{1}{2}(z^2+w^2+1)$ and $\frac{1}{2}(z^2-w^2+1)$ in $\Ball$ peak on the the sets $\{(\cos\theta,\sin \theta):\theta\in [-\pi,\pi]\}$ and $\{(\cos\theta,i\sin \theta):\theta\in [-\pi,\pi]\},$ respectively, whereas their union cannot be a peak set because violates the characterization of peak sets at the intersection points $(1,0)$ and $(1,0)$.

The second crucial result concerns Nash submanifolds and semi-algebraic sets, and it is known as Wing Lemma; a generalization of the Nash Curve Selection Lemma which is a semi-algebraic consequence of the statements formulated in \cite{Cartan} by F. Bruhat, H. Cartan and \cite{Milnor} by J. Milnor. See also \cite{Kurdyka1} for the injective counterpart of the Nash Curve Selection Lemma.

\begin{theorem}[{\cite[Theorem 9.7.10]{Algebraic}} and \cite{Robson}]\label{Wing lemma}
Let $Y\subset \RR^{2n}$ be Nash submanifold and $S\subset \RR^{2n}$ a semi-algebraic set such that $Y\cap S=\emptyset$ and $Y\subset \overline{S}.$ There exists a finite number of Nash wings
$$w_i:(-1,1)\times U_i\rightarrow \RR^{2n}, \quad i=1,...,k,$$
where $U_1,...,U_k$ are open semi-algebraic subsets of $Y,$ such that $w_i((0,1)\times U_i)$ is contained in $S,$ for $i=1,...,k,$ and $\dim(Y\setminus\cup_{i=1}^{k}U_i)<\dim(Y).$
\end{theorem}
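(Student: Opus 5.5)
The plan is to build the wings from a semi-algebraic choice of points of $S$ approaching each $y\in Y$, parametrized by distance. Then I would use the generic regularity of semi-algebraic maps, together with a Puiseux expansion in the distance variable, to make the choice Nash after a change of variable $t=s^q$. Throughout, let $d=\dim Y$. All removals of bad sets will be of semi-algebraic subsets of $Y$ of dimension $<d$, so the complement stays an open semi-algebraic set whose complement in $Y$ has dimension $<d$. This gives the final dimension condition in the statement.

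\textbf{Step 1 (semi-algebraic section).} For $y\in Y$, consider $D_y=\{|x-y| : x\in S\}\subset(0,\infty)$; this is positive because $Y\cap S=\emptyset$. The family $\{D_y\}$ is semi-algebraic. Since $y\in\overline S$, the set $D_y$ accumulates at $0$. A semi-algebraic subset of $\RR$ is a finite union of points and intervals, so $D_y\supset(0,\varepsilon(y))$ for some $\varepsilon(y)>0$, and $\varepsilon$ can be chosen as a semi-algebraic function of $y$. By semi-algebraic choice (definable choice) there is a semi-algebraic map $\sigma(y,t)$, defined for $y\in Y$ and $0<t<\varepsilon(y)$, with
\begin{equation*}
\sigma(y,t)\in S,\qquad |\sigma(y,t)-y|=t .
\end{equation*}
In particular $\sigma(y,t)\to y$ as $t\to 0$. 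I would also arrange $\sigma$ to be injective in $t$, which is automatic here since distinct $t$ give distinct distances to $y$.

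\textbf{Step 2 (generic Nash regularity and Puiseux in $t$).} This is the main obstacle: I need Nash dependence jointly in $(y,t)$ all the way down to $t=0$, uniformly on an open dense set of $y$. There are two ingredients.

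First, a semi-algebraic map is Nash off a closed semi-algebraic set $B\subset\{(y,t)\}$ of dimension $\le d$. The set of $y$ for which the fibre $B_y$ is infinite has dimension $<d$, and I discard it together with the set where $\varepsilon$ fails to be Nash. On the remaining set I replace $\varepsilon(y)$ by a positive Nash function $\delta(y)<\min(B_y\cup\{\varepsilon(y)\})$; this is possible after shrinking to a dense open set by a stratification argument. Then $\sigma$ is Nash on $\{(y,t): y\in U,\ 0<t<\delta(y)\}$.

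Second, I would apply Puiseux's theorem with parameters, i.e.\ a semi-algebraic version of Newton--Puiseux for each coordinate of $\sigma$ viewed as an algebraic function of $t$ over the field of Nash functions of $y$. Off a further lower-dimensional set this gives a common integer $q$ (constant on each of finitely many connected components of $U$) such that $s\mapsto\sigma(y,s^q)$ extends to a Nash map near $s=0$ with value $y$ at $s=0$. The genericity comes from the finiteness of the possible Newton polygon data. If this uniform Puiseux step is hard to set up directly, I would first do it pointwise and then use the fact that the set of $y$ where the expansion degenerates is semi-algebraic of dimension $<d$.

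\textbf{Step 3 (normalization into wings).} Rescale by setting $w(s,y)=\sigma(y,(\delta(y)s)^q)$ for $s\in[0,1)$, and define $w$ on $(-1,0)$ by the analytic (Nash) continuation of the Puiseux series in $s$. This continuation exists because the series converges on a full disc once the branch $t^{1/q}$ has been fixed. Then $w(0,y)=y$, and $w((0,1)\times U_i)\subset S$ by construction. The homeomorphism-onto-image property holds on the positive side because $|w(s,y)-y|=(\delta(y)s)^q$ separates the values of $s$; on the negative side and near $s=0$ it holds after shrinking $\delta$. The conditions $\partial_t w(t,y)\neq0$ for $t\ne0$ and $\partial_t^{q'}w(0,y)\neq0$, with the lower-order derivatives vanishing, hold off a closed semi-algebraic subset of $(-1,1)\times U$. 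That subset is proper, and after shrinking $\delta$ and removing a lower-dimensional subset of $Y$ it avoids the relevant region. Here $q'$ is the order of the first nonzero Puiseux coefficient, which is again constant on pieces of a finite partition. The finitely many connected components of the resulting open dense set are the $U_i$, and $\dim(Y\setminus\cup U_i)<d$.
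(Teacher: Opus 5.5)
The paper gives no proof of this statement; it is imported from Bochnak--Coste--Roy (Theorem 9.7.10) and Robson, so your argument has to stand on its own. Step 1 and the first half of Step 2 are correct, and the overall route (definable choice parametrized by distance, generic Nash regularity, Puiseux in $t$, rescaling) is reasonable. There are, however, two genuine gaps.

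\textbf{First, your $w$ need not be a homeomorphism onto its image.} A Nash wing must be injective on all of $(-1,1)\times U_i$, but you only separate values of $s$ over a fixed base point. Nothing in Step 1 prevents curves over different base points from meeting. Moreover, $w$ on $(-1,0)\times U_i$ is forced by analytic continuation, so shrinking $\delta$ or $U_i$ cannot repair this.

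Concretely, take $n=1$, $Y=\RR\times\{0\}$ and $S=\{x_2>0\}$. Then $\sigma(y,t)=(y+t\sqrt{1-t^2},\,t^2)$ is an admissible output of Step 1, and it is already Nash at $t=0$, so $q=1$. For every positive $\delta(\cdot)$, your $w$ satisfies $w(s,y)=w(s',y')$ whenever $\tau:=\delta(y)s=-\delta(y')s'>0$ is small and $y'=y+2\tau\sqrt{1-\tau^2}$. Even for fixed $y$, injectivity on $(-1,1)$ fails if the ``common'' $q$ is not the minimal period of the branch: with $q=2$ for a $\sigma$ that is analytic in $t$, one gets $w(-s,y)=w(s,y)$.

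The missing idea is to constrain the selection. Straighten $Y$ by a Nash chart to an open subset of $\RR^d\times\{0\}$ and choose $\sigma(y,t)$ in the fibre $S\cap(\{y\}\times\RR^{N-d})$. This fibre accumulates at $(y,0)$ for all $y$ off a set of dimension $<d$; otherwise $y\mapsto\mathrm{dist}(0,S_y)$ would be bounded below by a positive constant on an open set, contradicting $Y\subset\overline S$. The first $d$ chart coordinates of $w(s,y)$ are then identically $y$, also after continuation, so curves over distinct base points never meet. For fixed $y$, take $q$ minimal. The identity $|w(s,y)-y|=|\delta(y)s|^q$ persists for $s<0$ by analyticity, and it forces the leading exponent to be exactly $q$; minimality gives an odd exponent when $q$ is even. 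Together these yield injectivity in $s$ and continuity of the inverse.

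\textbf{Second, the parametric Puiseux step is asserted rather than proved, and it is the heart of the lemma.} Genericity of Newton-polygon data only controls the formal algorithm. It gives neither a convergence radius locally bounded below in $y$ nor joint Nash regularity of $(y,s)\mapsto\sigma(y,s^q)$ across $s=0$. The fallback ``expand pointwise, then discard the degeneracy set'' is weaker still: pointwise expansions carry no regularity in $y$, and semi-algebraicity of that set is not justified.

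One way to close it is as follows.
\begin{enumerate}
\item For a coordinate $\sigma_j$, take an irreducible $P(y,t,X)$ vanishing on its graph in chart coordinates, with leading coefficient $a_0$, and write $a_0\,\mathrm{disc}_X P=t^m g(y,t)$ with $g(\cdot,0)\not\equiv 0$.
\item Near a real $y_0$ with $g(y_0,0)\neq 0$, the complex roots form an unramified finite covering of $V\times\{0<|t|<r\}$, where $V$ is a small complex polydisc. Hence the sheet through $\sigma_j$ has a finite period, locally constant in $y_0$; let $q$ be the l.c.m.\ of these periods over $j$.
\item After $t=s^q$ the sheet is holomorphic on $V\times\{0<|s|<r^{1/q}\}$, and meromorphic across $s=0$ by the Cauchy bound on roots.
\item It is in fact holomorphic, because it is bounded at real points (since $|\sigma(y,t)-y|=t$). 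This kills the polar Laurent coefficients on a real open set, hence identically.
\item Its coefficients are real, so it is real for real $s<0$; being analytic and algebraic, it is Nash.
\end{enumerate}
A positive semi-algebraic lower bound for $r$, replaced by a Nash minorant on a dense open set, supplies the $\delta$ needed in Step 3. The remaining checks there (conditions (b), (c), and $w((0,1)\times U_i)\subset S$) then go through. Note that your $q'$ automatically equals $q$.
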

    
With these tools in hands, we may obtain explicit descriptions of $\mathcal{Z}(p)\cap \SSS$ in terms of interpolation sets. 

\section{Main Results}\label{sec: Main results}
Set $\alpha=(0',1)=(0,0,...,1)\in \SSS.$ 

\begin{proposition}\label{prop}
Let $p\in\CC[z_1,\dots,z_n]$ be a stable polynomial and $\mathcal{Z}(p)\cap \SSS=\cup_iM_i.$ If $\alpha\in \overline{M_i}\setminus M_i,$ for some intex $i,$ then there exist $\epsilon>0$ and a real analytic curve $\gamma_\alpha:(-\epsilon,\epsilon)\rightarrow \mathcal{Z}(p)\cap \SSS$ which is homeomorphism onto its image and enjoys the following properties:
\\[4pt]
$(i)$ $\gamma_\alpha((0,\epsilon))\subset M_i,$ $\gamma_\alpha(0)=\alpha;$\\[3pt]
$(ii)$ $\gamma_\alpha'(t)\neq 0,$ for all $t\neq0;$\\[3pt]
$(iii)$ there exists unique $k$ such that $\gamma((-\epsilon,0))\subset M_k$;\\[3pt]
$(iv)$ there exists $\delta_0>0$ such that $\gamma((-\epsilon,\epsilon))\cap \Ba(0,\delta)$ has exactly two components meeting at $\alpha,$ for all $\delta<\delta_0;$\\[3pt]
$(v)$ $\gamma_\alpha$ is complex tangential;\\[3pt]
$(vi)$ there exist $\nu>1,$ $b_\nu^1,...,b_\nu^{n-1},a\in \CC,$ such that
$$|b_\nu^1|+...+|b_\nu^{n-1}|>0, \quad a=-\frac{|b_\nu^1|^2+...+|b_\nu^{n-1}|^2}{2},$$
and 
$$\gamma_\alpha(t)=\Big(b^1_\nu t^\nu+o(t^\nu), ...,b^{n-1}_\nu t^\nu+o(t^\nu), 1 +a t^{2\nu}+o(t^{2\nu})\Big),$$
for all $t\in (-\epsilon,\epsilon)$
\end{proposition}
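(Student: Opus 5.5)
Since $\alpha\in\overline{M_i}\setminus M_i$ and the $M_j$ are pairwise disjoint, $\alpha$ lies in some other component $M_k$. Because $\alpha\notin M_i$, the Nash Curve Selection Lemma (the special case of Theorem~\ref{Wing lemma} with $Y=\{\alpha\}$, $S=M_i$) gives a Nash curve $w:(-1,1)\to\RR^{2n}$ that is a homeomorphism onto its image, with $w(0)=\alpha$, $w((0,1))\subset M_i$, $w'(t)\neq0$ for $t\neq0$, and a least order $q$ with $\partial_t^q w(0)\neq 0$. After shrinking to $(-\epsilon,\epsilon)$ I will set $\gamma_\alpha=w$. Since $w$ is real analytic, hence Nash, and the zero set $\mathcal{Z}(p)\cap\SSS$ is semi-algebraic, it contains the whole arc $w((-\epsilon,\epsilon))$. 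Indeed, $|p\circ w|^2$ and $1-\|w\|^2$ are real analytic functions of $t$ that vanish on $(0,\epsilon)$, so they vanish identically. This gives (i) and (ii). The semi-algebraic set $w((-\epsilon,0))$ meets only finitely many $M_j$. By the good-directions/monotonicity lemma for Nash curves, after shrinking $\epsilon$ the set $w((-\epsilon,0))$ lies in a single stratum $M_k$, and this gives (iii). For (iv), I read the ball as $\Ba(\alpha,\delta)$. It then follows from the homeomorphism property together with $|w(t)-\alpha|\sim c|t|^q$: for small $t$ the function $|w(t)-\alpha|$ is strictly monotone on each side of $0$, so the intersection with a small ball is the two half-arcs meeting at $\alpha$.

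For (v), on $(0,\epsilon)$ the curve lies in $M_i$. By Theorem~\ref{zeroset}, $M_i$ is the image of a complex tangential real analytic parametrisation, so $\langle \gamma_\alpha'(t),\gamma_\alpha(t)\rangle=0$ for $t>0$. The left-hand side is real analytic in $t$, so it vanishes on all of $(-\epsilon,\epsilon)$, which is (v).

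For (vi), I will write $\gamma_\alpha(t)=(\gamma'(t),\gamma_n(t))$ with $\gamma'(0)=0'$ and $\gamma_n(0)=1$. Let $\nu$ be the vanishing order of $\gamma'$ at $0$ and $\mu$ that of $\gamma_n-1$. Write $\gamma'(t)=b_\nu t^\nu+o(t^\nu)$ and $\gamma_n(t)=1+ct^\mu+o(t^\mu)$. Complex tangency gives
\[
\overline{\gamma_n}\,\gamma_n'=-\langle \gamma'{}',\gamma'\rangle=-\nu|b_\nu|^2t^{2\nu-1}+o(t^{2\nu-1}),
\]
while $\overline{\gamma_n}\gamma_n'=\mu c t^{\mu-1}+o(t^{\mu-1})$. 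The sphere condition alone, $|\gamma_n|^2=1-\|\gamma'\|^2$, forces $2\mathrm{Re}(ct^\mu)+\dots=-|b_\nu|^2t^{2\nu}$. The main obstacle is to exclude the case where $\gamma'$ vanishes to higher order than $\gamma_n-1$, i.e. where the curve is tangent to the complex normal direction. Complex tangency resolves this. If $\mu<2\nu$, or if $\gamma'\equiv0$, the derivative identity forces $c=0$, a contradiction. In the case $\gamma'\equiv 0$ we would get $\gamma_n$ constant, contradicting injectivity. Hence $b_\nu\neq0$, $\mu=2\nu$, and $2\nu c=-\nu|b_\nu|^2$, i.e. $a=c=-|b_\nu|^2/2$.

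It remains to show $\nu>1$. If $\nu=1$, then $\gamma_\alpha'(0)=(b_1,0)\neq0$, so the curve is smooth and regular through $\alpha$. I would then need to argue that $\alpha$ is not a genuine boundary point of $M_i$, i.e. that a regular analytic complex-tangential arc through $\alpha$ would place $\alpha$ in $M_i$. I expect this is where the real difficulty lies. The plan is to reparametrise the wing (for instance $t\mapsto t^q$ versus the stratification), using that the $M_j$ come from a Nash stratification adapted to $\mathcal{Z}(p)\cap\SSS$, so that $\alpha$ is a singular point of the closure. Alternatively, I would take the wing order $q\ge2$ from item (c) of the wing definition, with $q$ in the role of $\nu$.
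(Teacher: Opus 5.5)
Your arguments for (i)--(v) are correct and track the paper's proof: the Wing Lemma with $Y=\{\alpha\}$, $S=M_i$; the identity principle; finiteness of semi-algebraic subsets of $\RR$ for (iii). So is the leading-order comparison in $\langle\gamma_\alpha'(t),\gamma_\alpha(t)\rangle=0$, which gives $b_\nu\neq0$, $\mu=2\nu$ and $a=-|b_\nu|^2/2$ by the same Taylor comparison as the paper. Your monotonicity argument for (iv), and your derivation of (v) from the complex tangency of the parametrisation of $M_i$ plus analytic continuation, are more direct than the paper's half-branch and (Z)-set/(PI)-set arguments, and they work. (Minor: the stratum containing $\gamma_\alpha((-\epsilon,0))$ need not be the one containing $\alpha$, e.g.\ when $\{\alpha\}$ is a $0$-dimensional stratum, so do not call both $M_k$.)

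The genuine gap is the last claim, $\nu>1$, and neither of your proposed routes can close it. Proving that $\nu=1$ is impossible cannot work, because it is false. For the stable polynomial $1-z_1^2-z_2^2$ the boundary zero set is the real circle, which must be split into open arcs and points, e.g.\ $M_1=$ circle minus $\alpha=(1,0)$ and $M_2=\{\alpha\}$. Then $\alpha\in\overline{M_1}\setminus M_1$, yet $t\mapsto\bigl(\tfrac{1-t^2}{1+t^2},\tfrac{2t}{1+t^2}\bigr)$ is a Nash wing at $\alpha$ with $\nu=1$. The paper's own construction of Nash stratifications, which cuts arcs at interior points, produces such $\alpha$ routinely. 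Reading $q\geq2$ off item (c) of the wing definition fails as well, since (c) allows $q=1$; the paper's proof in fact just asserts $\nu\in\{2,3,\dots\}$ at this point.

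The missing idea is that (vi) only asks for the existence of some curve, so you may reparametrise: put $\tilde\gamma(t):=\gamma_\alpha(t^3)$ on $(-\epsilon^{1/3},\epsilon^{1/3})$. Since $t\mapsto t^3$ is a sign-preserving real analytic homeomorphism, the following all hold:
\begin{itemize}
\item $\tilde\gamma$ is real analytic and a homeomorphism onto the same image;
\item $\tilde\gamma((0,\epsilon^{1/3}))=\gamma_\alpha((0,\epsilon))$ and $\tilde\gamma((-\epsilon^{1/3},0))=\gamma_\alpha((-\epsilon,0))$;
\item $\tilde\gamma'(t)=3t^2\gamma_\alpha'(t^3)\neq0$ for $t\neq0$;
\item $\langle\tilde\gamma'(t),\tilde\gamma(t)\rangle=3t^2\langle\gamma_\alpha'(t^3),\gamma_\alpha(t^3)\rangle=0$.
\end{itemize}
Thus (i)--(v) persist. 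Your expansion becomes $\tilde\gamma(t)=\bigl(b_\nu t^{3\nu}+o(t^{3\nu}),\,1+at^{6\nu}+o(t^{6\nu})\bigr)$ with the same $b_\nu$ and $a$, i.e.\ (vi) holds with $3\nu\geq3$ in place of $\nu$.
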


The proposition above locally describes the $1$-dimensional boundary zeros of stable polynomials. Let us note that for sake of simplicity we have chosen $\alpha=(0',1)$ to be a boundary point of the component $M_i.$ The general case $\alpha\in \overline{M_i}\setminus M_i$ may be achieved by utilizing unitary matrices. Of course, the curve $\gamma_\alpha$ might not be unique. See also \cite{Knese}, where the authors follow a different approach.

Recall that a Nash stratification of a semi-algebraic set $A\subset\RR^{2n}$ is a finite partition $\{A_i\}_i$ for $A,$ where each $A_i$ is a semi-algebraically connected Nash submanifold of $\RR^{2n},$ such that, if $A_i\cap \overline{A_j}\neq \emptyset$ and $i\neq j,$ then $A_i\subset \overline{A_j}.$ The components $A_i$ are called Nash strata, see \cite[Definition 9.1.7]{Algebraic}. 

If $\dim_\RR \mathcal{Z}(p)\cap \SSS=1,$ then the finite decomposition $\{M_i\}_i$ in Theorem~\ref{zeroset} may be chosen to form additionally a Nash stratification. Indeed, consider the decomposition $\mathcal{Z}(p)\cap\SSS=\cup_i M_i$ in Theorem~\ref{zeroset}. Note that $\overline{M_i}\setminus M_i\neq \emptyset$ and $\overline{M_i}\setminus M_i$ is a semi-algebraic set with $\dim_\RR (\overline{M_i}\setminus M_i)<1.$ Thus, $\overline{M_i}\setminus M_i$ is a finite set. For each $\alpha\in \overline{M_i}\setminus M_i,$ pick a curve $\gamma_\alpha:(-1,1)\rightarrow\RR^{2n}$ from Proposition~\ref{prop}. Since $M_i$ is an one dimensional connected Nash submanifold, we deduce that the boundary has at most two distinct points. If it has one point, namely a loop without a point, then we may split it as follows: $M_i=\phi_i((1,0))\cup\{\phi_i(0)\}\cup\phi_i((0,1)),$ where each one dimensional component has exactly two distinct points in the relative boundary.

Let $I_1$ the set of all indexes $i$ such that $\overline{M_1}\cap M_i\neq \emptyset$ and let $i\in I_1\neq\emptyset.$ If $\alpha\in M_1,$ then $M_1\cap M_i\neq \emptyset$ which is impossible. Thus, $\alpha\in \overline{M_1}\setminus M_1.$ Note that the case $M_1=\{\alpha\}$ cannot occur. Whence, there exist two distinct points $\alpha,\beta\in \overline{M_1}\setminus M_1$. Let us suppose that $\alpha,\beta \in M_i.$ Set $\alpha:=\phi_i(t^i_\alpha)$ and $\beta:=\phi_i(t^i_\beta),$ where $M_i=\phi_i((-1,1)),$ and $t^i_\alpha<t^i_\beta,$ $t^i_\alpha,t^i_\beta\in (-1,1).$ Then we may decompose the component $M_i$ as follows: $M_i=\phi_i((-1,t^i_\alpha))\cup\{\phi_i(t^i_\alpha)\}\cup\phi_i((t^i_\alpha,t^i_\beta))\cup\{\phi_i(t^i_\beta)\}\cup \phi_i((t^i_\beta,1)).$ We obtain an analogue decomposition for the simpler case $\alpha\in M_i$ and $\beta\notin M_i.$ Next, if $j\in I_1,$ $j\neq i,$ then the only possible case is $\alpha,\beta\in\overline{M_1}\setminus M_1,$ $\alpha\neq \beta,$ and $\alpha\in M_i,$ $\beta\in M_j.$ Hence, $\mathrm{card}(I_1)\leq2.$ In the case $\mathrm{card}(I_1)=2$ and $\beta\in M_j$, we obtain the decomposition $M_j=\phi_j((-1,t^j_\beta))\cup \{\phi_j(t^j_\beta)\}\cup \phi_j((t_\beta^j,1)).$ Continuing this process inductively on the finite partition $\{M_i\}_i$, one may obtain a Nash stratification, namely,  $\mathcal{Z}(p)\cap \SSS=\cup_iN_i,$ where $N_i$ is a Nash submanifold lying in $\SSS$ and Nash diffeomorphic to  $(-1,1)^{\dim_\RR N_i},$ $\dim_\RR N_i=0,1.$ 

Henceforth, a \emph{Nash stratification} of $\mathcal{Z}(p)\cap \SSS$ refers to a finite partition $\{M_i\}_i$ of $M,$ where $M_i\cap \overline{M_j}\neq \emptyset,$ $i\neq j,$ yields $M_i\subset \overline{M_j},$ and each component $M_i$ is a Nash submanifold lying in $\SSS$ which is diffeomorphic to $(-1,1)^{\dim_\RR M_i},$ $\dim_\RR M_i=0,1,$ by a Nash diffeomorphism that is complex tangential. The relative boundary $\overline{M_i}\setminus M_i$ of each one dimensional component contains exactly two distinct points. The Nash stratification is not unique.

Let us note that in the property $(i)$ of Proposition~\ref{prop}, the index $k$ might be equal to $i$ for $\dim_\RR(M_i)\geq 2.$ If $\dim_\RR\mathcal{Z}(p)\cap \SSS=1$ and $\{M_i\}_i$ forms a Nash stratification, then $k\neq i.$

It is possible that the map $\gamma_\alpha:(-\epsilon,\epsilon)\rightarrow\mathcal{Z}(p)\cap\SSS$ enjoying the properties of Proposition~\ref{prop} yields a $C^\infty$ submanifold lying in $\mathcal{Z}(p)\cap \SSS$, namely, $\gamma_\alpha((-\epsilon,\epsilon))\subset \mathcal{Z}(p)\cap \SSS$ is a $C^\infty$ manifold in $\SSS.$ For instance, for any $\nu=3,5,...,$ the curve $\gamma_{(0,1)}(t)=(t^{\nu},\sqrt{1-t^{2\nu}})$ around zero  yields a complex tangential real analytic submanifold in $\mathbb{S}_2$ although the map is not a diffeomorphism itself (we talk about a homeomorphism on an open interval, hence, it cannot yield a manifold with boundary like the case $\nu=2$). In particular, it is contained in $\mathcal{Z}(1-z^2-w^2)\cap \SSS.$ 

Problematic case is considered to be whenever the closure of an  one dimensional component $\overline{M_i}$ obtained from any Nash stratification  does not lie in one dimensional $C^\infty$ manifolds in $\SSS,$ see Figure 1 ($\beta$ and $\gamma$) for possible illustrations. To work out this problem, let us consider a certain example in algebraic geometry in $\RR^2.$ Let $\gamma(t):=(t^2,t^3),$ $t\in [0,\epsilon),$ and assume that there exists an one dimensional smooth submanifold $\Gamma\subset \RR^2$ such that $\gamma([0,\epsilon))\subset \Gamma.$ Since $\Gamma$ is smooth and $(0,0)\in\Gamma,$ there exist a smooth $\omega:(-\delta,\delta)\rightarrow \Gamma$ such that $\omega(0)=(0,0)$ and $\omega'(0)\neq 0.$ If $\omega(t)=(\omega_1(t),\omega_2(t)),$ then $\omega_1(t)^2=\omega_2(t)^3,$ for all $t\geq 0$ close to $0.$ Differentiating three times, we obtain $\omega_1'(0)=\omega'_2(0)=0,$ which is a contradiction. 

On the other hand, let $\alpha=(0',1)$ and let $\gamma:=\gamma_\alpha=(\gamma_1,..,\gamma_{n-1},\gamma_n)$ be the Nash curve obtained from Proposition~\ref{prop}. Then there exists $\delta>0$ and a $C^1$ complex tangential submanifold $M\subset\SSS$ such that $\gamma([0,\delta))\subset M.$ Indeed, let $\tilde{\gamma}:(0,\epsilon)\rightarrow\CC^n$ be the Nash curve defined by $\tilde{\gamma}=(-\gamma_1,...,-\gamma_{n-1},\gamma_n).$ The nature of $\gamma$ yields that $\gamma((0,\epsilon))\cap\tilde{\gamma}((0,\epsilon))$ has finitely many connected components and near $\alpha$ is empty since $\gamma_i,$ $i=1,...,n-1,$ are not identically zero, thus there exists $\delta>0$ such that $M:=\tilde{\gamma}((0,\delta))\cup\{\alpha\}\cup\gamma((0,\delta))$ is a union of two disjoint real analytic submanifolds (with exactly two points on the boundary) sticking at the unique point $\alpha.$ To show that $M$ is a $C^1$ complex tangential manifold lying in the unit sphere, it is enough to find a $C^1$ chart around the point $\alpha.$ Let $0<\theta<\delta$ and set $\omega:(-\theta,\theta)\rightarrow \CC^n$ by 
$$\omega(t)=(b^1_\nu t+b^1_{\nu+1}t^{1+1/\nu}+..,...,b_{\nu}^{n-1}t+b_{\nu+1}^nt^{1+1/\nu}+..,1+a_{2\nu}t^{2}+a_{2\nu+1}t^{2+1/\nu}+..), \quad t>0,$$
$$\omega(t)=(b^1_\nu t-b^1_{\nu+1}|t|^{1+1/\nu}-..,...,b_{\nu}^{n-1}t-b_{\nu+1}^n|t|^{1+1/\nu}-..,1+a_{2\nu}|t|^{2}+a_{2\nu+1}|t|^{2+1/\nu}+..), \quad t<0,$$
and $\omega(0)=\alpha.$ Then $\lim_{t\rightarrow 0^-}\omega'(t)=(b_\nu^1,...,b_{\nu}^{n-1},0)=\lim_{t\rightarrow 0^-}\omega'(t)\neq 0,$
and $\omega(t)\neq 0,$ for all $t\neq 0.$ Furthermore, $\omega((-\theta,\theta))\subset M$ and $\omega(0)=\alpha\in M,$ ergo $M$ is an one-dimensional $C^1$ complex tangential submanifold lying on $\SSS.$

Hence, we need to check out under which conditions the boundary zeros of a polynomial does not contain such problematic points; either points where submanifolds lying in the boundary zeros cross each other or the union of their closures build a cusp. One might strongly claim in full generality that there exist stable polynomials and  $\alpha\in \overline{M_i}\setminus M_i$ taken from their boundary zeros such that for any $\gamma_\alpha,$ the set $\gamma_\alpha([0,\epsilon))$ is not contained in any one dimensional $C^\infty$ manifold in $\SSS.$ See Figure~\ref{figure1} for possible illustrations of the boundary zeros.

See crucial theory and examples regarding the geometry of singularities in \cite{Bickel}, \cite{Sola1} and \cite{Knese}. See also \cite{Mostowski} about the behaviour of a germ of an analytic curve close to its boundary.

All in all, it is reasonable to formulate the following definitions.

\begin{definition}
Let $p\in \CC[z_1,...,z_n]$ be a stable polynomial and let $\mathcal{Z}(p)\cap \SSS=\cup_iM_i.$ We say that a point $\alpha\in \overline{M_i}\setminus M_i$ is regular if there exists a map $\gamma_\alpha:(-\epsilon,\epsilon)\rightarrow\mathcal{Z}(p)\cap\SSS$ that enjoys the properties of Proposition~\ref{prop} and a $C^\infty$ complex tangential submanifold $N$ lying in $\SSS$ such that $\gamma_\alpha([0,\epsilon))\subset N.$ We say that the point $\alpha$ is strong regular if $N$ is an one dimensional complex tangential $C^\infty$ submanifold.
\end{definition}

Let us note that the condition $\gamma_\alpha([0,\epsilon))\subset N$ for a $C^\infty$ complex tangential submanifold is more general than assuming $\dim_\RR N=1;$ which is the case in the $\Ball$ setting. For $n\geq 3,$ this definition allows the case where $\gamma_\alpha([0,\epsilon))$ is not contained in any one dimensional smooth complex tangential manifold whereas $\alpha$ is not a problematic point in terms of interpolations sets.  Hence, we may specify the notion of regularities (resp. singularities) for the case $\dim_\RR(\mathcal{Z}(p)\cap \SSS)=1.$

\begin{definition}\label{def}
Let $p\in \CC[z_1,...,z_n]$ be a stable polynomial and assume that $\dim_\RR(\mathcal{Z}(p)\cap \SSS)=1.$ Pick a Nash stratification $\mathcal{Z}(p)\cap \SSS=\cup_i M_i.$ We say that $\mathcal{Z}(p)\cap\SSS$ has an essential singularity at the accumulation point $\alpha\in \overline{M_i}\setminus M_i$ if $\alpha$ is not regular with respect to $M_i.$ We say that $\mathcal{Z}(p)\cap\SSS$ has a weak essential singularity at the accumulation point $\alpha\in \overline{M_i}\setminus M_i$ if $\alpha$ is not strong regular with respect to $M_i.$ Furthermore, we say that $\mathcal{Z}(p)\cap\SSS$ has no essential singularities (resp. weak) if it has no essential singularity (resp. weak) at any $\alpha\in \overline{M_i}\setminus M_i,$ for all $i.$ 
\end{definition}

We shall deal with the question of whether polynomials possessing weak essential singularities exist. For the setting $n\geq 3$ such polynomials indeed exist. The idea is to pick a polynomial that is a sum of suitable polynomials. Consider the polynomial 
$$\pi(z):=1-z_1^2-z_2^2-z_3^2+1/13(z_2^2-z_1^3)^2,$$
$z=(z_1,z_2,z_3).$ Set $z=x+iy,$ where $z_i=x_i+iy_i,$ and $\sigma(z_1,z_2):=z_2^3-z_1^2.$ Then $\textrm{Im }\sigma(z_1,z_2)=2x_2y_2+y_1(y_1^2-3x_1^2).$ Note that $||z_i||\leq1,$ for $i=1,2,3.$ By  $x_1^2+y_1^2\leq 1,$ we obtain $|y_1^2-3x_1^2|\leq 3.$ Furthermore, Cauchy-Schwarz inequality yields $|\textrm{Im }\sigma(z_1,z_2)|\leq \sqrt{13(|y_1|^2+|y_2|^2)}.$ We have 
\begin{align*}
\textrm{Re }\pi(z)&=1-||x||^2+||y||^2+1/13[(\textrm{Re }\sigma(z))^2-(\textrm{Im }\sigma(z))^2]
\geq 1-||x||^2+1/13(\textrm{Re }\sigma(z))^2
\end{align*}
which is positive for $||z||<1.$ This proves that $\pi$ is stable. In particular, if $||z||=1$ and $\pi(z)=0,$ then $||x||=1,$ $y=0$ and $\textrm{Re }\sigma(z)=0,$ from which follows that $1-z_1^2-z_2^2-z_3^2=0$ and $\sigma(z)=0.$ Whence, $\mathcal{Z}(\pi)\cap \mathbb S_3\subset \{z\in \CC^3:\textrm{Im }z=0,||z||=1,\sigma(z)=0\}$ which is $1$-dimensional near to the point $\alpha:=(0,0,1).$ Let $\gamma(t):=(t^2,t^3,\sqrt{1-t^4-t^6}),$ for $t$ close to $0.$ Note that $\gamma$ is a Nash curve, $||\gamma(t)||=1,$ $\gamma(0)=\alpha,$ and $\pi(\gamma(t))=0,$ for all $t$ close to $0.$  Last, for any $\epsilon>0$ the portion $\gamma([0,\epsilon))$ is not contained in any one dimensional $C^\infty$ submanifold lying in $\mathbb {S}_3,$ hence, $\alpha$ is a weak essential singularity for a component of  $\mathcal{Z}(\pi)\cap \mathbb S_3.$ On the other hand, $\alpha$ is not an essential singularity as $\gamma_\alpha([0,\epsilon))$ lies on a $2$-dimensional sphere. The shape of $\mathcal{Z}(\pi)\cap \mathbb S_3$ looks like the boundary of a nicely cut grapefruit rind. The above definition on weak essential singularities and this example suggest that the covering framework is inherently insufficient to fully characterize the cyclicity of any stable polynomial in the setting $n\geq 3$. For the case $n=2$, one might reasonably assert that such singularities do not exist, which would completely characterize the boundary zeros of stable polynomials. The idea is to apply theory coming up either in \cite{KV} concerning the Weierstrass preparation theorem and Puiseux's Parametrization theorem or in \cite{Knese} (moving the problem from the sphere by a biholomorphic change of coordinates to another setting).  We leave this open problem for near future study.

In light of the Proposition~\ref{prop} and the Definition~\ref{def}, we may answer to the \hyperref[Question B]{Question (B)} and give an explicit description of $\mathcal{Z}(p)\cap\SSS$. We reach a characterization of the boundary zeros in terms of interpolation sets.

\begin{theorem}\label{Character}
Let $p\in\CC[z_1,\dots,z_n]$ be a stable polynomial such that $\dim_\RR\mathcal{Z}(p)\cap\SSS=1$ and consider a Nash stratification $\mathcal{Z}(p)\cap \SSS=\cup_iM_i.$ The set $\mathcal{Z}(p)\cap\SSS$ has no essential singularities if and only if each $\overline{M_i}$ is a peak set for $A^\infty(\BB).$
\end{theorem}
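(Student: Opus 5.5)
The equivalence is proved one direction at a time, using the characterization of peak sets for $A^\infty(\BB)$ from Theorem~\ref{peak contained locally} together with Theorem~\ref{ThmF}: a compact $K\subset\SSS$ is a peak set if and only if it is locally contained in $C^\infty$ complex tangential manifolds in $\SSS$. Zero-dimensional strata $M_i$ are single points, and a point lies in a small complex tangential $C^\infty$ disc (for instance a real curve through it orthogonal to $\zeta$), so it is trivially a peak set. Hence only one-dimensional strata need attention. For such an $M_i$ we have $\overline{M_i}=M_i\cup\{\alpha,\beta\}$, since the relative boundary consists of exactly two points.

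\emph{($\Rightarrow$).} Assume there are no essential singularities, and fix a one-dimensional $M_i$. Near each point of $M_i$, the set $\overline{M_i}$ is locally contained in $M_i$ itself. Indeed, $M_i$ is a real analytic complex tangential curve (Theorem~\ref{zeroset}), and a small open arc of it is a $C^\infty$ complex tangential manifold. Moreover, a small ball around an interior point meets $\overline{M_i}$ only in that arc, because the boundary points are at positive distance. At an endpoint $\alpha\in\overline{M_i}\setminus M_i$, regularity provides $\gamma_\alpha$ as in Proposition~\ref{prop} and a $C^\infty$ complex tangential $N$ with $\gamma_\alpha([0,\epsilon))\subset N$, where $\gamma_\alpha((0,\epsilon))\subset M_i$. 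Since $M_i$ is a one-dimensional Nash arc, the germ of $\overline{M_i}$ at $\alpha$ is exactly $\gamma_\alpha([0,\epsilon))$: a small ball $\Ba(\alpha,\delta)$ meets $\overline{M_i}$ only in the end of the arc near $\alpha$, and this end coincides with the half-branch $\gamma_\alpha((0,\epsilon'))$ by property (iv) and injectivity of the Nash diffeomorphism. Therefore $\overline{M_i}\cap \Ba(\alpha,\delta)\subset N$. Theorem~\ref{ThmF} then shows that $\overline{M_i}$ is a peak set.

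\emph{($\Leftarrow$).} Suppose each $\overline{M_i}$ is a peak set, and let $\alpha\in\overline{M_i}\setminus M_i$. By Theorem~\ref{peak contained locally}, there is a neighborhood $U$ of $\alpha$ and a $C^\infty$ complex tangential manifold $N$ (of dimension $n-1$) with $\overline{M_i}\cap U\subset N$. Take $\gamma_\alpha$ from Proposition~\ref{prop} with $\gamma_\alpha((0,\epsilon))\subset M_i$ and $\gamma_\alpha(0)=\alpha$. After shrinking $\epsilon$, we get $\gamma_\alpha([0,\epsilon))\subset\overline{M_i}\cap U\subset N$, so $\alpha$ is regular. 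As $\alpha$ and $i$ are arbitrary, there are no essential singularities.

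The main obstacle is the identification, in the forward direction, of the local germ of $\overline{M_i}$ at an endpoint with the single half-branch $\gamma_\alpha([0,\epsilon))$. Regularity is asserted only for one curve $\gamma_\alpha$, while the peak set criterion needs all of $\overline{M_i}$ near $\alpha$ inside $N$. My plan is to use that $M_i=\phi_i((-1,1))$ with $\phi_i$ a homeomorphism onto its image, that $\alpha$ is a limit of exactly one end of $M_i$, and that the other endpoint $\beta\neq\alpha$ lies at positive distance. I would also apply the Nash curve selection lemma (Theorem~\ref{Wing lemma} with $Y=\{\alpha\}$) to argue that the end of $M_i$ near $\alpha$ is a single Nash half-branch, which must coincide with $\gamma_\alpha((0,\epsilon'))$ since both lie in the one-dimensional arc $M_i$ and accumulate at $\alpha$.
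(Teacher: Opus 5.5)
Your proof is correct and follows the paper's argument. The forward direction goes through local containment and Forn{\ae}ss--Henriksen (Theorem~\ref{ThmF}), and the converse through Chaumat's local containment (Theorem~\ref{peak contained locally}) near each endpoint $\alpha$. The differences are minor: you apply Theorem~\ref{peak contained locally} directly to $\overline{M_i}$ instead of first passing to the compact subset $\gamma_\alpha([0,\epsilon])$ via Chaumat's hereditary result, and you justify the germ inclusion $\overline{M_i}\cap U\subset\gamma_\alpha([0,\epsilon))$ (via injectivity of $\phi_i$ and $\alpha\neq\beta$), which the paper only asserts.
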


In cyclicity problem of polynomials, we shall make use of the theorem below.

\begin{theorem}\label{theoremm}
Let $p\in\CC[z_1,\dots,z_n]$ be a stable polynomial such that $\dim_\RR\mathcal{Z}(p)\cap\SSS=1$ and consider a Nash stratification $\mathcal{Z}(p)\cap \SSS=\cup_iM_i.$ If $\mathcal{Z}(p)\cap \SSS$ has no weak essential singularities, then there exists finitely many peak sets $K_j$ for $A^\infty(\BB)$ such that $\mathcal{Z}(p)\cap \SSS=\cup_jK_j.$ Each one of them is contained in an one dimensional complex tangential $C^\infty$ submanifold in $\SSS.$
\end{theorem}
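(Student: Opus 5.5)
The plan is to cover $\mathcal{Z}(p)\cap\SSS$ component by component, using the finitely many one dimensional strata $M_i$ together with the finitely many zero dimensional strata. Since $\dim_\RR\mathcal{Z}(p)\cap\SSS=1$, every zero dimensional stratum is either an isolated point of $\mathcal{Z}(p)\cap\SSS$ or lies in $\overline{M_i}\setminus M_i$ for some one dimensional $M_i$. An isolated point $\{\zeta\}$ is trivially contained in a one dimensional complex tangential $C^\infty$ submanifold, for instance a small arc of a complex tangential great circle through $\zeta$. So each such point is a peak set by Theorem~\ref{RudinThm}, and I take it as one of the $K_j$. It therefore suffices to show that, for each one dimensional stratum $M_i$, the closure $\overline{M_i}$ is a finite union of compact sets, each lying in a one dimensional complex tangential $C^\infty$ submanifold of $\SSS$. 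Theorem~\ref{RudinThm} then makes each of them a peak set for $A^\infty(\BB)$.

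Fix a one dimensional $M_i=\phi_i((-1,1))$. It has exactly two boundary points $\alpha,\beta\in\overline{M_i}\setminus M_i$. Since $\mathcal{Z}(p)\cap\SSS$ has no weak essential singularities, $\alpha$ is strong regular with respect to $M_i$. Hence there are a curve $\gamma_\alpha$ as in Proposition~\ref{prop}, with $\gamma_\alpha((0,\epsilon))\subset M_i$, and a one dimensional complex tangential $C^\infty$ submanifold $N_\alpha\subset\SSS$ with $\gamma_\alpha([0,\epsilon))\subset N_\alpha$. The same holds for $\beta$, giving $\gamma_\beta$ and $N_\beta$. Because $\gamma_\alpha$ is a homeomorphism onto its image and lies in $M_i$ for $t>0$, and $M_i$ is a Nash arc accumulating at $\alpha$ from only one end, $\gamma_\alpha((0,\epsilon))$ is an end segment $\phi_i((s_\alpha,1))$ (or $\phi_i((-1,s_\alpha))$) of $M_i$. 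I expect this to be the main obstacle. One has to use properties (iii)--(iv) of Proposition~\ref{prop} and the fact that $M_i\cap\Ba(\alpha,\delta)$ has only finitely many semi-algebraic components to see that near $\alpha$ the stratum $M_i$ consists of exactly one branch. That branch must then coincide with $\gamma_\alpha((0,\epsilon'))$ after shrinking, so that the whole germ of $\overline{M_i}$ at $\alpha$ lies in $N_\alpha$.

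With the ends handled, choose parameters $-1<s_\beta<s_1<s_2<s_\alpha<1$ with $\phi_i((s_\alpha,1))\subset N_\alpha$ and $\phi_i((-1,s_\beta))\subset N_\beta$ (orienting so that $\alpha$ is the limit at $1$). I then set
$$K^i_1=\{\alpha\}\cup\phi_i([s_2,1)),\qquad K^i_2=\phi_i([s_1,s_2]),\qquad K^i_3=\{\beta\}\cup\phi_i((-1,s_1]).$$
Each of these is compact, and their union is $\overline{M_i}$. The middle piece $K^i_2$ lies in the one dimensional complex tangential real analytic manifold $\phi_i((-1,1))=M_i$, which is complex tangential by Theorem~\ref{zeroset}. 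For $K^i_1$, taking $s_2$ close enough to $1$ puts it inside $N_\alpha$, and likewise $K^i_3\subset N_\beta$ for $s_1$ close enough to $-1$. If instead $s_2$ is not close to $1$, I refine the partition by adding more middle pieces, so that each end piece lies in its $N$.

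Finally, I take the $K_j$ to be all the sets $K^i_1,K^i_2,K^i_3$ over the finitely many one dimensional strata, together with the isolated points. This gives finitely many compact sets whose union is $\mathcal{Z}(p)\cap\SSS$. Each of them lies in a one dimensional complex tangential $C^\infty$ submanifold of $\SSS$, so each is a peak set for $A^\infty(\BB)$ by Theorem~\ref{RudinThm}, which is exactly the statement of Theorem~\ref{theoremm}.
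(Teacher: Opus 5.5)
Your proposal is correct and follows the paper's proof essentially verbatim. You split each $\overline{M_i}$ into two end arcs lying in $N_\alpha$ and $N_\beta$ plus a compact middle arc in $M_i$, add the isolated points, and apply Theorem~\ref{RudinThm}; you are more careful than the paper, which uses without comment that the germ of $\overline{M_i}$ at $\alpha$ lies in $\gamma_\alpha([0,\epsilon))$. That step closes quickly: $\phi_i$ extends continuously to $[-1,1]$ with $\phi_i(1)=\alpha\neq\beta=\phi_i(-1)$, so $\phi_i^{-1}\circ\gamma_\alpha$ is continuous and injective, hence monotone, on $(0,\epsilon)$ and tends to $1$, so its image is some $(s_\alpha,1)$. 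Also, your inequalities are reversed: you need $s_1<s_\beta$ and $s_2>s_\alpha$, and then no refinement of the middle piece is needed.
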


The proposition below examines the case where the zero set of a stable polynomial coincide locally (points on the unit sphere) with the graph of a holomorphic function; we state it in terms of its Weierstrass form. Let us denote the reduced variable by $z':=(z_1,...,z_{n-1}).$ 

\begin{proposition}\label{prop graph}
Let $p\in\CC[z_1,\dots,z_n]$ be a stable polynomial with $\mathcal{Z}(p)\cap \SSS=\cup_iM_i.$ Assume that for each  $\alpha\in \mathcal{Z}(p)\cap \SSS,$ there exists a polydisk $\mathrm{P}(\alpha,\delta)$ such that $p(z)=h(z)(z_n-a(z')),$ where $h\in \mathrm{Hol}(\mathrm{P}(\alpha,\delta))$ is zero-free and $a\in\mathrm{Hol}(\Pol(\alpha',\delta)).$ Then $\overline{M_i}$ is contained locally in real analytic totally real submanifolds lying $\SSS.$ Furthermore, if $\dim_\RR M_i=n-1,$ then $\overline{M_i}$ is a peak set for $A^\infty(\BB).$
\end{proposition}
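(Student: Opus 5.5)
Near a boundary zero, the plan is to turn the local graph description into a single real-analytic equation on a real ambient manifold of dimension $2n-2$, and then show that its zero set on the sphere sits inside a totally real, complex tangential submanifold. Fix $\alpha\in\overline{M_i}$; since $\mathcal{Z}(p)\cap\SSS$ is closed, $\alpha\in\mathcal{Z}(p)\cap\SSS$. By hypothesis, near $\alpha$ we have $\mathcal{Z}(p)=\{z_n=a(z')\}$, so
$$\mathcal{Z}(p)\cap\SSS\cap \Pol(\alpha,\delta)=\{(z',a(z')): \rho(z'):=\|z'\|^2+|a(z')|^2-1=0\}.$$
Stability says the graph of $a$ avoids $\BB$, so $\rho\ge 0$ on $\Pol(\alpha',\delta)$. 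Thus the boundary zeros are, near $\alpha$, the image under the real-analytic embedding $z'\mapsto(z',a(z'))$ of the set of minimum points of the nonnegative real-analytic function $\rho$. There, $d\rho=0$, i.e.
$$\bar z_j+\overline{a(z')}\,\partial_j a(z')=0,\qquad j=1,\dots,n-1.$$

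The first key step is to use this critical-point equation. It is exactly the statement that the holomorphic tangent vectors $(e_j,\partial_ja)$ of the graph are orthogonal to $z=(z',a(z'))$. In other words, at boundary zeros the complex tangent space of $\mathcal{Z}(p)$ coincides with $T^{\CC}_z(\SSS)$. I will then consider the real-analytic map $F(z')=\bar z'+\overline{a(z')}\,\overline{\nabla a(z')}$, viewed as $\RR^{2n-2}\to\RR^{2n-2}$. Its zero set $\Sigma$ contains the local boundary zeros, so locally the boundary zeros lie in the real-analytic set $\{\rho=0\}\subset\Sigma$.

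The second step, which I expect to be the main obstacle, is to place $\{\rho=0\}$ inside a real-analytic totally real submanifold rather than merely a real-analytic set. My first attempt is to compute the real Jacobian of $F$. It has the form $v\mapsto \bar v+\overline{\partial(a\nabla a)\,v}$, where the map $v\mapsto \overline{H v}$ is antilinear and $H=\nabla a\,\nabla a^{T}+aD^2a$ is complex symmetric. The kernel of this Jacobian consists of the $v$ with $v=-\overline{Hv}$, a conjugation-type condition, and such a kernel is a totally real subspace. I would then invoke a real-analytic constant-rank or Morse--Bott type argument: since $\rho\ge0$ vanishes on its minimum set, the Hessian of $\rho$ is positive semidefinite there. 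Let $S$ be the locus where the rank of the Hessian is minimal near $\alpha$. I would use a Łojasiewicz-type argument to show that the minimum set of $\rho$ near $\alpha$ lies in the real-analytic submanifold obtained from the nondegenerate directions, namely the zero set of $d\rho$ restricted to a real-analytic complement of the kernel distribution. Pushed forward by the graph map, this gives a real-analytic manifold whose tangent vectors are in the kernel, hence totally real, and complex tangential by the first step. If the Hessian rank varies, I would fall back on stratifying by rank and working on the top stratum, then passing to the closure using the Wing Lemma (Theorem~\ref{Wing lemma}).

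For the second claim, let $\dim_\RR M_i=n-1$. A totally real submanifold of $\CC^n$ lying in $\SSS$ and containing an $(n-1)$-dimensional piece of $M_i$ can be shrunk near each point of $\overline{M_i}$ to an $(n-1)$-dimensional real-analytic manifold $N_\alpha$ containing $\overline{M_i}\cap U_\alpha$. Indeed, $(n-1)$-dimensional analytic germs containing an open set of $M_i$ agree with the analytic continuation of $M_i$ itself. Any manifold in $\SSS$ whose tangent spaces sit in the complex tangent spaces is complex tangential, and $N_\alpha$ has this property by the first step. Hence $\overline{M_i}$ is locally contained in $C^\infty$ complex tangential manifolds in $\SSS$, and Theorem~\ref{ThmF} shows that $\overline{M_i}$ is a peak set for $A^\infty(\BB)$.
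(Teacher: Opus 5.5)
The proposal has two genuine gaps. Your first step, $T_z\mathcal{Z}(p)=T^{\CC}_z(\SSS)$ at boundary zeros, is correct, and it is exactly what repairs the second gap.

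\textbf{The totally real containment is not established.} This is the step you flag as the main obstacle. The paper does not prove it either; it invokes the Harvey--Wells theorem \cite{HW}: the zero set of a nonnegative real-analytic strictly plurisubharmonic function is locally contained in a real-analytic totally real submanifold.

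Your attempt does not work as written. Since $\partial_j\rho=\bar z_j+\overline{a}\,\partial_j a$, your map $F=\overline{z'+a\nabla a}$ has the wrong conjugation. Its zero set is the complex-analytic set $\{z'+a\nabla a=0\}$, not the critical set. For instance, for $p=1-z_1^2-z_2^2$ near $(0,1)$ one has $a\,a'=-z_1$, so $F\equiv 0$. Moreover, $\bar v+\overline{Hv}=0$ means $v=-Hv$, which defines a \emph{complex} subspace (all of $\CC$ in that example), not a totally real one.

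Finally, a ``kernel distribution'' presupposes constant Hessian rank. Your fallback (stratify by rank, work on the top stratum, pass to the closure with the Wing Lemma) cannot place a whole neighbourhood of a rank-drop point of $\{\rho=0\}$ inside a single submanifold, which is what the statement requires.

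None of this machinery is needed. Let $Q$ be the real Hessian of $\rho$ at the single point $\alpha'$. Then $Q(v,v)+Q(iv,iv)=4\sum_{j,k}(\partial^2\rho/\partial z_j\partial\bar z_k)\,v_j\bar v_k>0$ for $v\neq0$, so $\ker Q$ is totally real. Set $W:=Q(\RR^{2n-2})$ and $G:=\pi_W\circ\nabla_{\RR}\rho$. The differential $dG(\alpha')$ maps onto $W$ with kernel $\ker Q$. Hence $\{G=0\}$ is, near $\alpha'$, a real-analytic submanifold with tangent space $\ker Q$ at $\alpha'$, and therefore totally real nearby. It contains every critical point of $\rho$, hence $\{\rho=0\}$.

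\textbf{The manifold is not shown to lie in $\SSS$.} The image $N$ of that manifold under $z'\mapsto(z',a(z'))$ lies in $\mathcal{Z}(p)$, but it need not lie in $\SSS$: points with $\rho>0$ land outside $\overline{\BB}$. The statement asks for submanifolds lying in $\SSS$. Your first step only controls tangent spaces at points of $\mathcal{Z}(p)\cap\SSS$, so ``complex tangential by the first step'' is unjustified for $N$.

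The paper fills this with the normalisation $f/\|f\|$ and a transversality dichotomy. Your first step gives a shorter fix. Since $T_\alpha N\subset T_\alpha\mathcal{Z}(p)=T^{\CC}_\alpha(\SSS)$, the radial projection $z\mapsto z/\|z\|$ restricted to $N$ has differential equal to the identity on $T_\alpha N$. It is therefore a real-analytic embedding near $\alpha$, and it fixes $N\cap\SSS\supset\overline{M_i}\cap U$ for a small neighbourhood $U$ of $\alpha$. Its image lies in $\SSS$ and is totally real near $\alpha$. In particular, only the paper's second case ever occurs.

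For the peak-set claim, do not rely on complex tangency of these projected manifolds; it is not available in general. Argue as the paper does, which is what your ``analytic continuation'' sentence should say.
\begin{itemize}
\item If $\dim_\RR M_i=n-1$, then $\dim N\le n-1$, since $N$ is the graph of a totally real submanifold of $\CC^{n-1}$.
\item Also $N\supset M_i\cap\Ba(\alpha,\epsilon)$, so $\dim N=n-1$, and this piece is open in $N$ by invariance of domain.
\item After shrinking $N$ to a connected neighbourhood of $\alpha$, the identity principle forces $\|z\|^2-1$ to vanish on $N$, so $N\subset\mathcal{Z}(p)\cap\SSS$.
\end{itemize}
Now your first step does make $N$ complex tangential at every point (the paper uses the (Z)-set property here instead), and Theorem~\ref{ThmF} applies.
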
   

Let us note that, if in the above statement we additionally assume that for each $i,$ for each $\alpha\in \overline{M_i},$ and for each open neighbourhood $U$ of $\alpha,$ we have $\dim_\RR M_i\cap U=n-1,$ then Proposition~\ref{prop graph} becomes a characterization of $\mathcal{Z}(p)\cap \SSS$.

For $n=2,$ we may say more things due to the maximal dimension. Proposition~\ref{prop graph} together with Theorem~\ref{Character} immediately yield a characterization of the boundary zeros of stable polynomials whose the local zero set around points on the unit sphere is the graph of a holomorphic function, see also \cite{Knese}.

\begin{theorem}\label{n=2 characterization zero set graph}
Let $p\in\CC[z_1,z_2]$ be a stable polynomial with Nash stratification $\mathcal{Z}(p)\cap \mathbb S_2=\cup_iM_i.$ Assume that for each  $\alpha\in \mathcal{Z}(p)\cap \mathbb S_2,$ there exists a polydisk $\mathrm{P}(\alpha,\delta)$ such that $p(z)=h(z)(z_2-a(z_1)),$ where $h\in \mathrm{Hol}(\mathrm{P}(\alpha,\delta))$ is zero-free and $a\in\mathrm{Hol}(\Pol(\alpha',\delta)).$ Then $\mathcal{Z}(p)\cap \mathbb S_2$ has no weak essential singularities and each $\overline{M_i}$ is a peak set for $A^\infty(\BB)$.
\end{theorem}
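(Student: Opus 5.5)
We need to prove Theorem~\ref{n=2 characterization zero set graph}. For $n=2$, Theorem~\ref{zeroset} gives $\dim_\RR \mathcal{Z}(p)\cap\mathbb S_2\le 1$. If the boundary zero set is finite, the conclusion is immediate: there are no accumulation points, and finite sets are peak sets, since each point is contained in a complex tangential curve. So assume $\dim_\RR=1$ and fix a Nash stratification $\cup_i M_i$. The plan is to prove the absence of weak essential singularities and then invoke Theorem~\ref{Character}. Since for $n=2$ every complex tangential $C^\infty$ submanifold of $\mathbb S_2$ has dimension at most $n-1=1$, regular and strong regular coincide. So "no weak essential singularities" is the same as "no essential singularities", and Theorem~\ref{Character} then gives that each $\overline{M_i}$ is a peak set for $A^\infty(\mathbb B_2)$.

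Fix $\alpha\in\overline{M_i}\setminus M_i$; after a unitary change of coordinates we may take $\alpha=(0,1)$. We must produce a curve $\gamma_\alpha$ as in Proposition~\ref{prop} whose half $\gamma_\alpha([0,\epsilon))$ lies in a one-dimensional $C^\infty$ complex tangential submanifold. Here we use the graph hypothesis and Proposition~\ref{prop graph}. Near $\alpha$, $\mathcal{Z}(p)=\{z_2=a(z_1)\}$ is a smooth complex curve $V$. The proof of Proposition~\ref{prop graph} should show that $\mathcal{Z}(p)\cap\mathbb S_2\cap \Pol(\alpha,\delta)$ lies in a real analytic totally real submanifold $N\subset\mathbb S_2$. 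I would construct $N$ directly as follows. Parametrize $V$ by $z_1\mapsto (z_1,a(z_1))$. Stability means $\rho(z_1):=|z_1|^2+|a(z_1)|^2-1\ge 0$ near $\alpha'=0$, and the boundary zeros correspond to the zero set of the real analytic function $\rho$, which is exactly its minimum set. The function $\rho$ is subharmonic; in fact $\Delta\rho>0$ because of the $|z_1|^2$ term. At a minimum point $\nabla\rho=0$ and the real Hessian is positive semidefinite with positive trace, so it has a positive eigenvalue. By the Morse–Bott–type splitting lemma, or more simply by the implicit function theorem applied to $\partial\rho/\partial v$ in the direction $v$ of a positive Hessian eigenvalue, the zero set of $\rho$ lies in the real analytic curve $\{\partial_v\rho=0\}$, which is smooth since $\partial_v^2\rho>0$. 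Its image $N$ under the graph map is a real analytic embedded curve in $V$. Moreover $\mathcal{Z}(\rho)\subset N$.

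Next we must make sure the chosen curve is complex tangential and lies in the sphere; this is the main obstacle. The curve $\{\partial_v\rho=0\}$ need not lie in $\mathbb S_2$, since only its subset $\mathcal{Z}(\rho)$ does. However, $\mathcal{Z}(\rho)$ is one-dimensional near $\alpha$ (as $\alpha$ is an accumulation point of $M_i$), and it is a semi-algebraic subset of a real analytic connected curve arc through $\alpha$. By analyticity of $\rho$ restricted to that arc (a real analytic function of one variable vanishing on a set accumulating at $\alpha$), $\rho$ vanishes on the whole arc near $\alpha$. So a full open arc $N\ni\alpha$ lies in $\mathcal{Z}(p)\cap\mathbb S_2$. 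The arc is real analytic and embedded, and it is complex tangential: it lies in a complex curve inside the sphere, and any real curve in $\mathbb S_2\cap V$ has tangent in $T^\CC$, because $d\rho=0$ along it, which is exactly the complex tangency computation $\mathrm{Re}\langle \gamma',\gamma\rangle=0$ and $\mathrm{Im}\langle\gamma',\gamma\rangle=0$ obtained from the holomorphic parametrization. Then the curve $\gamma_\alpha$ of Proposition~\ref{prop} has $\gamma_\alpha([0,\epsilon))\subset M_i\cup\{\alpha\}\subset N$ for small $\epsilon$, so $\alpha$ is strong regular. This holds for every accumulation point, which yields no weak essential singularities, and the peak-set conclusion follows from Theorem~\ref{Character}, or directly from Theorem~\ref{ThmF} since $\overline{M_i}$ is locally contained in such arcs.
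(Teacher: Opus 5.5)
Your proof is correct, but it runs in the opposite logical order from the paper's.

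\textbf{The paper's route.} The paper obtains Theorem~\ref{n=2 characterization zero set graph} as an immediate combination of earlier results. For $n=2$ every non-point stratum has dimension $n-1=1$, so Proposition~\ref{prop graph} gives that each $\overline{M_i}$ is a peak set for $A^\infty(\mathbb B_2)$. The converse half of Theorem~\ref{Character} then excludes essential singularities; that half rests on Theorem~\ref{peak contained locally} and on the fact that compact subsets of peak sets are peak sets. Finally, for $n=2$ essential and weak essential singularities coincide.

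\textbf{Your route.} You instead prove strong regularity of every accumulation point directly, and only then obtain the peak-set property from the easy direction (Theorem~\ref{ThmF}). The mechanism is the one inside the proof of Proposition~\ref{prop graph}: the zero set of the nonnegative strictly subharmonic function $\rho=|z_1|^2+|a(z_1)|^2-1$ lies in a real analytic curve, and the identity principle pushes the whole curve into $\mathcal Z(p)\cap\mathbb S_2$. You differ from the paper in two places:
\begin{itemize}
\item you replace the appeal to Harvey--Wells and the semi-algebraic decomposition by an elementary implicit function theorem argument for $\partial_v\rho$;
\item you check complex tangency by hand through $\partial_{z_1}\rho=0$, rather than via (Z)/(PI)-set theory.
\end{itemize}

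\textbf{Comparison.} The paper's route is shorter given the results already established. Yours is self-contained and avoids the necessity theorem for $A^\infty$ peak sets. It also gives a sharper local picture: near each non-isolated boundary zero, $\mathcal Z(p)\cap\mathbb S_2$ coincides with a single real analytic complex tangential arc.

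\textbf{Two small points to tidy.}
\begin{itemize}
\item The unitary change to $\alpha=(0,1)$ is unnecessary, since your argument works verbatim near any $\alpha_1$. If you keep it, note that the graph form over $z_1$ survives: the complex curve $\mathcal Z(p)$ avoids the ball, so it must be tangent to $T^{\CC}_\alpha\mathbb S_2$ at $\alpha$.
\item The inclusion $M_i\cup\{\alpha\}\subset N$ should be asserted only in a neighbourhood of $\alpha$.
\end{itemize}
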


Turning to the case $\dim_\RR\mathcal{Z}(p)\cap \SSS\geq2,$ we obtain a countable covering satisfying probably the best properties that could be expected. The difficulty is that it is not clear for the moment whether such coverings can be applied to the cyclicity problem. 

Recall that a semi-algebraic set $A\subset \RR^N$ is called semi-algebraically path connected if for each $x,y\in A,$ there exists a semi-algebraic mapping $\varphi:[0,1]\rightarrow A$ such that $\varphi(0)=x$ and $\varphi(1)=y.$ In particular, a semi-algebraic set $A\subset \RR^N$ is  semi-algebraically path connected if and only if it is connected, see \cite[Theorem 2.4.5, Proposition 2.5.13]{Algebraic}.

\begin{theorem}\label{path conn peak sets}
Let $p\in\CC[z_1,\dots,z_n]$ be a stable polynomial with $\mathcal{Z}(p)\cap \SSS=\cup_iM_i.$ Assume that $\dim_\RR M_i\geq 2,$ the boundary of the component $M_i$ contains one and only one point which is strong regular and there exists $\delta^*>0$ such that $(\CC^n\setminus \Ba(\alpha,\delta))\cap M_i$ is connected for all $\delta<\delta^*$. Then $\overline{M_i}=\cup_j K_j,$ where $\alpha\in K_j\subset \overline{M_i}$ form an increasing sequence of semi-algebraically connected compact peak sets for $A^\infty(\BB).$ In particular, each $K_j$ is contained in two distinct $C^\infty$ complex tangential submanifolds $N_j$ and $M_j$ such that $\dim_\RR N_j=1$ and $\dim_\RR M_j=\dim_\RR M_i.$
\end{theorem}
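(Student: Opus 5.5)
We will build the sets $K_j$ by exhausting $\overline{M_i}$ from a neighbourhood of the distinguished boundary point $\alpha$, and then apply Lemma~\ref{FornLemma} to each piece. Since $\alpha$ is the unique point of $\overline{M_i}\setminus M_i$ and it is strong regular, there is a Nash curve $\gamma_\alpha$ with the properties of Proposition~\ref{prop}. There is also a one dimensional $C^\infty$ complex tangential submanifold $N\subset\SSS$ with $\gamma_\alpha([0,\epsilon))\subset N$. For small $\delta$ we set $N_\delta:=N\cap \Ba(\alpha,\delta)$, which is an open arc through $\alpha$. We also set
$$K_j:=\bigl(\overline{M_i}\setminus \Ba(\alpha,\delta_j)\bigr)\cup \gamma_\alpha([0,t_j]),$$
where $\delta_j\downarrow 0$ with $\delta_j<\delta^*$, and $t_j\downarrow 0$ is chosen so that $\gamma_\alpha(t_j)$ lies in $M_i\setminus\Ba(\alpha,\delta_j)$. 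Here one uses property $(i)$ of Proposition~\ref{prop} and that $\gamma_\alpha(t)\to\alpha$. After passing to a subsequence and arranging $\gamma_\alpha([0,t_j])\subset\gamma_\alpha([0,t_{j+1}])\cup\cdots$, the family becomes increasing (for instance, also add the arcs $\gamma_\alpha([t_{j+1},t_j])$). Since $\bigcup_j K_j\supset M_i\cup\{\alpha\}=\overline{M_i}$, the sets exhaust $\overline{M_i}$, and each $K_j$ is compact and contains $\alpha$. Each $K_j$ is semi-algebraic, because balls, $\overline{M_i}$ and images of Nash curves are semi-algebraic. It is also connected: $(\CC^n\setminus\Ba(\alpha,\delta_j))\cap M_i$ is connected by hypothesis, its closure inside $\overline{M_i}$ stays connected, and the arc $\gamma_\alpha([0,t_j])$ is connected and meets it. By the equivalence of connectedness and semi-algebraic path connectedness \cite[Theorem 2.4.5]{Algebraic}, each $K_j$ is semi-algebraically connected.

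For the peak property we take $M_j:=M_i$ and $N_j:=N\cap \Ba(\alpha,\eta_j)$ for a suitable small $\eta_j$. The set $M_i$ is a $C^\infty$ (real analytic) complex tangential manifold of dimension $\dim_\RR M_i\ge 2$ by Theorem~\ref{zeroset}, and $N_j$ is a one dimensional $C^\infty$ complex tangential manifold, so $\dim_\RR N_j<\dim_\RR M_j$. We then check the hypotheses of Lemma~\ref{FornLemma}.
\begin{enumerate}
\item[(a)] $K_j\subset N_j\cup M_i$. This holds because every point of $K_j$ other than $\alpha$ lies in $M_i$ (including $\gamma_\alpha((0,t_j])\subset M_i$), and $\alpha\in N_j$.
\item[(b)] $N_j\cap M_i$ is open in $N_j$. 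We choose $\eta_j$ so that $N_j\setminus\{\alpha\}$ meets $M_i$ only along the arc $\gamma_\alpha((0,\cdot))$. This uses property $(iv)$ of Proposition~\ref{prop}: near $\alpha$, $N$ splits into two branches, namely $\gamma_\alpha((0,\epsilon))$ and a second branch. The intersection of $M_i$ with the second branch is a semi-algebraic subset of a curve, so after shrinking it is either empty or the whole branch minus $\alpha$. In both cases $N_j\cap M_i$ is $N_j\setminus\{\alpha\}$ or one open half-arc, and hence open in $N_j$.
\item[(c)] $K_j\cap M_i=K_j\setminus\{\alpha\}$ is open in $K_j$.
\item[(d)] $K_j\cap N_j$ is open in $K_j$. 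This needs a neighbourhood of $\alpha$ in $K_j$ to lie in $N_j$. Points of $K_j$ near $\alpha$ come only from $\gamma_\alpha([0,t_j])$, since the rest of $K_j$ avoids $\Ba(\alpha,\delta_j)$, and these points lie in $N$. It remains to arrange that the part of $K_j$ in $N_j$ is relatively open, which follows by shrinking $\eta_j$ below $\delta_j$.
\end{enumerate}
Lemma~\ref{FornLemma} then shows that $K_j$ is a peak set for $A^\infty(\BB)$.

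The main obstacle is step (d), together with the choice of $K_j$ near $\alpha$. We must guarantee that near $\alpha$ the set $K_j$ is exactly a piece of the single strong regular arc. This is why we cut $\overline{M_i}$ by the ball and reattach only $\gamma_\alpha([0,t_j])$, instead of taking $\overline{M_i}\cap\{\|z-\alpha\|\ge\delta_j\}$ alone. The delicate point is that $\gamma_\alpha((0,t_j])$ must reconnect to the outer part of $M_i$. If the Nash curve leaves $\Ba(\alpha,\delta_j)$ only inside $M_i$, this holds by property $(i)$. Otherwise we would first reparametrize $\gamma_\alpha$ and use the Wing Lemma (Theorem~\ref{Wing lemma}) to select $\gamma_\alpha$ so that its exit point lies in $M_i$, which is possible because $\alpha\in\overline{M_i}$ and $\gamma_\alpha((0,\epsilon))\subset M_i$.
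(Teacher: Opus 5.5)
Your sets $K_j$ (the outer part of $\overline{M_i}$ glued to an initial arc of $\gamma_\alpha$) are essentially the paper's. Your checks of compactness, connectedness, exhaustion and of conditions (a), (c), (d) go through; the problem is step (b).

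Because you take $M_j:=M_i$ in its entirety, you must know how $M_i$ meets the second branch of $N$ near $\alpha$. You settle this by saying that $M_i\cap N$ is ``a semi-algebraic subset of a curve''. But strong regularity only provides a $C^\infty$ one dimensional complex tangential manifold $N\supset\gamma_\alpha([0,\epsilon))$. Its second branch is an arbitrary smooth curve, not a Nash curve, so $M_i\cap N$ need not be semi-algebraic, and there is no finiteness of components to exploit. Nothing in the hypotheses prevents that branch from touching $M_i$ at isolated points accumulating at $\alpha$; complex tangential curves are flexible and flat perturbations are allowed. In that case $N_j\cap M_i$ is not open in $N_j$ for any $\eta_j$, and Lemma~\ref{FornLemma} cannot be applied.

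The paper avoids this by shrinking $M_i$ instead of analysing $N\cap M_i$. With $\lambda_j<\theta_j<\delta_j$, it takes $M_j:=M_i\setminus\overline{\Ba(\alpha,\lambda_j)}$. It lets $N_j$ be the arc of $N$ made of $\gamma_\alpha([0,\epsilon))\cap\Ba(\alpha,\theta_j)$ together with a piece of the other branch short enough to lie in $\Ba(\alpha,\lambda_j)$. Then:
\begin{itemize}
\item $N_j\cap M_j$ is exactly the part of $\gamma_\alpha$ in the open annulus $\Ba(\alpha,\theta_j)\setminus\overline{\Ba(\alpha,\lambda_j)}$, which is open in $N_j$ whatever the other branch does;
\item $K_j\cap N_j=K_j\cap\Ba(\alpha,\theta_j)$ and $K_j\cap M_j=K_j\setminus\overline{\Ba(\alpha,\lambda_j)}$ are open in $K_j$;
\item $K_j\subset N_j\cup M_j$ because $\lambda_j<\theta_j$.
\end{itemize}
Alternatively, you could rescue your version by proving the following: a semi-algebraic half-branch lying in a $C^\infty$ curve has a Puiseux expansion with only integer exponents. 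It therefore continues to a Nash arc in $\mathcal{Z}(p)\cap\SSS$ that can serve as $N$. That argument is not in your proposal.

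Two smaller points. The increasing property is cleaner if you take $t_j$ to be the first exit time of $\gamma_\alpha$ from $\Ba(\alpha,\delta_j)$. Then note that $\|\gamma_\alpha(t)-\alpha\|$ is increasing for small $t>0$, by real analyticity and property $(vi)$. Also, the Wing Lemma detour in your last paragraph is unnecessary, since property $(i)$ already gives $\gamma_\alpha((0,\epsilon))\subset M_i$.
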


On the other half, we obtain the following.

\begin{theorem}\label{path conn peak sets2}
Let $p\in\CC[z_1,\dots,z_n]$ be a stable polynomial with $\mathcal{Z}(p)\cap \SSS=\cup_iM_i.$ Assume that $\dim_\RR M_i\geq 2,$ for all $i,$ and the relative boundary of each component $M_i$ contains one and only one point. If there exists a semi-algebraically path connected set  $K\subset \overline{M_i},$ which is a compact peak sets for $A^\infty(\BB)$ and $K\cap M_i\neq \emptyset,$ then $\alpha\in \overline{M_i}\setminus M_i$ is a regular point.
\end{theorem}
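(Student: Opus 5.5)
The approach is to combine the peak-set characterization (Theorem~\ref{peak contained locally} together with Theorem~\ref{ThmF}) with the Wing Lemma (Theorem~\ref{Wing lemma}), in order to produce the curve and manifold required by the definition of a regular point. Let $\alpha$ be the unique point of $\overline{M_i}\setminus M_i$. The first step is to show that $\alpha\in K$. Since $K\cap M_i\neq\emptyset$, $K\subset\overline{M_i}=M_i\cup\{\alpha\}$, and $K$ is semi-algebraically path connected and compact, one checks that $K$ must reach $\alpha$. If it does not, I would work with the fact that $K$ is a compact subset of $M_i$ and then pick a different compact peak set. A cleaner reduction is to note that any compact subset of $M_i$ is already a peak set by Theorem~\ref{RudinThm}. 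So the hypothesis only has content when $\alpha\in K$, and I would argue (or interpret the statement so) that this is the relevant case. Concretely, I would take a semi-algebraic path $\varphi:[0,1]\to K$ with $\varphi(0)\in K\cap M_i$ and $\varphi(1)=\alpha$.

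The second step is to apply Theorem~\ref{peak contained locally} to $K$. This gives an open neighbourhood $U$ of $\alpha$ and a $C^\infty$ complex tangential manifold $N\subset\SSS$ of dimension $n-1$ with $K\cap U\subset N$.

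The third step is to produce the curve $\gamma_\alpha$ from Proposition~\ref{prop}, with $\gamma_\alpha((0,\epsilon))$ inside $K\cap M_i$ rather than merely inside $M_i$. For this I would apply the Nash curve selection, i.e.\ Theorem~\ref{Wing lemma} with $Y=\{\alpha\}$ and $S$ the semi-algebraic set $\varphi([0,1])\cap M_i$, which accumulates at $\alpha$. This yields a Nash half-branch $w:(-1,1)\to\RR^{2n}$ with $w(0)=\alpha$ and $w((0,1))\subset\varphi([0,1])\cap M_i\subset K$. I would then run the argument of Proposition~\ref{prop} (Puiseux-type expansion, complex tangentiality, property (vi)) on this particular branch. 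That argument only uses that the branch lies in $\mathcal{Z}(p)\cap\SSS$ and emanates from $\alpha$ into $M_i$, so the resulting $\gamma_\alpha$ satisfies (i)--(vi) and, after shrinking $\epsilon$, $\gamma_\alpha([0,\epsilon))\subset K\cap U\subset N$. This is exactly the definition of $\alpha$ being regular.

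The main obstacle is this third step. Proposition~\ref{prop} is stated for a one-dimensional picture, while here $\dim_\RR M_i\geq2$, so I must make sure the curve supplied by Proposition~\ref{prop} can be chosen to be the selected branch inside $K$, and not some other branch of $M_i$ that need not lie in $N$. I would handle this by reproving the relevant parts directly for the Nash arc $w$:
\begin{itemize}
\item reparametrize $w$ so that it is a homeomorphism with nonvanishing derivative for $t\neq0$, as in the wing properties;
\item derive complex tangentiality from $\|w\|=1$ together with $p\circ w=0$;
\item obtain the leading-order expansion (vi) from $\|w(t)\|^2=1$;
\item extend to negative $t$ by the reflection construction used in the text before Definition~\ref{def}, noting that the extension only needs to lie in $\mathcal{Z}(p)\cap\SSS$ for the properties of Proposition~\ref{prop}, while the definition of a regular point only requires containment in $N$ for $t\in[0,\epsilon)$.
\end{itemize}
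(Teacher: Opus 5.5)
Your argument is essentially the paper's. You take a semi-algebraic path in $K$ from a point of $M_i$ to $\alpha$ and select a Nash half-branch of it at $\alpha$. You apply the Wing Lemma directly to $\varphi([0,1])\cap M_i$, whereas the paper first decomposes $\varphi([0,1])$ via Proposition 2.9.10 of Bochnak--Coste--Roy and then invokes the argument of Proposition~\ref{prop}. You then rerun the proof of Proposition~\ref{prop} on that branch so that $\gamma_\alpha([0,\epsilon))\subset K$, and conclude with Theorem~\ref{peak contained locally}. Applying Theorem~\ref{peak contained locally} directly to $K$, as you do, is slightly more economical. The paper instead first passes to the compact subset $\gamma_\alpha([0,\epsilon])$ using Chaumat--Chollet's result that compact subsets of peak sets are peak sets.

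Three steps need correcting.

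First, $\alpha\in K$ does not follow from the hypotheses. With $m=\dim_\RR M_i$, the set $\phi_i([-\tfrac12,\tfrac12]^m)$ is a compact, semi-algebraically connected subset of $M_i$, and it is a peak set by Theorem~\ref{RudinThm}. So the sentence ``one checks that $K$ must reach $\alpha$'' is false, and the fallback ``pick a different compact peak set'' goes nowhere. The condition $\alpha\in K$ has to be read into the statement; the paper's proof does exactly this when it opens with ``Since $\alpha\in K$''.

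Second, do not extend to $t<0$ by the reflection $(-\gamma_1,\dots,-\gamma_{n-1},\gamma_n)$. That curve need not lie in $\mathcal{Z}(p)$, so it does not give a map into $\mathcal{Z}(p)\cap\SSS$, which Proposition~\ref{prop} and the definition of a regular point require. No extension is needed: the wing is already defined on $(-1,1)$. Since $p\circ w$ and $\|w\|^2-1$ are real analytic and vanish on $(0,1)$, they vanish on all of $(-1,1)$. Property (iii) then follows by shrinking $\epsilon$, as in the proof of Proposition~\ref{prop}.

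Third, complex tangentiality is not obtained from $\|w\|=1$ and $p\circ w=0$ alone; it uses the stability of $p$. You can take it from the (Z)-set argument in the proof of Proposition~\ref{prop}. More simply, use $w((0,1))\subset M_i$ with $M_i$ complex tangential (Theorem~\ref{zeroset}) together with real analyticity of $\langle w'(t),w(t)\rangle$. Property (vi) likewise needs the full condition $\langle w',w\rangle=0$, because $\|w\|=1$ alone leaves the imaginary parts of $a_\nu,\dots,a_{2\nu-1}$ undetermined.
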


The above established theory leads to a characterization of cyclicity for polynomials having no weak essential singularities on the Dirichlet-type space $\DDD_{n-\frac{1}{2}}(\BB),$ see \cite{VZ} for the  full conjectured characterization.

\begin{theorem}\label{main2}
Let $p\in\CC[z_1,\dots,z_n]$ be a stable polynomial. Assume that $\dim_\RR\mathcal{Z}(p)\cap \SSS=1$ and it has no weak essential singularities. Then $p$ is cyclic in $\DDD_{\alpha}(\BB)$ if and only if $\alpha\leq n-\frac{1}{2}.$
\end{theorem}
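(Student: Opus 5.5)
The proof has two directions: sufficiency for $\alpha\le n-\frac12$ by covering the boundary zeros with cyclic peak functions and comparing, and necessity for $\alpha>n-\frac12$ by the non-cyclicity criterion.

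For sufficiency, apply Theorem~\ref{theoremm}. Since $\dim_\RR\mathcal{Z}(p)\cap\SSS=1$ and there are no weak essential singularities, it gives finitely many compact peak sets $K_1,\dots,K_N$ for $A^\infty(\BB)$ with $\mathcal{Z}(p)\cap\SSS=\cup_jK_j$. Each $K_j$ lies in a one-dimensional $C^\infty$ complex tangential submanifold of $\SSS$. Lemma~\ref{LemmaPuri} with $m=1$ then gives, for each $j$, a peak function $g_j\in A^\infty(\BB)$ with $\mathcal{Z}(g_j)\cap\overline{\BB}=K_j$ that is cyclic in $\DDD_\alpha(\BB)$ for every $\alpha\le\frac{2n-1}{2}=n-\frac12$. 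Here one should take the zero form of the peak function, $\mathrm{Re}\, g_j>0$ off $K_j$, so that $g_j$ vanishes exactly on $K_j$.

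Set $g:=g_1\cdots g_N$. Then:
\begin{itemize}
\item $g$ is zero-free in $\BB$ and $\mathcal{Z}(g)\cap\SSS=\cup_jK_j\supset\mathcal{Z}(p)\cap\SSS$;
\item $g\in A^\infty(\BB)$, so it is Lipschitz of order $1$ on $\overline{\BB}$ and is a multiplier of $\DDD_\alpha(\BB)$;
\item $p$ is a polynomial, hence holomorphic near $\overline{\BB}$ and a multiplier.
\end{itemize}

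The multiplier claim for $A^\infty$ functions is standard, since smooth-up-to-the-boundary functions multiply every $\DDD_\alpha$; I would cite it, or use that functions in $A^k(\BB)$ with $k$ large are multipliers.

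The main obstacle is the cyclicity of the product $g$. A product of finitely many cyclic multipliers is cyclic. If $q_kg_1\to1$ in norm, then $q_kg_1g_2\to g_2$, because $g_2$ is a multiplier; this puts $g_2\in[g_1g_2]$, and since $[g_2]$ is the whole space, so is $[g_1g_2]$. Induction on $N$ finishes. Theorem~\ref{Perf} with $f=p$ then shows $p$ is cyclic in $\DDD_\alpha(\BB)$ for all $\alpha\le n-\frac12$. Alternatively, one could avoid the product and use a single peak function for $\cup_jK_j$; but unions of peak sets need not be peak sets (Sibony's example), so the product route is safer.

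For necessity, take $\alpha>n-\frac12$. Since $\dim_\RR\mathcal{Z}(p)\cap\SSS=1$, this set contains a one-dimensional real submanifold, namely any one-dimensional Nash stratum $M_i$, and no submanifold of higher dimension. Theorem~\ref{non-cyclicity} with $m=1$ then gives that $p$ is not cyclic for $\alpha>\frac{2n-1}{2}$. This half is routine.
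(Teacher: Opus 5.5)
Your proposal is correct and follows essentially the same route as the paper. You cover the boundary zeros via Theorem~\ref{theoremm}, obtain cyclic $A^\infty(\BB)$ peak functions from Lemma~\ref{LemmaPuri}, multiply them and apply Theorem~\ref{Perf}, and get necessity from Theorem~\ref{non-cyclicity}. The only cosmetic differences are these: the paper treats isolated boundary zeros separately with $(1-z_1)\circ U$ for a unitary $U$ and checks only $\alpha=n-\frac12$, whereas you absorb points into the $m=1$ case of Lemma~\ref{LemmaPuri} and argue directly for every $\alpha\le n-\frac12$, which is equally valid.
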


Last, following the procedure of the proofs of Proposition~\ref{prop graph}, Theorem~\ref{main2} and a compactness argument, we obtain a  characterization of cyclicity for certain polynomials. 

\begin{theorem}\label{cyclicity graph}
Let $p\in\CC[z_1,\dots,z_n]$ be a stable polynomial with $\mathcal{Z}(p)\cap \SSS=\cup_iM_i$ and $\dim_\RR \mathcal{Z}(p)\cap \SSS=n-1.$ Assume that for each  $\alpha\in \mathcal{Z}(p)\cap \SSS,$ there exists a polydisk $\mathrm{P}(\alpha,\delta)$ such that $p(z)=h(z)(z_n-a(z')),$ where $h\in \mathrm{Hol}(\mathrm{P}(\alpha,\delta))$ is zero-free and $a\in\mathrm{Hol}(\Pol(\alpha',\delta)).$ Moreover, assume that each component $M_i$ which is not an isolated point with respect to $\mathcal{Z}(p)\cap \SSS$, is contained in some $\overline{M_{j}},$ where $ \dim_\RR M_j=n-1.$ Then $p$ is cyclic in $\DDD_\alpha(\BB)$ if and only if $\alpha\leq \frac{n+1}{2}.$  
\end{theorem}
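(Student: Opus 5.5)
The proof splits into a sufficiency direction, handled by Theorem~\ref{Perf}, and a necessity direction, handled by Theorem~\ref{non-cyclicity}.

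\emph{Necessity.} By hypothesis $\dim_\RR \mathcal{Z}(p)\cap\SSS=n-1$, so the boundary zero set contains a real submanifold of dimension $n-1$ and none of higher dimension; by Theorem~\ref{zeroset} the components have dimension at most $n-1$. Theorem~\ref{non-cyclicity} with $m=n-1$ then shows that $p$ is not cyclic in $\DDD_\alpha(\BB)$ for $\alpha>\frac{2n-(n-1)}{2}=\frac{n+1}{2}$.

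\emph{Sufficiency.} Fix $\alpha\leq\frac{n+1}{2}$. The plan is to cover $\mathcal{Z}(p)\cap\SSS$ by finitely many peak sets for $A^\infty(\BB)$, each lying in a single $(n-1)$-dimensional $C^\infty$ complex tangential submanifold of $\SSS$.

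\begin{enumerate}
\item Partition the components into three groups: the $(n-1)$-dimensional ones, the lower dimensional ones that are not isolated, and the isolated points. Each non-isolated lower dimensional $M_i$ lies in some $\overline{M_j}$ with $\dim_\RR M_j=n-1$, so $\mathcal{Z}(p)\cap\SSS$ is the union of the closures $\overline{M_j}$ with $\dim_\RR M_j=n-1$ together with finitely many isolated points.
\item By Proposition~\ref{prop graph}, each such $\overline{M_j}$ is a peak set for $A^\infty(\BB)$. More precisely, the proof of that proposition gives, near each $\zeta\in\overline{M_j}$, a real analytic totally real complex tangential submanifold $N_\zeta\subset\SSS$ of dimension $n-1$ containing $\overline{M_j}\cap U_\zeta$ for a neighbourhood $U_\zeta$. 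This comes from the graph structure $z_n=a(z')$ and stability: $|z'|^2+|a(z')|^2\geq1$ with equality on the zero set.
\item Lemma~\ref{LemmaPuri} needs each set to lie in one global $C^\infty$ complex tangential manifold, not just locally. By compactness, choose finitely many $\zeta_1,\dots,\zeta_r$ and shrink to closed neighbourhoods $\overline{V_k}\subset U_{\zeta_k}$ so that the sets $K_{j,k}:=\overline{M_j}\cap\overline{V_k}$ cover $\overline{M_j}$. Each $K_{j,k}$ is compact and contained in the single manifold $N_{\zeta_k}$, so it is a peak set by Theorem~\ref{RudinThm}.
\item Lemma~\ref{LemmaPuri} with $m=n-1$ then gives a peak function $g_{j,k}\in A^\infty(\BB)$ vanishing exactly on $K_{j,k}$ and cyclic in $\DDD_\alpha(\BB)$ for $\alpha\leq\frac{n+1}{2}$. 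For an isolated point $\zeta$, a peak function such as $1-\langle z,\zeta\rangle$ works; it is cyclic since its zero set is a point, which is contained in a $0$-dimensional, hence $(n-1)$-dimensional, manifold.
\item Set $g=\prod g_{j,k}\cdot\prod(1-\langle z,\zeta\rangle)$. Then $g$ is cyclic, as a finite product of cyclic multipliers in the smooth class; this uses the standard fact that products of cyclic multipliers are cyclic. It is also Lipschitz, since it lies in $A^\infty(\BB)$, zero-free in $\BB$, and $\mathcal{Z}(p)\cap\SSS\subset\mathcal{Z}(g)\cap\SSS$. Finally, $p$ is a polynomial and hence a multiplier holomorphic near $\overline{\BB}$, so Theorem~\ref{Perf} yields cyclicity of $p$.
\end{enumerate}

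The main obstacle is step 2. It requires checking that the local manifolds from Proposition~\ref{prop graph} are genuinely complex tangential and $C^\infty$ of dimension $n-1$ at points of $\overline{M_j}\setminus M_j$, where lower dimensional strata attach. I would argue that the set $\{z':|z'|^2+|a(z')|^2=1\}$ is the minimum locus of a real analytic function that is $\geq1$. Its Hessian gives a real analytic totally real manifold containing the local zero set, and complex tangency follows by differentiating $p(\gamma(t))=0$ along curves, as in Theorem~\ref{zeroset}. A secondary point is the product step, which could alternatively be avoided by applying Theorem~\ref{Perf} to a single peak function for the union, obtained from Lemma~\ref{FornLemma}.
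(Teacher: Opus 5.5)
Your proposal is correct and follows essentially the route the paper indicates: necessity from Theorem~\ref{non-cyclicity} with $m=n-1$, and sufficiency by taking the local $(n-1)$-dimensional complex tangential manifolds from the proof of Proposition~\ref{prop graph}, using compactness to get finitely many compact pieces each lying in one such manifold, and then running the argument of Theorem~\ref{main2} (Lemma~\ref{LemmaPuri}, a product of cyclic multipliers, and Theorem~\ref{Perf}). One small tidy-up is needed at isolated points: Lemma~\ref{LemmaPuri} only guarantees \emph{some} cyclic peak function, so either use that function directly in the product, or transfer cyclicity to $1-\langle z,\zeta\rangle$ via Theorem~\ref{Perf} (or cite the known cyclicity of $1-z_1$ for $\alpha\le n$, as the paper does), rather than arguing through a ``$0$-dimensional, hence $(n-1)$-dimensional'' manifold.
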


\section{Proofs}\label{sec:proofs}
\subsection{Proof of Proposition~\ref{prop}}
\begin{proof}
Let $\alpha\in \overline{M_i}\setminus M_i$. Applying Wing Lemma for the semi-algebraic set $S=M_i$ and $Y=\{\alpha\}$ (see also \cite[0.2. Lemme]{Kurdyka1}), we deduce that there exists a Nash wing $\gamma:=\gamma_\alpha:(-1,1)\rightarrow \CC^n$  which is homeomorphism onto its image and enjoys the following properties: $\gamma((0,1))\subset M_i;$ $\gamma(0)=\alpha=(0',1);$ $\gamma'(t)\neq 0,$ for all $t\neq 0;$ there exists $\nu=2,3,..,$ such that $\gamma^{(j)}(0)=0,$ for all $j=1,2,...,\nu-1,$ and $\gamma^{(\nu)}(0)\neq 0.$

Identity principle ensures that $\gamma_\alpha((-1,1))\subset \mathcal{Z}(p)\cap \SSS.$

Let $0<\epsilon<1$ and consider the restriction $\gamma:[-\epsilon,\epsilon]\rightarrow\SSS.$  Note that $\gamma([-\epsilon,\epsilon])\subset\mathcal{Z}(p)\cap \SSS,$ namely, it is contained in a (Z)-set, and it is also compact. Whence, $\gamma([-\epsilon,\epsilon])$ is a (Z)-set too, see Remark~\ref{equivalence-Z-PI}. Thus, $\gamma([-\epsilon,\epsilon])$ is a (PI)-set, see Remark~\ref{equivalence-Z-PI}, that is, $\gamma:[-\epsilon,\epsilon]\rightarrow \mathcal{Z}(p)\cap\SSS$ is a complex tangential curve, see \cite[Theorems 10.5.4, 11.2.5]{Rudin}.

Recall that $\gamma$ is a Nash homeomorphism, thereby yielding a connected Nash curve without self intersections. The germs $\gamma((-\epsilon,0))$ and $\gamma((0,\epsilon))$ are connected semi-algebraic sets lying in $\mathcal{Z}(p)\cap \SSS$ and $\gamma((-\epsilon,0))\cap \gamma((0,\epsilon))=\emptyset.$ For any $k,$ the set $\gamma((-\epsilon,0))\cap M_k$ is also a semi-algebraic set, and thus, it admits a finite decomposition of semi-algebraic connected components.  Whence, shrinking $\epsilon>0$ if necessary, there exists unique $k$ such that $\gamma((-\epsilon,0))\subset M_k$ (it might be $k=i$).

We follow the Remark 9.5.8 (Subsections 9.3 and 9.5) of \cite{Algebraic} concerning the half branches and local germs of a Nash curve. Shrinking $\epsilon>0$ if necessary, there exists $\delta_0>0$ such that $[\gamma([-\epsilon,\epsilon])\cap \Ba(\alpha,\delta)]\setminus\{\alpha\}=\cup_i(C_i\cap\Ba(\alpha,\delta)),$ for all $\delta<\delta_0$, where $C_i$ and $C_i\cap\Ba(\alpha,\delta)$ are pairwise disjoint finitely many semi-algebraically connected germs meeting at the point $\alpha.$ The germs $C_i$ do not depend on $\delta<\delta_0.$ In particular, $C_i=\gamma((t_i,s_i)),$ for fixed $t_i,s_i\in [-\epsilon,\epsilon].$ The closures of the components meet at the point $\alpha$ and the Nash function is a homeomorphism, hence, at least one of $t_i$ and $s_i$ should be in $\{0,-\epsilon,\epsilon\}.$ Since $C_i$ are connected and pairwise disjoint, it turns out that there exist at most four components.  Therefore, there exists $\epsilon>0$ such that $\gamma((-\epsilon,\epsilon))\cap \Ba(0,\delta)$ has exactly two connected components $\gamma((-\epsilon,0))\cap \Ba(0,\delta)$ and $\gamma((0,\epsilon))\cap \Ba(0,\delta)$ meeting at $\alpha,$ for all $\delta<\delta_0.$ 

Since $\gamma$ is a real analytic function and $\gamma(0)=(0',1)$, we may assume that its coordinates may be expanded as follows
$$\gamma_i(t)=\sum_{k=1}^{+\infty}b^i_kt^k, \quad i=1,2,...,n-1,$$
and
$$\gamma_n(t)=1+\sum_{k=0}^{+\infty}a_kt^k,$$
for all $t\in (-\epsilon,\epsilon).$

The fact that $\gamma^{(j)}(0)=0,$ for all $j=1,2,...,\nu-1,$ and $\gamma^{(\nu)}(0)\neq 0,$ where $\nu=2,3,...$ is fixed, yield
$$\gamma_i(t)=b^i_\nu t^\nu+o(t^\nu), \quad i=1,...,n-1,$$
and 
$$\gamma_{n}(t)=1+a_\nu t^\nu+...+a_{2\nu-1}t^{2\nu-1}+a_{2\nu}t^{2\nu}+o(t^{2\nu}),$$
where
$$|b_\nu^1|+...+|b_\nu^{n-1}|+|a_\nu|>0.$$

Furthermore,
\begin{align*}
\gamma_i'(t)\overline{\gamma_i(t)}&=[\nu b_\nu^it^{\nu-1}+o(t^{\nu-1})][\overline{b_\nu^i}t^\nu+o(t^\nu)]=\nu|b_\nu^i|^2t^{2\nu-1}+o(t^{2\nu-1}),
\end{align*}
for all $i=1,...,n-1,$ and
\begin{align*}
\gamma_n'(t)\overline{\gamma_n(t)}&=[\nu a_{\nu}t^{\nu-1}+...+(2\nu-1)a_{2\nu-1}t^{2\nu-2}+2\nu a_{2\nu}t^{2\nu-1}+o(t^{2\nu-1})][1+\overline{a_\nu}t^\nu+o(t^\nu)]\\
&=\nu a_{\nu}t^{\nu-1}+...+(2\nu-1)a_{2\nu-1}t^{2\nu-2}+2\nu a_{2\nu}t^{2\nu-1}+\nu a_{\nu-1}\overline{a_\nu}t^{2\nu-1}+o(t^{2\nu-1}).
\end{align*}
Hence, by the complex-tangential condition $\langle\gamma'(t),\gamma(t)\rangle=0,$ for all $t\in [-\epsilon,\epsilon],$ we obtain
\begin{align*}
\sum_{i=1}^{n-1}&\nu|b_\nu^i|^2t^{2\nu-1}+\\
&+\nu a_{\nu}t^{\nu-1}+...+(2\nu-1)a_{2\nu-1}t^{2\nu-2}+2\nu a_{2\nu}t^{2\nu-1}+\nu a_{\nu-1}\overline{a_\nu}t^{2\nu-1}+o(t^{2\nu-1})=0.
\end{align*}
It follows that
$$a_\nu=...=a_{2\nu-1}=0.$$
Whence,
$$ \sum_{i=1}^{n-1}\nu|b_\nu^i|^2t^{2\nu-1}+2\nu a_{2\nu}t^{2\nu-1}+o(t^{2\nu-1})=0,$$
and thus
$$a_{2\nu}=-\frac{\sum_{i=1}^{n-1}|b_\nu^i|^2}{2}<0,$$
since 
$$|b_\nu^1|+...+|b_\nu^{n-1}|+|a_\nu|=|b_\nu^1|+...+|b_\nu^{n-1}|>0.$$
Hence, the real analytic curve is classified as follows
$$\gamma(t)=\Big(b^1_\nu t^\nu+o(t^\nu), ...,b^{n-1}_\nu t^\nu+o(t^\nu), 1 -\frac{|b_\nu^1|^2+...+|b_\nu^{n-1}|^2}{2}t^{2\nu}+o(t^{2\nu})\Big),$$
for all $t\in (-\epsilon,\epsilon)$ and the claims are complete.
\end{proof}

\subsection{Proof of Theorem~\ref{Character}}
\begin{proof}
Let $\mathcal{Z}(p)\cap\SSS=\cup M_i$ be a Nash stratification. If $\overline{M_i}$ is a point, then it is also a peak set for $A^\infty(\BB).$

For the rest of $\mathcal{Z}(p)\cap \SSS,$ choose an one dimensional component $M:=M_i$ The set $M$ has exactly two points in the relative boundary. Let $\alpha\in \overline{M}.$ If $\alpha\in M,$ then there exist an open neighbourhood $U\subset \CC^n$ of $\alpha$ such that $U\cap \overline{M}$ is contained in $M.$ If $\alpha\in \overline{M}\setminus M,$ then by assumption, there exists an open neighbourhood $U\subset \CC^n$ of $\alpha$ such that $\overline{M}\cap U\subset\gamma_\alpha([0,\epsilon)),$ where $\gamma_\alpha([0,\epsilon))$ lies in a complex tangential $C^\infty$ manifold in $\SSS.$ Theorem~\ref{ThmF} shows that $\overline{M}$ is a peak set for $A^\infty(\BB).$

On the other hand, assume that each $\overline{M_i}$ is a peak set for $A^\infty(\BB).$ Suppose in contrary that there exists an essential singularity $\alpha\in \overline{M_i}\setminus M_i.$ Pick a map $\gamma_\alpha$ from Proposition~\ref{prop}. Recall that compact subsets of a peak set for $A^\infty(\BB)$ are also peak sets for $A^\infty(\BB),$ see \cite[Theorem 11]{Chaumat}. Since $\overline{M_i}$ is a peak set for $A^\infty(\BB),$ it follows that $\gamma_\alpha([0,\epsilon])$ is a peak set for $A^\infty(\BB)$ for some $\epsilon>0.$ Theorem~\ref{peak contained locally} yields that there exists a $C^\infty$ complex tangential submanifold $N$ in $\SSS$ and a neighbourhood $U\subset \CC^n$ of $\alpha$ such that $\gamma_\alpha([0,\epsilon])\cap U$ is contained in $N.$  Summing up, there exists $\delta>0$ such that $\gamma_\alpha([0,\delta))\subset N,$ from which follows that $\alpha$ is not an essential singularity for the $M_i.$ This is a contradiction and the assertion follows. 
\end{proof}

\subsection{Proof of Theorem~\ref{theoremm}}
\begin{proof}
Set $K:=\overline{M_i},$ where $K$ has exactly two points in the relative boundary (hence, not a point). Following the same arguments as in the previous proof, there exist $N_\alpha,N_\beta,$ one dimensional complex tangential $C^\infty$ submanifolds lying on $\SSS$ and $\epsilon>0$ such that $K_\alpha:=\gamma_\alpha([0,\epsilon])\subset N_\alpha$ and $K_\beta:=\gamma_\beta([0,\epsilon])\subset N_\beta.$  Both $K_\alpha$ and $K_\beta$ are peak sets for $A^\infty(\BB)$ and they are contained in one dimensional complex tangential $C^\infty$  manifolds.

Furthermore, there exists compact $K_{\alpha,\beta}\subset M_i$ (its relative interior intersects the relative interiors of $K_\alpha$ and $K_\beta$) such that $K= K_\alpha\cup K_{\alpha,\beta}\cup K_\beta.$ The set $K_{\alpha,\beta}$ is peak set for $A^\infty(\BB)$ since $M_i$ is complex tangential.

Lastly, the components $M_i$ that are isolate points compose a union of finitely many peak sets. The components that are non isolated points are contained in the closure of one dimensional components by the notion of Nash stratification. This yields the assertion.
\end{proof}

\subsection{Proof of Proposition~\ref{prop graph}}
\begin{proof}
It is enough to prove the statement for a non-zero dimensional component $M_i$ and locally around $\alpha=(0',1)\in \overline{M_i}\setminus M_i.$ Write $p$ in its Weierstrass form  $p(z)=h(z)(z_n-a(z')),$ where $\mathrm{P}(\alpha,\delta)$ is a polydisk centered at $\alpha,$ $h\in \mathrm{Hol}(\mathrm{P}(\alpha,\delta))$ is zero-free, $\alpha\in\mathrm{Hol}(\Pol(0',\delta))$ and $\alpha(0')=1.$  Then $\mathcal{Z}(p)\cap \SSS\cap \Pol(\alpha,\delta)=\{(z',a(z'))\in \CC^n:z'\in \Pol(0',\delta), ||z'||^2+|a(z')|^2=1\}.$ Consider the following function:
$$\varphi(z'):=||z'||^2+|a(z')|^2-1, \quad z'\in \Pol(0',\delta),$$
and note that it is a non-negative real analytic strictly plurisubharmonic function. Hence, there exists $\delta'>0$ and a ($n-1$)-dimensional, real analytic, totally real submanifold $M\subset \Ba(0',\delta')$ such that $\mathcal{Z}(\varphi)\cap \Ba(0',\delta')\subset M,$ see \cite{HW}. Next, both sets $\mathcal{Z}(\varphi)$ and $M$ are real analytic, thus they admit a decomposition of finitely many connected real analytic submanifolds in $\CC^n.$  There exists $\tilde{\varphi}:\Omega\rightarrow M,$ where $\Omega:=(-1,1)^{n-1},$ real analytic, diffeomorphism onto its image and such that $\tilde{\varphi}(0)=0'.$ Furthermore, the map $f(x):=(\tilde{\varphi}(x),a(\tilde{\varphi}(x)),$ $x\in \Omega$ yields a ($n-1$)-dimensional, real analytic, submanifold in $\CC^n$ and passing through $\alpha.$  We shall show that $N:=f(\Omega)$ is totally real.  Indeed, let us suppose that $w\in T_\po N\cap iT_\po N,$ $\po=f(x),$ $x\in \Omega,$ and $T_\po N=\{(\tilde{\varphi}'(x)h,(a\circ \tilde{\varphi})'(x)h):h\in \RR^{n-1}\}.$ Then there exists $h_1,h_2$ such that $\tilde{\varphi}'(x)h_1=i\tilde{\varphi}'(x)h_2.$ Set $u_1=\tilde{\varphi}'(x)h_1$ and $u_2=\tilde{\varphi}'(x)h_2.$ Since $M$ is totally real and $\tilde{\varphi}$ is diffeomorphism, we obtain $u_1=u_2=0$ and $w=0.$ Thus, $N$ is totally real. Last, if $z=(z',z_n)\in \Pol(\alpha,\delta)\cap M_i,$ then $z'\in \mathcal{Z}(\varphi)$ and $z_n=a(z').$ Thus, for $z$ close enough to $\alpha,$ we obtain that $z\in N.$ 

Next, let us suppose that $M_i$ is a $(n-1)$-dimensional component. Note that $\alpha\in N.$ We may assume that $N$ is also connected. By the arguments above, there exists an open ball $\Ba(\alpha,\epsilon)$ such that $M_i\cap\Ba(\alpha,\epsilon)\subset N.$ Note that $\dim_\RR (M_i\cap\Ba(\alpha,\epsilon))=n-1.$ We apply the decomposition of Proposition 2.9.10 of \cite{Algebraic} on the semi-algebraic set $M_i\cap\Ba(\alpha,\epsilon)$, thereby obtaining  $M_i\cap\Ba(\alpha,\epsilon)=\cup_j U_j,$ where $U_j$ are pairwise disjoint finitely many Nash submanifolds, each Nash diffeomorphic to an open hypercube $(-1,1)^m,$ $m=0,1,...,n-1.$ Note that $\alpha\in \overline{M_i\cap\Ba(\alpha,\epsilon)}=\cup_j \overline{U_j}.$ Let $J$ be the set of all indexes such that $\alpha\in \overline{U_j}.$ Since $\overline{U_j}$ are finitely many compact sets, we may shrink $\epsilon>0$ such that $M_i\cap\Ba(\alpha,\epsilon)=\cup_{j\in J}( U_j\cap \Ba(\alpha,\epsilon)).$ If $\dim_\RR U_j<n-1,$ for all $j\in J,$ then $\dim_\RR (\cup_{j\in J}(U_j\cap \Ba(\alpha,\epsilon)))=\dim_\RR (M_i\cap\Ba(\alpha,\epsilon))<n-1,$ which contradicts the assumption. Whence, there exist at least one $j$ such that $\alpha\in \overline{U_j}$ and $\dim_\RR U_j=n-1.$ Thus, we may assume that $U_j\subset N,$ and $U_j$ is open in $N.$ In particular, $U_j\subset \mathcal{Z}(p)\cap \SSS,$ and hence, the identity principle leads to $N\subset \mathcal{Z}(p)\cap \SSS.$ Summing up, $N$ is a complex tangential submanifold in $\SSS$ passing through $\alpha.$ The assertion follows.

We shall show that $M_i$ is locally contained in real analytic, totally real submanifolds lying in $\SSS.$ It is enough to show it for the point $\alpha.$

We have that $f:\Omega\rightarrow\CC^n$ is a diffeomorphism onto its image, real analytic, and $N=f(\Omega)$ is a totally real, real analytic, $(n-1)$-dimensional submanifold in $\CC^n$. We may assume that there exists a uniform constant $c>0$ such that $||f(x)||>c,$ for all $x\in \Omega.$ Set $f(x_0)=\alpha,$ for some $x_0.$ The function $F:\Omega\rightarrow \SSS$ defined by $F(x):=f(x)/||f(x)||$ is well defined, real analytic and $F(x_0)=f(x_0).$

A straightforward computation yields
$$F'(x)\cdot h=\frac{1}{||f(x)||}\Big(f'(x)\cdot h-\frac{f(x)}{||f(x)||^2}\mathrm{Re}\langle f'(x)\cdot h, f(x)\rangle\Big),$$
for all $x\in \Omega$ and $h\in \RR^{n-1}.$

We have the following dichotomy.

First, let us suppose that there exists $h_0\in \RR^{n-1}$ such that $\mathrm{Re}\langle f'(x_0)\cdot h_0, f(x_0)\rangle\neq0.$ Note that $f'(x_0)\cdot h_0\in T_{f(x_0)}N$ and recall that $T_{f(x_0)}\SSS=\{w\in \CC^n:\mathrm{Re}\langle w,f(x_0)\rangle=0\}.$ Hence, $f'(x_0)\cdot h_0\notin  T_{f(x_0)}\SSS.$ Since $\dim_\RR T_{f(x_0)}\SSS=2n-1$ ( it has codimension $1$),   we obtain that $ T_{f(x_0)}\SSS$ and $ T_{f(x_0)}N$ spann $ T_{f(x_0)}\CC^n,$ that is, $N$ and $\SSS$ intersect transversely, see Chapter 6, Section Transversality of \cite{Lee}.  Thus, $N_1:=N\cap \SSS\subset \SSS$ is a real analytic submanifold of $\CC^n$ of dimension $n-2.$  Moreover, $N\cap \SSS$ is totally real since it is contained in $N$ which is also totally real by assumtpion.

Secondly, let us suppose that $\mathrm{Re}\langle f'(x_0)\cdot h, f(x_0)\rangle=0,$ for all $h\in \RR^{n-1}.$ The condition $||f(x_0)||=1,$ yields  $F'(x_0)\cdot h=f'(x_0)\cdot h$ for all $h\in \RR^{n-1}.$ Since $f$ is diffeomorphism, $f'(x_0)$ has full rank $n-1,$ hence, $F'(x_0)$ has full rank $n-1$ too. Full rank is an open property, so it follows that there exists an open  neighbourhood $U$ of $x_0$ such that  $F'(x)$ has full rank $n-1$ for all $x\in U.$ Shrinking the open neighbourhood $U$ if necessary, we obtain that $F(U)$ is a real analytic submanifold in $\CC^n$ such that $F(U)\subset \SSS.$ Last, the equality 
$F'(x_0)\cdot h=f'(x_0)\cdot h$ for all $h\in \RR^{n-1},$ and the totally reality of $N$ yield  $T_{F(x_0)}F(U)\cap iT_{F(x_0)}F(U)=\{0\},$ hence, $F(U)$ is totally real at the point $F(x_0)$. Since totally reality is an open property there exists a smaller neighbourhood $U'$ of $x_0$ such that $N_2:=F(U')$ is a real analytic, totally real, $(n-1)$-dimensional submanifold of $\CC^n$ lying in $\SSS.$  

In both case $N_1,N_2,$ of the dichotomy, there exists $\epsilon>0$ such that $\overline{M_i}\cap \Ba(\alpha,\epsilon)$ is contained in a real analytic, totally real submanifold lying in $\SSS.$
\end{proof}

\begin{figure*}
\centering
\includegraphics[scale=0.72]{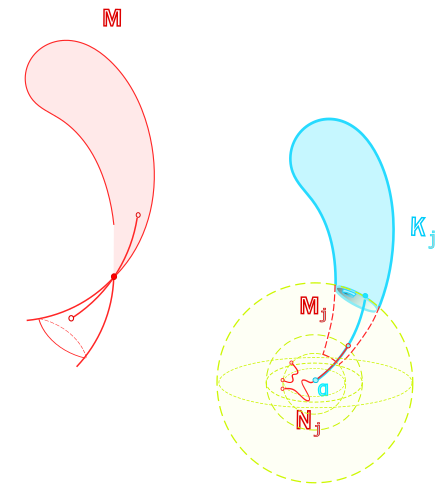}
\caption{}
\label{figure 2}
\end{figure*}

\subsection{Proof of Theorem~\ref{path conn peak sets}}
\begin{proof}
We follow Figure 2 which shows a two dimensional simple illustration. Let $M_i:=M$ and $\alpha\in \overline{M}\setminus M.$ Pick $\gamma:=\gamma_\alpha:(-\epsilon,\epsilon)\rightarrow\mathcal{Z}(p)\cap \SSS$ from Proposition~\ref{prop} which is contained in an one dimensional complex tangential $C^\infty$ submanifold in $\SSS.$

By assumption and the properties of Proposition~\ref{prop}, will exist $\epsilon>0$ and $\delta_0\in (0,1)$ such that for any open ball $\Ba(\alpha,\delta)\subset \CC^n,$ $\delta<\delta_0,$  we have that $\gamma([0,\epsilon))\cap \Ba(\alpha,\delta)$ is semi-algebraically connected,  $(\CC^n\setminus \Ba(\alpha,\delta_0))\cap \gamma([0,\epsilon))\neq \emptyset,$ and $(\CC^n\setminus \Ba(\alpha,\delta))\cap M$ is semi-algebraically connected. It follows that 
$$[\gamma([0,\epsilon))\cap \overline{\Ba(\alpha,\delta)}]\cup [(\CC^n\setminus \Ba(\alpha,\delta))\cap M]$$
is a semi-algebraically path connected set.

Pick sequences $\delta_0>\delta_1>\theta_1>\lambda_1>\delta_2>\theta_2>\lambda_2....>0$ converging to $0,$ and construct the following sequence of sets lying in $\overline{M}:$
$$K_j:=[\gamma([0,\epsilon))\cap \overline{\Ba(\alpha,\delta_j)}]\cup[(\CC^n\setminus \Ba(\alpha,\delta_j))\cap M].$$

The sequence $\{K_j\}$ increases. Indeed, if $\zeta\in K_j,$ then $\zeta\in\gamma([0,\epsilon))$ and $\zeta\in \overline{\Ba(\alpha,\delta_j)}$ or $\zeta\in\CC^n\setminus \Ba(\alpha,\delta_j)$ and $\zeta \in M$. The property $\zeta\in\CC^n\setminus \Ba(\alpha,\delta_j)$ and $\zeta \in M$ together with $\delta_j>\delta_{j+1}$ yield that $\zeta\in (\CC^n\setminus \Ba(\alpha,\delta_{j+1}))\cap M.$ On the other hand, let $\zeta\in\gamma([0,\epsilon))$ and $\zeta\in \overline{\Ba(\alpha,\delta_j)}.$ If $\zeta\in  \CC^n\setminus \Ba(\alpha,\delta_{j+1}),$ then $\zeta\in (\CC^n\setminus \Ba(\alpha,\delta_{j+1}))\cap \gamma([0,\epsilon))\subset (\CC^n\setminus \Ba(\alpha,\delta_{j+1}))\cap M,$ and hence, $\zeta\in K_{j+1}.$ If $\zeta\in \overline{\Ba(0,\delta_{j+1})},$ then $\zeta \in \gamma([0,\epsilon))\cap \overline{\Ba(0,\delta_{j+1})},$ from which follows again that $\zeta\in K_{j+1}.$ 

Each $K_j$ is a compact set. This follows from the fact that $(\CC^n\setminus \Ba(\alpha,\delta_j))\cap M=(\CC^n\setminus \Ba(\alpha,\delta_j))\cap \overline{M}$ is compact.

Furthermore, there exists a smooth map $\omega:(-\epsilon',\epsilon')\rightarrow \SSS$ such that $\omega'(t)\neq 0,$ for all $t\in (-\epsilon',\epsilon'),$ and $\gamma([0,\epsilon))\subset\omega((-\epsilon',\epsilon')).$ Pick $\epsilon>0$ such that $\omega((-\epsilon,\epsilon))\cap\Ba(\alpha,\delta)=[\omega((-\epsilon,0))\cup \gamma([0,\epsilon))]\cap \Ba(\alpha,\delta).$

We shall show that each $K_j$ is a peak set for $A^\infty(\BB)$ applying Lemma~\ref{FornLemma}. Indeed, pick $\epsilon_j>0$ such that $\omega((-\epsilon_j,0))\subset \Ba(\alpha,\lambda_j)$ and set $N_j:=[\omega((-\epsilon_j,0))\cup \gamma([0,\epsilon))]\cap \Ba(\alpha,\theta_j)$ and $M_j=M\cap \CC^n\setminus\overline{\Ba(\alpha,\lambda_j)}.$ All in all, we have that $K_j\subset \SSS$ is a compact set, $K_j\subset N_j\cup M_j$, the sets $N_j$ and $M_j$ are $C^\infty$ complex tangential manifolds in $\SSS,$  $\dim_\RR N_j<\dim_\RR M_j,$ the set $N_j\cap M$ is open in the relative topology of $N_j$ (since $N_j\cap M_j=\gamma(0,\epsilon)\cap [\Ba(\alpha,\theta_j)\setminus\overline{\Ba(\alpha,\lambda_j)}]$ which is open in $N_j$), the sets $N_j\cap K_j$ and $K_j\cap M_j$ are open in the relative topology of $K_j$ (since $\delta_j>\theta_j,$ we have $N_j\cap K_j=\Ba(\alpha,\theta_j)\cap \omega((-\epsilon,\epsilon))\cap\gamma([0,\epsilon))\cap \overline{\Ba(a,\delta_j)}\cup \emptyset=\Ba(\alpha,\theta_j)\cap\gamma([0,\epsilon))=\Ba(\alpha,\theta_j)\cap K_j$ which is relative open in $K_j,$ and $K_j\cap M_j=K_j\cap (\CC^n\setminus \overline{\Ba(\alpha,\lambda_j)})$ which is relative open in $K_j$). Lemma 4.1 yields that $K_j$ is a peak set for $A^\infty(\BB).$

\end{proof}

\subsection{Proof of Theorem~\ref{path conn peak sets2}}
\begin{proof}
Since $\alpha\in K$ and $K$ is a semi-algebraically path connected set which intersects $M_i$, there exists a semi-algebraic continuous map $\varphi:[0,1]\rightarrow K$ such that $\varphi(0)=\alpha$ and $\varphi((0,1])\subset M_i.$ Note that $\varphi([0,1])$ is an one dimensional semi-algebraic set, and hence, by the decomposition in Proposition 2.9.10 of \cite{Algebraic} and the arguments of Proposition~\ref{prop}, there exists $\gamma_\alpha:(-\epsilon,\epsilon)\rightarrow \mathcal{Z}(p)\cap \SSS$ satisfying the properties of Proposition~\ref{prop} with respect to an one dimensional component of $\varphi([0,1])$ that contains $\alpha$ in its relative boundary. Furthermore, $\gamma_\alpha([0,\epsilon])\subset K,$ namely, it is a compact subset of a peak set for $A^\infty(\BB).$ Shrinking $\epsilon>0$ if necessary,  Theorem 11 of \cite{Chaumat} and Theorem~\ref{peak contained locally} show that $\alpha$ is a regular point. 
\end{proof}

\subsection{Proof of Theorem~\ref{main2}}
\begin{proof}
By Theorem~\ref{non-cyclicity}, it is enough to identify cyclicity for $\alpha=n-\frac{1}{2}.$ By assumption, the arguments in the proof of Theorem~\ref{theoremm} and Lemma~\ref{LemmaPuri}, there exist finitely many compact sets $K_\ell\subset \SSS$ such that $\mathcal{Z}(p)\cap \SSS\subset \cup K_\ell,$ and $K_\ell$ is either an isolated point in the unit sphere with respect to $\mathcal{Z}(p)\cap \SSS,$ or there exist $g_\ell\in A^\infty(\BB)$ such that $\mathcal{Z}(g_\ell)\cap \overline{\BB}\equiv K_\ell$ and $g_\ell$ is cyclic for $D_{n-\frac{1}{2}}(\BB).$  Since $g_\ell$ are smooth in the closed unit ball, they are also multipliers in the space $\DDD_{n-\frac{1}{2}}(\BB).$ If $K_\ell$ is an isolated point then there exist a polynomial $g_\ell=\pi_\ell\circ U_\ell,$ where $\pi_\ell(z)=1-z_1,$ and $U_\ell$ is a unitary matrix such that $\mathcal{Z}(g_\ell)\cap\overline{\BB}\equiv K_\ell.$ The polynomial $g_\ell$ is cyclic for $\alpha\leq n,$ see Theorem~\ref{Perf} together with Theorem 11 of \cite{V}.

Set $g:=\prod_\ell g_\ell$ and note that $g$ is a zero-free in the unit ball smooth multiplier of $\DDD_{n-\frac{1}{2}}(\BB)$. Since product of cyclic multipliers is a cyclic function, we infer that $g$ is cyclic in $\DDD_{n-\frac{1}{2}}(\BB)$ too.

If $\zeta\in \mathcal{Z}(p)\cap \SSS,$ then $\zeta\in M_\ell,$ for some $\ell.$ Therefore, $\zeta\in K_\ell,$ and hence, $\zeta \in \mathcal{Z}(g)\cap \SSS.$ Whence, $\mathcal{Z}(p)\cap \SSS\subset \mathcal{Z}(g)\cap \SSS.$ By smoothness of $g$ we see that $g$ satisfies a Lipschitz condition of order $a > 0.$  Recall also that a polynomial is a multiplier for any Dirichlet-type space.

All the criteria of Theorem~\ref{Perf} are met, so we deduce cyclicity for $f$ in $\DDD_{n-\frac{1}{2}}(\BB).$ 
\end{proof}

\section{Further discussion}\label{sec:further}

Let us note that the real analytic map $\gamma_\alpha$ obtained in Proposition~\ref{prop} is a homeomorphism onto its image, ergo it cannot yield an one dimensional manifold with boundary.

It is possible to further classify the map $\gamma_\alpha$. As a matter of fact, straightforward computations on the complex tangential property and the real analytic expansion show that
$$a_{2\nu+k}=-\frac{\sum_{i=0}^{n-1}\Big(\sum_{l=0}^{k}(\nu+l)b_{\nu+l}^i\overline{b_{\nu+k-l}^i}\Big)}{2\nu+k}, \text{ } k=1,...,2\nu-1,$$
and
$$\sum_{i=0}^{n-1}\Big(\sum_{l=0}^{k}(\nu+l)b_{\nu+l}^i\overline{b_{\nu+k-l}^i}\Big)+(2\nu+k)a_{2\nu+k}+\sum_{l=0}^{k-2\nu}(2\nu+l)a_{2\nu+l}\overline{a_{2\nu+k-l}}=0, \text{  }k=2\nu,...$$

On the other hand, for the setting $n=2$, we have $\gamma_\alpha=(\gamma_1,1-\gamma_2)$. Hence, one may wonder whether is it possible to further classify the functions $h_j$ obtained through Weierstrass' Preparation Theorem and Puiseux's Parametrization Theorem in \cite{KV} in order to prove that $\alpha$ is not an essential singularity; this is an open
problem that merits further study in the near future. We also refer to \cite{Bickel} and \cite{KneseK} about the nature of these functions.

Let us also recall a multivariable version of Puiseux's Parametrization Theorem in $\CC^n$ by A. Parusi\'{n}ski, where the determinant of the corresponding monic polynomial of the stable polynomial is assumed to satisfy additional properties; see \cite{Parusinski}. In this specific case or under the graph assumptions of Theorem~\ref{cyclicity graph}, one may inquire about a characterization of cyclicity applying the radial dilation method appearing in \cite{KneseK,KV,V,Ziarati}.

We pose the two following questions that naturally stem from the contents of this work.

\begin{question*}[C]\phantomsection\label{Question C}
Under which additional hypotheses does a stable polynomial whose boundary zero set is a finite union of submanifolds of dimension at most one admits no  essential (resp. weak) singularities?
\end{question*}

\begin{question*}[D]\phantomsection\label{Question D}
Let $p\in \CC[z_1,...,z_n]$ be a stable polynomials and consider the partition $\mathcal{Z}(p)\cap \SSS=\cup_iM_i$ obtained from Theorem~\ref{zeroset}. To what extent does the cyclicity of $p$ in $\DDD_\alpha(\BB),$ $\alpha\in \RR,$ depend on the dimension of $\mathcal{Z}(p)\cap \SSS,$ or on the geometric nature of $M_i$?
\end{question*}

Notice that it is always possible to pick a stratification for $\mathcal{Z}(p)\cap\SSS,$ where the components may not be globally diffeomorphic to open hypercubes; see \cite[Proposition 9.1.8]{Algebraic}. So the definitions and the theory developed throughout the course of this article might merit greater generality. We focused mainly on the one dimensional boundary zeros. See also \cite{Sola1} and \cite{Knese} for information on singularities.

\section*{Acknowledgements} 
Discussions on the main ideas, in particular, on the conceptualisation of \hyperref[Question A]{Question (A)}, took place during the Bench Math Session workshop, February 2025, Krakow, Poland. The first named author would like to thank Filippo Fasoula, Voukoliki D. for enlightenments. The third named author is grateful to the camps of Avlona and Nea Santa for hosting him during the preparation of the manuscript. This work was supported by National Key R\&D Program of China, No. 2024YFA1015200 and the Natural Science Foundation of Guangdong Province, No. 2025A1515011428.

\bibliographystyle{plain}
	
\end{document}